\DeclareMathOperator{\pr}{pr}
\DeclareMathOperator{\id}{id}
\DeclareMathOperator{\Hom}{Hom}
\DeclareMathOperator{\sHom}{\mathcal{H}\textit{om}}
\DeclareMathOperator{\Ext}{Ext}
\DeclareMathOperator{\Coh}{Coh}
\DeclareMathOperator{\Ho}{H}
\DeclareMathOperator{\Hilb}{Hilb}
\DeclareMathOperator{\GHilb}{G-Hilb}
\DeclareMathOperator{\rank}{\mathsf{rank}}
\DeclareMathOperator{\cone}{cone}
\DeclareMathOperator{\Ind}{\mathsf{Ind}}
\DeclareMathOperator{\Dperf}{D^{\mathrm{perf}}}
\newcommand{\D}{{\rm D}}
\DeclareMathOperator{\Aut}{\mathrm{Aut}}
\DeclareMathOperator{\Pic}{\mathrm {Pic}}
\newcommand{\cI}{{\mathcal I}}
\newcommand{\IA}{\mathbb{A}}
\newcommand{\IC}{\mathbb{C}}
\newcommand{\IN}{\mathbb{N}}
\newcommand{\IP}{\mathbb{P}}
\newcommand{\IZ}{\mathbb{Z}}
\DeclareMathOperator{\Res}{\mathsf{Res}}
\DeclareMathOperator{\FM}{\mathsf{FM}}
\DeclareMathOperator{\TT}{\mathsf{T}\!}
\DeclareMathOperator{\MM}{\mathsf{M}}
\DeclareMathOperator{\CC}{\mathsf{C}}
\DeclareMathOperator{\WW}{\mathsf{W}}
\DeclareMathOperator{\stab}{\mathsf{stab}}
\newcommand{\sym}{\mathfrak S}
\newcommand{\cF}{\mathcal F}
\newcommand{\cE}{\mathcal E}
\newcommand{\cZ}{\mathcal Z}
\newcommand{\cD}{\mathcal D}
\newcommand{\cK}{\mathcal K}
\newcommand{\alt}{\mathfrak a}
\newcommand{\reg}{\mathcal O}
\newcommand{\eps}{\varepsilon}
\renewcommand{\theta}{\vartheta}
\renewcommand{\rho}{\varrho}
\renewcommand{\phi}{\varphi}
\renewcommand{\_}{\underline{\,\,\,\,}}
\newtheorem{theorem}{Theorem}[section]
  \newaliascnt{proposition}{theorem}
  \newtheorem{prop}[proposition]{Proposition}
  \newaliascnt{lemma}{theorem}
  \newtheorem{lemma}[lemma]{Lemma}
  \newaliascnt{corollary}{theorem}
  \newtheorem{cor}[corollary]{Corollary}
\theoremstyle{definition}
  \newaliascnt{definition}{theorem}
  \newtheorem{definition}[definition]{Definition}
  \newaliascnt{remark}{theorem}
  \newtheorem{remark}[remark]{Remark}
  \newaliascnt{condition}{theorem}
  \newaliascnt{question}{theorem}
  \newaliascnt{example}{theorem}
\begin{document}

\title[McKay correspondence for Hilbert schemes and tautological bundles]{Remarks on the derived McKay correspondence for Hilbert schemes of points and tautological bundles}
\author[A.\ Krug]{Andreas Krug}

\begin{abstract}
We study the images of tautological bundles on Hilbert schemes of points on surfaces and their wedge powers under the derived McKay correspondence. The main observation of the paper is that using a derived equivalence differing slightly from the standard one considerably simplifies both the results and their proofs. As an application, we obtain shorter proofs for known results as well as new formulae for homological invariants of tautological sheaves. In particular, we compute the extension groups between wedge powers of tautological bundles associated to line bundles on the surface.  
\end{abstract}

\maketitle

\section{Introduction}

Let $G$ be a finite group which acts on a smooth variety $M$. The McKay correspondence is a principle describing the relationship between the geometry of certain resolutions of the singularities of the quotient $M/G$ and the representation theory of $G$. Probably the most important example of the McKay correspondence in higher dimensions is the case where $M=X^n$ is a power of a smooth surface with the symmetric group $G=\mathfrak S_n$ permuting the factors. In this case, a crepant resolution of the quotient singularities is given by the Hilbert scheme $X^{[n]}$ of points on $X$ which is a fine moduli space of zero-dimensional subschemes of $X$. The McKay correspondence can then be expressed as an equivalence of derived categories $\D(X^{[n]})\cong \D_{\sym_n}(X^n)$ of ($\sym_n$-equivariant) coherent sheaves; see \cite{BKR, Hai}.

Besides being a very interesting theoretical result, the derived McKay correspondence can be used as a computational tool for the study of vector bundles, or, more generally, sheaves and complexes thereof, on the Hilbert schemes of points on surfaces. Concretely, given a vector bundle on $X^{[n]}$, the derived McKay correspondence $\D(X^{[n]})\cong \D_{\sym_n}(X^n)$ can be used to translate this bundle into a complex in $\D_{\sym_n}(X^n)$. Then, the homological invariants of the vector bundle agree with those of the associated equivariant complex but the computations are often easier for the latter.

A very interesting class of vector bundles on $X^{[n]}$ is given by the tautological bundles $E^{[n]}$. They are associated to vector bundles $E$ on the surface $X$ by means of the universal family of the Hilbert scheme; see \autoref{def:taut} for details. These bundles were intensively studied for various reasons. First of all, it seems natural to consider tautological bundles if one is interested in the geometry of Hilbert schemes of points on surfaces.
Furthermore, they have applications in the description of the cup product on the cohomology of the Hilbert scheme \cite{LehnChern, LehnSorger1, LehnSorger2}, enumerative geometry \cite{KoolShendeThomas, RennemoTaut}, and the strange duality conjecture for line bundles on moduli spaces of sheaves \cite{Dandual, MarianOpreathetatour}. Recently, they have also been considered as a source of examples of stable bundles in higher dimension; see \cite{Schlickeweitaut,Wandel1,Wandel2,Stapletontaut}.

In \cite{Sca1}, Scala began to use the derived McKay correspondence to study tautological bundles and tensor powers thereof. In particular, he explicitly computed equivariant complexes in $\D_{\sym_n}(X^n)$ corresponding to the tautological sheaves. This has been further exploited in \cite{Sca2, Scasymmetric,  KruExt, KrugTensortaut, Mea, MMdefK3}. In the present paper, we consider an equivalence $\D(X^{[n]})\xrightarrow\cong \D_{\sym_n}(X^n)$ which differs slightly from the one used in \cite{Sca1} and the subsequent papers. The main observation is that this considerably simplifies the description of the images of tautological bundles and their wedge powers under the equivalence as well as the proofs of these descriptions. As an application, we get new formulae for extension groups and Euler characteristics of bundles on the Hilbert scheme as well as simplified proofs of known formulae. 

Let us describe the results of this paper in more detail. The main point in establishing the derived McKay correspondence $\D(X^{[n]})\cong \D_{\sym_n}(X^n)$ is the identification, due to \cite{Hai}, of the Hilbert scheme $X^{[n]}$ of points on $X$ with  the fine moduli space of $\sym_n$-clusters on $X^n$. These $\sym_n$-clusters are, roughly speaking, 
scheme-theoretic generalisations of free $\sym_n$-orbits; see \autoref{subsect:derivedMcKay} for some more details. In particular, there is a universal family of $\sym_n$-clusters $\cZ\subset X^{[n]}\times X^n$ together with the projections
\[
X^{[n]}\xleftarrow q \cZ \xrightarrow p X^n\,. 
\]
Then the 'usual' derived McKay correspondence, as considered in \cite{BKR, Sca1, Sca2, Scasymmetric,  KruExt, KrugTensortaut, Mea, MMdefK3}, is the equivalence of derived categories
\[
 \Phi:=Rp_*\circ q^*\colon \D(X^{[n]})\xrightarrow \cong \D_{\sym_n}(X^n)\,.
\]
In \cite{Sca1}, the image of a tautological bundle under the derived McKay correspondence is described by the formula \begin{align}\label{formula:Scala}\Phi(F^{[n]})\cong \CC^\bullet_F\,.\end{align} Here, $\CC^\bullet_F$ is a complex of $\sym_n$-equivariant coherent sheaves on $X$ concentrated in degree zero with
\[
\CC^0_F=\bigoplus_{i=1}^n \pr_i^*F 
\]
where $\pr_i\colon X^n\to X$ is the projection to the $i$-th factor; see \autoref{subsection:Scala} for details on the higher degree terms of $\CC^\bullet_F$. 
The formula \eqref{formula:Scala} has been used in \cite{Sca1, Sca2, Scasymmetric,  KruExt, KrugTensortaut, Mea, MMdefK3} in order to prove many interesting consequences. However, the proofs are often computationally involved, mainly due to the higher degree terms of the complex $\CC^\bullet_F$.

The main observation exploited in this paper is that it has benefits to consider the derived McKay correspondence in the reverse direction
\[
 \Psi:=(\_)^{\sym_n}\circ q_*\circ Lp^*\colon \D_{\sym_n}(X^n)\to \D(X^{[n]})
\]
instead. The functor $\Psi$ is again an equivalence, but not the inverse of $\Phi$; see \autoref{prop:Psiequi}.
The technical main result is that, if we replace $\Phi$ by $\Psi^{-1}$, the higher order terms of $\CC^\bullet_F$ vanish and we get a similarly simple description for the images of wedge powers of tautological bundles associated to line bundles on the surface.
\begin{theorem}[\autoref{thm:PsiCC}, \autoref{thm:wedgeline}]\label{thm:intro1}\noindent
\begin{enumerate}
 \item For every coherent sheaf $F\in \Coh X$, we have 
$\Psi(\CC^0_F)\cong F^{[n]}$.
\item For every line bundle $L\in\Pic X$ and $0\le k\le n$ , we have 
\[
 \Psi(\WW^k(L))\cong \wedge^kL^{[n]} \quad\text{where}\quad \WW^k(L)=\bigoplus_{\substack{I\subset\{1,\dots, n\}\\ |I|=k}} \pr_I^*(L^{\boxtimes k})\,. 
\]
Here, $\pr_I\colon X^n\to X^k$ is the projection to the $I$-factors and $\WW^k(L)$ carries a $\sym_n$-linearisation by permutation of the direct summands together with appropriate signs; see \autoref{def:CW} for details.  
\end{enumerate}
\end{theorem}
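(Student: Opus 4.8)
The plan is to evaluate $\Psi$ on the explicit sheaves $\CC^0_F$ and $\WW^k(L)$ by a uniform two–step reduction and then to recognise the output. Write $\Xi\subset X^{[n]}\times X$ for the universal family of $X^{[n]}$, with projections $X^{[n]}\xleftarrow{u}\Xi\xrightarrow{v}X$, so that $F^{[n]}=Ru_*Lv^*F$ (for $F$ locally free this is the usual tautological bundle). The two steps are: \emph{(a)} Frobenius reciprocity, which cuts $\sym_n$ down to the stabiliser of the relevant index set; and \emph{(b)} the projection formula for the finite morphism $\cZ\to X^{[n]}\times X$ (resp.\ $\cZ\to X^{[n]}\times X^k$) assembled from $q$ and the relevant coordinate projections of $p$. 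After these two formal steps the problem collapses to a single identification of sheaves built from $\cZ$ and $\Xi$, which is the only geometric input.

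For part (i): one first observes $\CC^0_F=\bigoplus_{i=1}^n\pr_i^*F\iso\Ind_{\sym_{n-1}}^{\sym_n}(\pr_n^*F)$ in $\D_{\sym_n}(X^n)$, where $\sym_{n-1}\leq\sym_n$ fixes $n$. Since $p,q$ are $\sym_n$-equivariant with $\sym_n$ acting trivially on $X^{[n]}$, the functors $Lp^*$ and $q_*$ commute with induction and $(-)^{\sym_n}\circ\Ind_{\sym_{n-1}}^{\sym_n}=(-)^{\sym_{n-1}}$, whence $\Psi(\CC^0_F)\iso(q_*Lp^*\pr_n^*F)^{\sym_{n-1}}$. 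Setting $r:=(q,\pr_n\circ p)\colon\cZ\to X^{[n]}\times X$ — a finite morphism, $\sym_{n-1}$-equivariant for the trivial action on the target — the projection formula gives, with $\rho,\rho'$ the two projections of $X^{[n]}\times X$,
\[
\Psi(\CC^0_F)\iso R\rho_*\bigl((r_*\ko_\cZ)^{\sym_{n-1}}\lotimes\rho'^*F\bigr).
\]
The crux is the identification of sheaves on $X^{[n]}\times X$: $(r_*\ko_\cZ)^{\sym_{n-1}}\iso\ko_\Xi$. Granting it, $\ko_\Xi\lotimes\rho'^*F$ is the pushforward of $Lv^*F$ along $\Xi\mono X^{[n]}\times X$, and applying $R\rho_*$ returns $Ru_*Lv^*F=F^{[n]}$.

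For part (ii): the key elementary point is that $\WW^k(L)$, with the sign-twisted permutation linearisation of \autoref{def:CW}, is \emph{canonically} the exterior power $\wedge^k\CC^0_L$ in $\Coh_{\sym_n}(X^n)$: indeed $\wedge^k\bigl(\bigoplus_{i=1}^n\pr_i^*L\bigr)=\bigoplus_{|I|=k}\pr_I^*(L^{\boxtimes k})$, and forming $\wedge^k$ of a linearised sheaf produces exactly those signs; equivalently $\WW^k(L)\iso\Ind_{\sym_k\times\sym_{n-k}}^{\sym_n}\bigl(\sgn\boxtimes\mathbf 1\otimes\pr_{\{1,\dots,k\}}^*(L^{\boxtimes k})\bigr)$. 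Running the same reduction as in (i), now with $\sym_{n-1}$ replaced by $\sym_k\times\sym_{n-k}$ and carrying the $\sgn$-twist on the $\sym_k$-factor (using $Lp^*(\sgn\otimes-)=\sgn\otimes Lp^*(-)$), one arrives, with $r^{(k)}:=(q,\pr_{\{1,\dots,k\}}\circ p)\colon\cZ\to X^{[n]}\times X^k$ and $\rho$ now the projection $X^{[n]}\times X^k\to X^{[n]}$, at
\[
\Psi(\WW^k(L))\iso R\rho_*\,\Hom_{\sym_k}\!\bigl(\sgn,\;(r^{(k)}_*\ko_\cZ)^{\sym_{n-k}}\otimes\pr_{X^k}^*(L^{\boxtimes k})\bigr),
\]
where $\Hom_{\sym_k}(\sgn,-)$ is the sign-isotypic component and $\sym_k$ acts by permuting the $k$ factors of $X^k$. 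The $k$-fold version of the crux identifies $(r^{(k)}_*\ko_\cZ)^{\sym_{n-k}}\iso\ko_{\Xi\times_{X^{[n]}}\dots\times_{X^{[n]}}\Xi}$ ($k$-fold relative product, $\sym_k$-equivariantly); this can in turn be deduced from the crux of part (i) by symmetrising one coordinate at a time. One then invokes the classical identity expressing $\wedge^k(u_*v^*L)$ as the antisymmetric part of the pushforward of $(v^*L)^{\boxtimes_{X^{[n]}}k}$ from $\Xi^{\times_{X^{[n]}}k}$ — valid because $u$ is finite flat, both sides are flat over $X^{[n]}$, and the natural antisymmetrisation map is an isomorphism on fibres by Schur–Weyl in characteristic zero — which turns the right-hand side into $\wedge^kL^{[n]}$.

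Main obstacle: everything above apart from the two crux identifications is formal (Frobenius reciprocity, the projection formula for finite morphisms, exactness of $(-)^{\sym_m}$, and the exterior-power formula for a finite flat pushforward). The genuine content is $(r_*\ko_\cZ)^{\sym_{n-1}}\iso\ko_\Xi$ and its $k$-fold analogue — equivalently, that the scheme-theoretic image of $r$ is $\Xi$ and that the canonical map $\ko_\Xi\to(r_*\ko_\cZ)^{\sym_{n-1}}$ is an isomorphism. This rests on Haiman's description of $X^{[n]}$ as the fine moduli space of $\sym_n$-clusters: flatness of $\cZ$ over $X^{[n]}$ reduces it to the fibrewise statement that the spectrum of a $\sym_n$-cluster in the last coordinate is precisely the corresponding length-$n$ subscheme, together with a generation statement for $\ko_\zeta$ over $\ko_X$ already contained in the cluster formalism. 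The advantage of working with $\Psi$ rather than Scala's $\Phi$ is exactly that in this direction the identification is an honest isomorphism of sheaves with no correction terms — no higher $\Tor$ or higher direct images survive — so the images of $\CC^0_F$ and $\WW^k(L)$ are single sheaves, which is what makes the downstream $\Ext$- and Euler-characteristic computations short.
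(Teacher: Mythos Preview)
Your argument for part (i) is correct and is essentially the paper's own proof, phrased in terms of Fourier--Mukai kernels: the crux $(r_*\ko_\cZ)^{\sym_{n-1}}\cong\ko_\Xi$ is exactly the $k=1$ case of the paper's \autoref{prop:qkquotient}, which identifies $\cZ/\sym_{n-k}$ with the scheme-theoretic image $\Xi\binom nk:=(\id\times\pr_{[k]})(\cZ)\subset X^{[n]}\times X^k$.

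Part (ii), however, has a genuine gap. Your $k$-fold crux asserts
\[
(r^{(k)}_*\ko_\cZ)^{\sym_{n-k}}\cong \ko_{\Xi\times_{X^{[n]}}\cdots\times_{X^{[n]}}\Xi}=\ko_{\Xi(n,k)},
\]
but this is false for $k\ge 2$. What is true (and is the content of \autoref{prop:qkquotient}) is that the $\sym_{n-k}$-quotient of $\cZ$ is $\Xi\binom nk$, so the left-hand side equals $\ko_{\Xi\binom nk}$. The scheme $\Xi\binom nk$ is only an \emph{irreducible component} of the fibre product $\Xi(n,k)$: already for $n=k=2$ one has $\sym_{n-k}$ trivial and $r^{(2)}$ the closed embedding $\cZ\hookrightarrow X^{[2]}\times X^2$, whereas $\Xi(2,2)=\Xi\times_{X^{[2]}}\Xi$ contains the relative diagonal, which does not lie in $\cZ$. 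In particular the suggested deduction ``symmetrise one coordinate at a time'' cannot produce the fibre product; the various $\sym_{n-1}$'s for different coordinates do not assemble to give $\Xi(n,k)$ as a quotient of $\cZ$.

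The paper closes exactly this gap with \autoref{lem:antiinva}: although $\ko_{\Xi\binom nk}\not\cong\ko_{\Xi(n,k)}$, the restriction map induces an isomorphism on $\sym_k$-\emph{anti}-invariants, because any $\sym_k$-alternating function must vanish on the extra components of $\Xi(n,k)$ (which are fixed by a transposition). Only after this step can one match up with the fibre-product description $\wedge^kL^{[n]}\cong e_{k*}^{\sym_k}f_k^*(L^{\boxtimes k}\otimes\alt_k)$ of \autoref{lem:wedgetaut}. Note also that this is where the line-bundle hypothesis on $L$ enters: tensoring by the locally free rank-one sheaf $\pr_{X^k}^*L^{\boxtimes k}$ does not disturb the anti-invariant comparison of structure sheaves, whereas for higher-rank $E$ this argument breaks down (and indeed \autoref{thm:wedgeline} is stated only for line bundles).
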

Objects of the form $\WW^k(L)$ play an important role in the construction of exceptional sequences \cite{KSos} and a categorical Heisenberg action \cite{CL, KrugHeisenberg} on the equivariant derived category 
$\D_{\sym_n}(X^n)$. Hence, \autoref{thm:intro1} can be seen as a step towards a geometric interpretation of these categorical constructions in terms of $\D(X^{[n]})$; see \autoref{rem:Heisenberginterpretation} for a few more details on this point of view. 

However, we will mainly use \autoref{thm:intro1} as a tool to compute homological invariants of tautological bundles and their wedge powers. We are able to give proofs of most of the known results on the cohomology and extension groups of these bundles which are much simpler than the original ones of \cite{Sca1, Sca2, KruExt}. Furthermore, we obtain new formulae such as
\begin{theorem}[\autoref{cor:Extformulae}]\label{thm:Extintro}
For $K,L\in \Pic X$ there are functorial isomorphisms
\begin{align*}
\begin{aligned}
&\Ext^*(\wedge^k K^{[n]}, \wedge^\ell L^{[n]})\\\cong& \bigoplus_{i=\max \{0,k+\ell-n\}}^{\min\{k,\ell\}}S^i\Ext^*(K,L)\otimes \wedge^{k-i}\Ho^*(K^\vee)\otimes \wedge^{\ell-i}\Ho^*(L)\otimes S^{n+i-k-\ell}\Ho^*(\reg_X)\,.
\end{aligned}
\end{align*}
\end{theorem}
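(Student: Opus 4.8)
I would deduce this purely formally from \autoref{thm:wedgeline} and \autoref{prop:Psiequi}. By \autoref{thm:wedgeline} we have $\wedge^k K^{[n]}\cong\Psi(\WW^k(K))$ and $\wedge^\ell L^{[n]}\cong\Psi(\WW^\ell(L))$, and since $\Psi$ is an equivalence this gives
\[
\Ext^*_{X^{[n]}}\bigl(\wedge^k K^{[n]},\wedge^\ell L^{[n]}\bigr)\cong\Ext^*_{\sym_n}\bigl(\WW^k(K),\WW^\ell(L)\bigr).
\]
As $|\sym_n|$ is invertible in the base field, the right-hand side is the $\sym_n$-invariant part of $\Ext^*_{X^n}(\WW^k(K),\WW^\ell(L))$, so the task reduces to describing the latter as a $\sym_n$-representation and extracting invariants.

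Then I would unfold $\WW^k(K)=\bigoplus_{|I|=k}\pr_I^*(K^{\boxtimes k})$ and $\WW^\ell(L)=\bigoplus_{|J|=\ell}\pr_J^*(L^{\boxtimes\ell})$ and apply the Künneth formula coordinate-by-coordinate on $X^n$; the summand indexed by a pair $(I,J)$ then becomes
\[
\Ext^*_X(K,L)^{\otimes|I\cap J|}\otimes\Ho^*(K^\vee)^{\otimes|I\setminus J|}\otimes\Ho^*(L)^{\otimes|J\setminus I|}\otimes\Ho^*(\reg_X)^{\otimes|\{1,\dots,n\}\setminus(I\cup J)|},
\]
using $\Ext^*_X(K,\reg_X)=\Ho^*(K^\vee)$, $\Ext^*_X(\reg_X,L)=\Ho^*(L)$ and $\Ext^*_X(\reg_X,\reg_X)=\Ho^*(\reg_X)$. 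Writing $i=|I\cap J|$, the four blocks have sizes $i$, $k-i$, $\ell-i$ and $n+i-k-\ell$, which are non-negative exactly for $\max\{0,k+\ell-n\}\le i\le\min\{k,\ell\}$ --- the index range in the statement.

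Next, for each fixed $i$ in this range the pairs $(I,J)$ with $|I\cap J|=i$ form a single $\sym_n$-orbit, the stabiliser of one such pair being $\sym_{I\cap J}\times\sym_{I\setminus J}\times\sym_{J\setminus I}\times\sym_{\{1,\dots,n\}\setminus(I\cup J)}$. So the $\sym_n$-invariants of the sum of the summands over this orbit are the invariants of a single summand under its stabiliser, and since the stabiliser acts block-wise they split as the tensor product of the invariants of the four blocks; what remains is to compute how each symmetric-group factor acts on its block.

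The heart of the argument --- and the step I expect to be the main obstacle --- is the sign bookkeeping in the $\sym_n$-linearisation of $\WW^k(K)$ and $\WW^\ell(L)$ (\autoref{def:CW}). A permutation stabilising $(I,J)$ acts on the $(I,J)$-summand of $\WW^k(K)$ by the sign of the permutation it induces on $I$, and on the $(I,J)$-summand of $\WW^\ell(L)$ by the sign of the permutation it induces on $J$; on $\Ext$ these two signs multiply. Hence $\sym_{I\cap J}$ acts on $\Ext^*_X(K,L)^{\otimes i}$ with the two signs cancelling, so only the Koszul sign of the grading survives and the invariants are $S^i\Ext^*(K,L)$; $\sym_{I\setminus J}$ acts on $\Ho^*(K^\vee)^{\otimes(k-i)}$ with one surviving sign, giving invariants $\wedge^{k-i}\Ho^*(K^\vee)$; symmetrically the $J\setminus I$ block contributes $\wedge^{\ell-i}\Ho^*(L)$; and the block outside $I\cup J$ carries no sign and contributes $S^{n+i-k-\ell}\Ho^*(\reg_X)$, all symmetric and exterior powers being understood in the graded sense (consistently with the claimed formula). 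Summing over $i$ gives the asserted isomorphism, and functoriality in $K$ and $L$ follows since every step --- $\Psi$, the formation of $\WW^\bullet$, and the Künneth isomorphisms --- is functorial. Besides the signs, the only other care needed is in the Koszul signs when reordering the Künneth factors into the four blocks; the rest is routine.
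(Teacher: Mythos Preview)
Your proposal is correct and follows essentially the same route as the paper: reduce via \autoref{thm:wedgeline} and \autoref{prop:Psiequi} to the equivariant $\Hom$-space $\Hom^*_{\sym_n}(\WW^k(K),\WW^\ell(L))$, then compute the latter by K\"unneth, orbit decomposition of the index set of pairs $(I,J)$, and extraction of stabiliser invariants with the sign bookkeeping coming from the $\alt$-twists in \autoref{def:CW}. The only organisational difference is that the paper isolates the equivariant computation as a separate \autoref{prop:WExt}, stated for arbitrary $E,F\in\D(X)$ rather than line bundles, and invokes \autoref{rem:Danila} for the orbit-invariants step that you spell out directly.
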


In \autoref{prop:relativeFMtensor} and \autoref{cor:relativeFMcoh} we observe that it can be very useful for the computation of tensor products of bundles on the Hilbert scheme to have descriptions of their images under both, $\Phi$ and $\Psi^{-1}$. As an application, we prove the formula 
 \[
\sum_{n=0}^\infty\chi(F^{[n]}\otimes \Lambda_u L^{[n]})Q^n=\frac{(1+uQ)^{\chi(L)}}{(1-Q)^{\chi(\reg_X)}}\cdot \sum_{p=1}^\infty (-1)^{p+1}\chi\bigl(F\otimes (L^{p-1}u^{p-1}+L^pu^p)\bigr)Q^p 
 \]
for a generating function of the Euler characteristics where $F\in \Coh X$ and $L\in \Pic X$; see \autoref{thm:tensorEuler} and \autoref{rem:tensorEuler}. Here, for a vector bundle $E$ of rank $r$ and a formal parameter $t$, we use the notational convention $\Lambda_{t}E:=\sum_{i=0}^r(\wedge^i E)t^i$ as a sum in the Grothendieck group.

\autoref{thm:Extintro} is a generalisation and strengthening of the formula for the Euler bicharacteristics 
\begin{align}\label{Lcountingintro}
\sum_{n=0}^\infty \chi(\Lambda_{-v}L^{[n]}, \Lambda_{-u} L^{[n]})Q^n= \exp\left(\sum_{r=1}^\infty \chi(\Lambda_{-v^r} L, \Lambda_{-u^r} L) \frac{Q^r}r    \right) 
\end{align}
of \cite{WangZhouTaut}; see \autoref{Appendix} for details. In \textit{loc.\ cit.\ }formula \eqref{Lcountingintro} is conjectured to hold in greater generality. 
In \autoref{sec:further}, we give some restrictions to this conjecture showing that it does not hold if we replace the surface $X$ by a curve, neither if we replace the line bundle $L$ by a vector bundle of higher rank. 
In \autoref{prop:curvecase}, we also do some further computations concerning tautological bundles on Hilbert schemes of points on curves. 

\medskip
\noindent
\textbf{Acknowledgements.}
The author thanks J\"org Sch\"urmann for interesting discussions and S\"onke Rollenske for comments on the text.
\section{Preliminaries}
\subsection{General conventions}
All our varieties are connected and defined over the complex numbers $\IC$. 
For $M$ a variety, $\D(M):=\D^b(\Coh(M))$ denotes the bounded derived category of coherent sheaves.
We do not distinguish in the notation between a functor between abelian categories and its derived functor. For example, if $f\colon X\to Y$ is a morphism, we will write $f^*\colon \D(Y)\to \D(X)$ instead of $Lf^*$ for the derived pull-back.
\subsection{Equivariant sheaves and derived categories}

Let $G$ be a finite group acting on a variety $M$. We denote by $\Coh_G(M)$ the abelian category of equivariant coherent sheaves and by $\D_G(M):=\D^b(\Coh_G(M))$ its bounded derived category. In this section, we collect some facts about equivariant categories and functors that we need later. We refer to \cite[Sect.\ 4]{BKR} or \cite{Elagin} for further details.

Let $H\subset G$ be a subgroup. The forgetful (also called \textit{restriction}) functor $\Res_G^H\colon \Coh_G(M)\to \Coh_H(M)$ has a both-sided adjoint, namely the \textit{induction functor} $\Ind_H^G\colon \Coh_H(M)\to \Coh_G(M)$. Concretely, for $E\in \Coh(M)$, we have $\Ind_H^G(E)=\oplus_{g\in G/H} g^*E$ equipped with a $G$-linearisation which combines the $H$-linearisation of $E$ with appropriate permutations of the direct summands.  

In our case, the group $G$ will usually be the symmetric group $\sym_n$. We denote its non-trivial character by $\alt$ or $\alt_n$. We get an autoequivalence $\_\otimes \alt\colon \Coh_{\sym_n}(M)\to \Coh_{\sym_n}(M)$ given by changing the sign of the linearisations appropriately (of course, there is also an endofunctor $\_\otimes \rho\colon \Coh_G(M)\to \Coh_G(M)$ for an arbitrary representation $\rho$ of a finite group $G$). 

Let $G$ act on a second smooth variety $N$ and let $f\colon M\to N$ be a $G$-equivariant morphism. Then pull-backs and push-forwards of equivariant sheaves inherit canonical linearisations so that we get functors $f^*\colon \Coh_G(N)\to \Coh_G(M)$ and, if $f$ is proper, $f_*\colon \Coh_G(M)\to \Coh_G(N)$. Furthermore, there are equivariant tensor products and homomorphism sheaves.

Restriction, induction, and tensor products by representations commute with equivariant pull-backs and push-forwards which means that we have the following isomorphisms of functors
\begin{align}\label{Indcommutes}
\begin{aligned}
 \Res f^*\cong f^*\Res\,, \quad \Res f_*\cong f_*\Res\,,\quad \Ind f^*\cong f^*\Ind \,,\quad \Ind f_*\cong f_*\Ind\,,\\ f_*(\_\otimes \rho)\cong f_*(\_)\otimes \rho\quad,\quad f^*(\_\otimes \rho)\cong f^*(\_)\otimes \rho\,.
\end{aligned}
 \end{align}

All the functors discussed above induce functors on the level of the derived categories. We write these induced functors in the same way as the functors between the abelian categories, e.g.\ we write $f_*\colon \D_G(M)\to \D_G(N)$ instead of $Rf_*\colon \D_G(M)\to \D_G(N)$.

For two objects $E,F\in \D_G(M)$, we denote the graded Hom-space by 
\[\Hom_G^*(E,F):=\oplus_{i\in \IZ}\Hom_G^i(E,F)\quad \text{where}\quad \Hom^i_G(E,F):=\Hom_{\D_G(M)}(E,F[i])\,.\] We often suppress the restriction functor in the notation writing $E:=\Res E\in \D(M)$ for $E\in \D_G(M)$. The Hom-space $\Hom^i(E,F):=\Hom_{\D(M)}(\Res E, \Res F[i])$ has a canonical $G$-action induced by the $G$-linearisations of $E$ and $F$ and the invariants under this action are the Hom-spaces in the equivariant category:
\begin{align}\label{GinvaHom}
\Hom_G^*(E,F)\cong \Hom^*(E,F)^G\,. 
\end{align}

If $G$ acts trivially on $M$, a $G$-equivariant sheaf $E$ is simply a sheaf together with a group action. Hence, we can take the invariants of $E(U)$ for every open subset $U\subset X$, which gives a functor $(\_)^G\colon \Coh_G(M)\to \Coh(M)$. If $f\colon M\to N$ is a morphism between varieties on which $G$ acts trivially, we have
\begin{align}\label{invacommutes}
 (\_)^Gf_*\cong f_*(\_)^G\quad,\quad (\_)^Gf^*\cong f^*(\_)^G\,.  
\end{align}
Let $G$ act on a variety $S$ and let $\pi\colon S\to T$ be a $G$-invariant morphism of varieties. In other words, $\pi$ is $G$-equivariant when we consider $G$ acting trivially on $T$. Then we write 
\[
 \pi_*^G:=(\_)^G\circ \pi_*\colon \Coh_G(S)\to \Coh(T)\,.
\]
Furthermore, we simply write $\pi^*\colon \Coh(T)\to \Coh_G(S)$ for the functor which first equips every sheaf with the trivial $G$-action and then applies the equivariant pull-back.

\begin{lemma}\label{lem:quotientff}
Let $\pi \colon M\to M/G$ be the quotient of a smooth variety by a finite group. Then we have $\pi_*^G\pi^*\cong \id$ as endofunctors of the subcategory of perfect complexes $\Dperf(M/G)\subset\D(M/G)$. 
\end{lemma}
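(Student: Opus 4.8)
The strategy is to reduce the statement to a single classical input, namely the canonical isomorphism $\ko_{M/G}\cong(\pi_*\ko_M)^G$ expressing $M/G$ as the categorical quotient, by feeding it into the projection formula; everything else is formal.

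First I would record the elementary properties of $\pi$. Being the quotient of a variety by a finite group, $\pi$ is finite, hence affine and proper, so $\pi_*=R\pi_*$ is exact, $\pi_*\ko_M$ is a $G$-equivariant coherent sheaf on $M/G$ concentrated in degree $0$, and $\ko_{M/G}\cong(\pi_*\ko_M)^G$. Note that $\pi$ is in general \emph{not} flat (e.g.\ $\IA^2\to\IA^2/(\IZ/2)$ is not, the fibre over the singular point being too large), so $\pi^*$ genuinely denotes the derived pull-back $L\pi^*$. This is exactly why the statement is restricted to perfect complexes: $L\pi^*$ sends $\Dperf(M/G)$ into $\Dperf(M)\subset\D(M)$, whereas on all of $\D(M/G)$ it need not even preserve boundedness.

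The heart of the argument is the $G$-equivariant projection formula. For $\cF\in\Dperf(M/G)$, equipped with the trivial $G$-linearisation, there is an isomorphism $\pi_*\pi^*\cF\cong\pi_*\ko_M\lotimes\cF$ in $\D_G(M/G)$, functorial in $\cF$; the non-equivariant version is standard for perfect $\cF$, and since all functors and the comparison map are defined functorially, they carry the expected $G$-linearisations. Now apply $(\_)^G$. As $|G|$ is invertible in $\IC$, this functor is exact and equals multiplication by the averaging idempotent $e=\tfrac1{|G|}\sum_{g\in G}g$; since $\cF$ carries the trivial linearisation, $e$ acts on $\pi_*\ko_M\lotimes\cF$ only through the first tensor factor, so $(\_)^G$ commutes with $\_\lotimes\cF$. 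Combining this with the base case gives
\[
 \pi_*^G\pi^*\cF=(\pi_*\pi^*\cF)^G\cong(\pi_*\ko_M)^G\lotimes\cF\cong\ko_{M/G}\lotimes\cF\cong\cF\,,
\]
and all these isomorphisms are natural in $\cF$, yielding the asserted isomorphism of endofunctors of $\Dperf(M/G)$.

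An alternative route, which sidesteps the projection formula, is to localise: $\pi^*$, $\pi_*$ (as $\pi$ is affine) and $(\_)^G$ all commute with restriction to opens of $M/G$, so the claim is local on $M/G$; every point has an affine open neighbourhood of the form $U=V/G$ with $V=\pi^{-1}(U)\subset M$ affine and $G$-invariant, reducing us to $M=\Spec B$, $M/G=\Spec B^G$. There $\Dperf(M/G)$ is the thick subcategory generated by $\ko_{M/G}$, the endofunctors $\id$ and $\pi_*^G\pi^*$ are triangulated, the unit of the adjunction $\pi^*\dashv\pi_*^G$ furnishes a natural transformation between them, and the full subcategory on which it is an isomorphism is thick; since it contains $\ko_{M/G}$ — where the unit is precisely the canonical isomorphism $\ko_{M/G}\cong(\pi_*\ko_M)^G$ — it must be all of $\Dperf(M/G)$. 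The only point needing genuine care in either approach is the equivariance bookkeeping — that the projection-formula comparison morphism respects $G$-linearisations and that $(\_)^G$ commutes with the derived tensor product — which is routine; the actual content of the lemma is the classical identity $(\pi_*\ko_M)^G\cong\ko_{M/G}$.
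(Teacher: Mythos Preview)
Your proof is correct and follows essentially the same approach as the paper: the paper's proof is simply ``By definition of the quotient variety, we have $\pi_*^G\reg_M\cong \reg_{M/G}$; now the assertion follows by the (equivariant) projection formula.'' Your write-up merely unpacks this two-line argument in more detail (and offers a second, thick-subcategory route), but the key inputs --- the identity $(\pi_*\reg_M)^G\cong\reg_{M/G}$ fed into the equivariant projection formula --- are identical.
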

\begin{proof}
By definition of the quotient variety, we have $\pi_*^G\reg_M\cong \reg_{M/G}$. Now, the assertion follows by the (equivariant) projection formula. 
\end{proof}
\begin{remark}
We need to restrict the pull-back to the category of perfect complexes since, if the quotient $M/G$ is not smooth, $\pi^*$ does not preserve the bounded derived category.
\end{remark}

%
%

Let $G$ act trivially on $M$.
The following is a straight-forward generalisation of Frobenius reciprocity; compare \cite[Lem.\ 2.2]{Dan} or \cite[Sect.\ 3.5]{Kru4}.
\begin{lemma}\label{lem:globalFrob}
There is an isomorphism of functors $(\_)^G\Ind_U^G\cong (\_)^U\colon \D_U(M)\to \D(M)$.
\end{lemma}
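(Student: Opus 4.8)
The plan is to reduce the assertion to a statement about the underlying abelian categories and then appeal to uniqueness of adjoints. Since we work over $\IC$, the averaging idempotent $\tfrac1{\abs G}\sum_{g\in G}g\in\IC[G]$ exhibits $(\_)^G\colon\Coh_G(M)\to\Coh(M)$ as a direct summand of the identity functor, so it is exact; likewise $(\_)^U$ is exact, and $\Ind_U^G$ is exact because on underlying sheaves it is a finite direct sum of pull-backs (all equal to the identity, as $G$ acts trivially). Consequently each of these three functors is computed on the bounded derived category by termwise application, and a natural isomorphism between the induced functors $\Coh_U(M)\to\Coh(M)$ automatically induces one between the functors $\D_U(M)\to\D(M)$. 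It therefore suffices to prove $(\_)^G\Ind_U^G\cong(\_)^U$ on the level of abelian categories.

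I would then identify $(\_)^G$ as the right adjoint of the trivial-linearisation functor $\triv_G\colon\Coh(M)\to\Coh_G(M)$ (which, in the conventions above, is $\pi^*$ for $\pi=\id_M$): for $F\in\Coh(M)$ and $E\in\Coh_G(M)$, a morphism $\triv_G F\to E$ is a morphism of sheaves $F\to E$ whose image is contained in the invariant subsheaf $E^G$, since $G$ acts trivially on $M$ and on $\triv_G F$; hence $\Hom_G(\triv_G F,E)\cong\Hom(F,E^G)$ naturally in $F$ and $E$. The same reasoning shows that $(\_)^U$ is right adjoint to $\triv_U\colon\Coh(M)\to\Coh_U(M)$, and restricting the trivial $G$-linearisation to $U$ yields the trivial $U$-linearisation, i.e.\ $\Res_G^U\circ\triv_G=\triv_U$.

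Finally, $\Ind_U^G$ is a right adjoint of $\Res_G^U$ (indeed a two-sided adjoint, as recalled above), so the composite $(\_)^G\circ\Ind_U^G$ is a right adjoint of $\Res_G^U\circ\triv_G=\triv_U$. Since $(\_)^U$ is also right adjoint to $\triv_U$, uniqueness of adjoint functors provides the desired natural isomorphism $(\_)^G\Ind_U^G\cong(\_)^U$, and the lemma follows.

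The argument is essentially formal; the only point needing a little care is the exactness bookkeeping that legitimises passing from abelian categories to derived categories. If one prefers a hands-on verification, one can instead use that $\Ind_U^G E\cong E^{\oplus[G:U]}$ with $G$ permuting the summands transitively and the stabiliser of a chosen summand acting through the $U$-linearisation of $E$; a $G$-invariant local section is then the same datum as a $U$-invariant local section of that summand, which is precisely the sheaf-theoretic form of the classical Frobenius reciprocity identity $(\Ind_U^G V)^G\cong V^U$.
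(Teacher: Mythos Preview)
Your proof is correct. The paper does not actually supply a proof of this lemma; it merely calls it a ``straight-forward generalisation of Frobenius reciprocity'' and cites \cite[Lem.\ 2.2]{Dan} and \cite[Sect.\ 3.5]{Kru4}. Your adjoint-functor argument is a clean way to establish it, and the hands-on description you add at the end---identifying a $G$-invariant section of $\Ind_U^G E\cong E^{\oplus[G:U]}$ with a $U$-invariant section of one summand---is essentially the content of the cited references and matches the spirit of \autoref{rem:Danila}.
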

\begin{remark}\label{rem:Danila}
One direct consequence of the lemma is the following. Let $E=\oplus_{i\in \cI}E_i\in \D(X)$ be a finite direct sum. Assume $E$ has a $G$-linearisation $\lambda$ and there is a $G$-action on $\cI$ such that $\lambda_g(E_i)=E_{g(i)}$. We say that $\lambda$ \textit{induces} the action on the index set $\cI$. Let $\{i_1,\dots,i_k\}$ be a set of representatives of the $G$-orbits of $\cI$ and set $G_j:=\stab_G(i_j)$ for $j=1,\dots, k$. Then 
\[
E^G\cong \bigoplus_{j=1}^k E_{i_j}^{G_j}\,. 
\]
\end{remark}

\subsection{Hilbert schemes of points and tautological sheaves}

Throughout the text, $X$ will be a smooth quasi-projective surface. For a non-negative integer $n\in \IN$, we denote by $X^{[n]}$ the Hilbert scheme of $n$ points on $X$. It is the fine moduli space of closed zero-dimensional subschemes of length $n$ of $X$. Hence, there is a universal family $\Xi\subset X^{[n]}\times X$ which is flat and finite of degree $n$ over $X$. The Hilbert scheme $X^{[n]}$ is smooth of dimension $2n$; see \cite[Thm.\ 2.4]{Fog}. 

The symmetric group $\sym_n$ acts on the cartesian power $X^n$ by permutation of the factors. We call the quotient $X^{(n)}:=X^n/\sym_n$ the \textit{$n$-th symmetric power} of $X$. We write the points of $X^{(n)}$ as formal sums of points of $X$. More concretely, $x_1+\dots+x_n\in X^{(n)}$ is the point lying under the $\sym_n$-orbit of $(x_1,\dots, x_n)\in X^n$. The Hilbert scheme is a resolution of the singularities of the symmetric power via the \textit{Hilbert--Chow morphism}
\[
 \mu\colon X^{[n]}\to X^{(n)}\quad, \quad [\xi]\mapsto \sum_{x\in \xi} \ell(\reg_{\xi,x})\cdot x\,.
\]
which sends a zero-dimensional subscheme to its weighted support.  
\begin{definition}\label{def:taut}
Let $\pr_X\colon \Xi\to X$ and $\pr_{X^{[n]}}\colon \Xi\to X^{[n]}$ be the projections from the universal family. 
We define the \textit{tautological functor} by 
\[
(\_)^{[n]}:=\pr_{X^{[n]}*}\pr_X^*\colon \D(X)\to \D(X^{[n]})\,. 
\]
Equivalently, $(\_)^{[n]}\cong \FM_{\reg_{\Xi}}$ can be written as the Fourier--Mukai transform along the structure sheaf of the universal family.
For $F\in \D(X)$, its image $F^{[n]}\in \D(X^{[n]})$ under the tautological functor is called the \textit{tautological object} associated to $F$.
\end{definition}
Let $E$ be a vector bundle on $X$ which we may consider as a complex concentrated in degree zero. Then, since $\pr_{X^{[n]}}\colon \Xi\to X^{[n]}$ is flat and finite of degree $n$, the object $E^{[n]}$ is a vector bundle (identified with a complex concentrated in degree zero) of $\rank E^{[n]}=n\cdot \rank E$. More generally, if $E\in \Coh(X)$ is a coherent sheaf, its image $E^{[n]}$ is again concentrated in degree zero; see \cite[Prop.\ 3]{Sca2}. Accordingly, we will also speak of \textit{tautological bundles} and \textit{tautological sheaves}.

\subsection{Derived McKay correspondence}\label{subsect:derivedMcKay}

Let $G$ be a finite group acting on a smooth quasi-projective variety $M$. A \textit{G-cluster} on $M$ is a zero-dimensional $G$-invariant closed subscheme $Z\subset M$ such that $\reg(Z)$ is given by the regular representation $\IC[G]$ of $G$. Every free $G$-orbit, viewed as a reduced subscheme, is a $G$-cluster. But there are also $G$-clusters whose support is a non-free $G$-orbit. These $G$-clusters are necessarily non-reduced. We denote by $\GHilb(M)$ the fine moduli space of $G$-clusters. The scheme $\GHilb$ can be reducible and we denote by $\Hilb^G(M)\subset \GHilb(M)$ the irreducible component containing all the points which correspond to free orbits. We call $\Hilb^G(M)$ the \textit{G-Hilbert scheme}. There is a morphism $\tau\colon \Hilb^G(M)\to M/G$, called the \textit{G-Hilbert--Chow morphism}, which sends $G$-clusters to the orbits on which they are supported.  
Let $\cZ\subset \Hilb^G(M)$ be the universal family of $G$-clusters. We have a commutative diagram
\[
\xymatrix{
 \cZ \ar^p[r]  \ar^{q}[d] & M\ar^{\pi}[d]   \\
 \Hilb^G(M) \ar^{\tau}[r] & M/G 
}
\]
where $p$ and $q$ are the projections and $\pi$ is the quotient morphism.

\begin{theorem}[\cite{BKR}]\label{thm:BKR}
Let $\omega_M$ be a locally trivial $G$-bundle, which means that for every $x\in M$ the stabiliser subgroup $G_x\le G$ acts trivially on the fibre $\omega_M(x)$. Furthermore, assume that 
\[\dim\bigl(\Hilb^G(M)\times_{M/G}\Hilb^G(M)\bigr)\le \dim M +1\]
where the fibre product is defined by the $G$-Hilbert--Chow morphism $\tau\colon \Hilb^G(M)\to M/G$. Then $\tau\colon \Hilb^G(M)\to M/G$ is a crepant resolution and 
\[
 \Phi:=p_*q^*\colon \D(\Hilb^G(M))\to \D_G(M)
\]
is an equivalence of triangulated categories. 
\end{theorem}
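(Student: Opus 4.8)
The plan is to follow the argument of Bridgeland, King and Reid \cite{BKR}. Write $Y:=\Hilb^G(M)$ and $n:=\dim M$. By construction $Y$ is irreducible, $\cZ$ is a closed subscheme of $Y\times_{M/G}M$ (so that $p$ is proper), and $\tau$ restricts to an isomorphism over the dense open subset of $M/G$ parametrising free orbits; hence $\dim Y=n$ and $\tau$ is proper and birational. The functor $\Phi=p_*q^*$ is the Fourier--Mukai transform with kernel $\reg_\cZ$, and, since $p$ is proper and $q$ is finite and flat, it has a left and a right adjoint (given, once $Y$ is known to be smooth, by Fourier--Mukai kernels built from the derived dual of $\reg_\cZ$ through equivariant relative Grothendieck--Verdier duality). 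The four steps will be: (i) reduce full faithfulness of $\Phi$ to an orthogonality statement for the images of point sheaves; (ii) verify that statement using the dimension hypothesis; (iii) upgrade full faithfulness to an equivalence; (iv) deduce smoothness of $Y$ and crepancy of $\tau$ from the equivalence.

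For step (i): the skyscrapers $\{\reg_y\}_{y\in Y}$ form a spanning class of $\D(Y)$, and one computes $\Phi(\reg_y)\cong\reg_{Z_y}$, the structure sheaf of the $G$-cluster $Z_y\subset M$ with its natural linearisation. By the standard Fourier--Mukai criterion for full faithfulness, since $\Phi$ has both adjoints it is enough to prove
\[
\Hom^i_G(\reg_{Z_{y_1}},\reg_{Z_{y_2}})\cong
\begin{cases}\IC & y_1=y_2\text{ and }i=0,\\ 0 & y_1\ne y_2,\text{ or }i<0,\text{ or }i>n,\end{cases}
\]
for all closed points $y_1,y_2\in Y$. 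Three of these are immediate: $\Hom^{<0}$ vanishes because $\reg_{Z_y}$ is a sheaf; if $\tau(y_1)\ne\tau(y_2)$ the two sheaves have disjoint supports; and $\Hom^0_G(\reg_{Z_y},\reg_{Z_y})=\IC$ because $H^0(\reg_{Z_y})\cong\IC[G]$ as a $G$-representation, so that a $G$-equivariant $\reg_{Z_y}$-module endomorphism is multiplication by a $G$-fixed central element of $\IC[G]$, i.e.\ a scalar.

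Step (ii) --- the vanishing of $\Hom^i_G(\reg_{Z_{y_1}},\reg_{Z_{y_2}})$ for $y_1\ne y_2$ lying over the same orbit and for $i>n$ --- is the technical core, and the step I expect to be the main obstacle. Equivalently, one wants the Fourier--Mukai kernel $\mathcal{P}\in\D(Y\times Y)$ representing $\Phi^!\Phi$ to be isomorphic to $\reg_\Delta$. Here the hypothesis that $\omega_M$ is a locally trivial $G$-bundle enters: it makes $\Phi^*$ and $\Phi^!$ differ only by a Serre-type twist that acts trivially along $\cZ$, so that $\mathcal{P}$ is self-dual up to a shift (and symmetric under the swap of the two factors). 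One checks that the cohomology sheaves of $\mathcal{P}$ are supported on $W:=Y\times_{M/G}Y$, and that over the free-orbit locus, where $\Phi$ is a twist of the quotient pull-back, $\mathcal{P}$ restricts to $\reg_\Delta$ (compare \autoref{lem:quotientff}). The bound $\dim W\le n+1$ then limits the dimensions of the supports of the extreme cohomology sheaves of $\mathcal{P}$; combining this with the self-duality, which pairs the lowest and highest cohomology sheaves, and with the new intersection theorem of commutative algebra (Peskine--Szpiro, Roberts) --- which bounds the projective length of a perfect complex from below by the codimension of the support of its homology --- forces $\mathcal{P}$ to be concentrated in degree $0$ and equal to $\reg_\Delta$. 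Unwinding the Fourier--Mukai formalism gives the remaining orthogonality, so $\Phi$ is fully faithful. The delicate point is that this argument is carried out on $Y\times Y$ and so interacts with the smoothness of $Y$; following \cite{BKR}, one arranges the logic so that the equivalence is established before, and then used to prove, smoothness of $Y$ (in the case of Hilbert schemes of points this issue disappears, since $X^{[n]}$ is already known to be smooth).

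Finally, for step (iii): a fully faithful Fourier--Mukai transform whose target $\D_G(M)$ is indecomposable (here $M$ is connected) and which commutes with the Serre functors up to tensoring by $\omega_M[n]$ --- again this twist is harmless because $\omega_M$ is a locally trivial $G$-bundle --- is automatically an equivalence, since its essential image is then a nonzero full triangulated subcategory stable under the adjoints and the Serre functor, hence all of $\D_G(M)$. For step (iv): under the equivalence $\D(Y)\cong\D_G(M)$ every object of $\D(Y)$ is perfect (as every object of $\D_G(M)$ is, $M$ being smooth), so $Y$ is smooth and $\tau\colon Y\to M/G$ is a resolution of singularities; and transporting the Serre functor of $\D(Y)$ across $\Phi$ and comparing it with the Serre functor $\_\otimes\omega_M[n]$ of $\D_G(M)$ yields $\omega_Y\cong\tau^*\omega_{M/G}$, i.e.\ $\tau$ is crepant.
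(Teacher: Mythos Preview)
The paper does not give a proof of this theorem: it is quoted from \cite{BKR}, and only the statement (specialised to $M=X^n$, $G=\sym_n$) is used afterwards. So there is nothing in the present paper to compare your write-up against; what you have written is a sketch of the original Bridgeland--King--Reid argument.

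As such a sketch it is accurate in outline: the spanning class of skyscrapers, the identification $\Phi(\reg_y)\cong\reg_{Z_y}$, the $\Hom^0_G$ computation via the regular representation, the self-duality of the kernel coming from local triviality of $\omega_M$, the appeal to the new intersection theorem bounding the support of the cohomology of $\mathcal P$, and the final Serre-functor step are exactly the ingredients of \cite{BKR}. One point of logical ordering deserves tightening. In \cite{BKR} the smoothness of $Y$ is not deduced in your step (iv) from an already-established equivalence of bounded derived categories; rather it is obtained pointwise in the course of step (ii), by showing directly that the left adjoint applied to $\Phi(\reg_y)$ returns $\reg_y$ (so each $\reg_y$ is perfect, hence $Y$ is regular at $y$). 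Only once $Y$ is known to be smooth are the adjoints, Serre duality on $Y$, and the Bondal--Orlov criterion available in the clean form you invoke in steps (i) and (iii). Your parenthetical remark shows you are aware of this circularity, but the dependency should be made explicit in the main flow rather than deferred to an aside.
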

In this paper, we consider the case that $M=X^n$ is the cartesian power of a smooth quasi-projective surface $X$ and $G=\sym_n$ acts by permutation of the factors.
\begin{theorem}[\cite{Hai}]
There is an isomorphism $X^{[n]}\cong \Hilb^{\sym_n}(X^n)$ which identifies $\mu\colon X^{[n]}\to X^{(n)}$ and $\tau\colon \Hilb^{\sym_n}(X^n)\to X^{(n)}$. 
\end{theorem}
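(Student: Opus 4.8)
The statement is Haiman's theorem \cite{Hai}, so any proof must route through his work on the isospectral Hilbert scheme; the plan is to isolate that as the single hard input and keep everything around it formal. The first step is a reduction to $X=\IA^2$: the schemes $X^{[n]}$ and $\Hilb^{\sym_n}(X^n)$, as well as the morphisms $\mu$ and $\tau$, are all built locally over the symmetric power, so near a point $\sum_i m_i x_i$ of $X^{(n)}$ with distinct $x_i$ one chooses pairwise disjoint affine neighbourhoods $U_i\ni x_i$ and obtains compatible product decompositions of the two preimages, $\prod_i U_i^{[m_i]}$ on one side and $\prod_i\Hilb^{\sym_{m_i}}(U_i^{m_i})$ on the other; pulling back along \'etale maps $U_i\to\IA^2$ then reduces to $X=\IA^2$.

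The core step is to construct a morphism $\alpha\colon X^{[n]}\to\Hilb^{\sym_n}(X^n)$ out of the \emph{isospectral Hilbert scheme} $X_n\coloneqq\bigl(X^{[n]}\times_{X^{(n)}}X^n\bigr)_{\mathrm{red}}\subset X^{[n]}\times X^n$. Here one invokes Haiman's theorem: $X_n$ is normal, Cohen--Macaulay and Gorenstein, the first projection $\rho\colon X_n\to X^{[n]}$ is flat --- hence, being generically an $n!$-sheeted cover, finite locally free of rank $n!$ --- and the fibres of $\rho$, with the $\sym_n$-action induced from $X^n$, are isomorphic to the regular representation $\IC[\sym_n]$. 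Thus $X_n$ is a flat family of $\sym_n$-clusters parametrised by $X^{[n]}$, and by the universal property of $\GHilb(X^n)$ it is classified by a morphism $\alpha$ with $(\alpha\times\id_{X^n})^*\cZ\cong X_n$.

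The remaining work is formal. Over the open dense locus of reduced length-$n$ subschemes, $\alpha$ is the evident identification with the free-orbit locus of $\GHilb(X^n)$, so $\alpha$ is birational; and since $X_n\subseteq X^{[n]}\times_{X^{(n)}}X^n$, the cluster $\rho^{-1}(\xi)$ lies over $\mu(\xi)$, so $\tau\circ\alpha=\mu$, whence $\alpha$ is proper ($\mu$ is proper and $\tau$ separated). Its image is therefore closed, irreducible and contains every free orbit, hence equals $\Hilb^{\sym_n}(X^n)$; so $\alpha$ is surjective and identifies $\mu$ with $\tau$. Finally, $\alpha$ is quasi-finite --- a positive-dimensional fibre of $\alpha$ would lie inside a fibre of $\mu$ and would force the flat family $X_n$ to be constant along a curve contracted by $\alpha$, contradicting Haiman's description of the fibres of $\rho$ --- so by Zariski's Main Theorem $\alpha$ is finite, and a finite birational morphism onto the variety $\Hilb^{\sym_n}(X^n)$, which is normal by the same structural results, is an isomorphism.

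The genuine obstacle is exactly the cited theorem of \cite{Hai}: the normality, Cohen--Macaulayness and Gorensteinness of $X_n$ together with the flatness of $\rho$. Haiman derives these from the \emph{polygraph theorem} --- the freeness of the coordinate ring of the polygraph $Z(\ell,n)\subset(\IA^2)^n\times(\IA^2)^\ell$ as a module over $\IC[x_1,y_1,\dots,x_n,y_n]$ --- proved by an intricate inductive degeneration argument. The reduction to $\IA^2$, the use of the universal property of $\GHilb$, and the Zariski's-Main-Theorem endgame are comparatively routine.
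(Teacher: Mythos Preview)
The paper does not prove this statement; it is quoted from \cite{Hai} as a black box. Later, at the start of \autoref{sec:tautBKRH}, the paper does recall how Haiman constructs one direction of the isomorphism, namely the morphism $\Hilb^{\sym_n}(X^n)\to X^{[n]}$ obtained by showing that $(\id\times\pr_1)(\cZ)\subset\Hilb^{\sym_n}(X^n)\times X$ is flat of degree $n$; this is the inverse of your $\alpha$.

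Your outline correctly isolates the hard input---flatness of the isospectral Hilbert scheme $X_n\to X^{[n]}$ and the polygraph theorem behind it---and your construction of $\alpha$ is exactly Haiman's. The endgame, however, has a genuine gap. You apply Zariski's Main Theorem with the target $\Hilb^{\sym_n}(X^n)$ declared normal ``by the same structural results'', but Haiman's structural results are about $X_n$, not about $\Hilb^{\sym_n}(X^n)$; a priori normality (let alone smoothness) of the latter is precisely what is at stake, and only follows \emph{after} the isomorphism with $X^{[n]}$ is established. The quasi-finiteness step is also not an argument: that the flat family $X_n$ is constant along a contracted curve does not contradict each fibre being the regular representation---all fibres being abstractly isomorphic is perfectly compatible with a trivial family. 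The clean fix, which is Haiman's own route and the one the paper sketches, is to construct the inverse $\beta\colon\Hilb^{\sym_n}(X^n)\to X^{[n]}$ directly by checking that, for every $\sym_n$-cluster $Z\subset X^n$, the scheme-theoretic image $\pr_1(Z)\subset X$ has length $n$ and varies flatly. Then $\beta\circ\alpha$ and $\alpha\circ\beta$ agree with the identity on the dense open loci of reduced subschemes (respectively free orbits), hence everywhere by reducedness and separatedness, and no normality hypothesis on the target is needed.
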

In particular, we get a commutative diagram
\begin{align}\label{diag:HC}
\xymatrix{
 \cZ \ar^p[r]  \ar^{q}[d] & X^n\ar^{\pi}[d]   \\
 X^{[n]} \ar^{\mu}[r] & X^{(n)} 
}
\end{align}
where $\cZ\subset X^{[n]}\times X^n$ is the universal family of $\sym_n$-clusters on $X^n$.

One can easily check that $\omega_{X^n}$ is locally trivial as a $\sym_n$-sheaf. Furthermore, by \cite{Briancon}, the Hilbert--Chow morphism $\mu\colon X^{[n]}\to X^{(n)}$ is semi-small which means that \[\dim(X^{[n]}\times_{X^{(n)}} X^{[n]})=\dim X^n=2n\,.\] 
Hence, the assumptions of \autoref{thm:BKR} are satisfied which gives 
\begin{cor}
 The functor $\Phi=p_*q^*\colon \D(X^{[n]})\to \D_{\sym_n}(X^n)$ is an equivalence.
\end{cor}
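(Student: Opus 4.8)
The plan is to deduce this directly from \autoref{thm:BKR}, applied to $M = X^n$ with $G = \sym_n$ acting by permutation of the factors, using Haiman's identification $X^{[n]} \cong \Hilb^{\sym_n}(X^n)$ (under which $\mu$ corresponds to $\tau$, and hence the universal family of \eqref{diag:HC} to $\cZ$) to transport the statement back to the Hilbert scheme. Thus the task reduces to verifying the two hypotheses of \autoref{thm:BKR} in the present situation.

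First I would check that $\omega_{X^n}$ is a locally trivial $\sym_n$-sheaf. Since $\omega_{X^n}\cong \omega_X^{\boxtimes n}$, its fibre over $x=(x_1,\dots,x_n)$ is $\bigotimes_{i=1}^n\omega_X(x_i)$, and the stabiliser $(\sym_n)_x$ is the Young subgroup permuting the indices within each set of equal coordinates. A permutation fixing $x$ acts on this fibre by permuting tensor factors $\omega_X(x_i)$ that are literally equal one-dimensional vector spaces; since everything lies in cohomological degree zero there are no signs, so the action is trivial, which gives the desired local triviality. Next I would check the bound $\dim\bigl(\Hilb^{\sym_n}(X^n)\times_{X^{(n)}}\Hilb^{\sym_n}(X^n)\bigr)\le \dim X^n+1$: under Haiman's identification this fibre product is $X^{[n]}\times_{X^{(n)}}X^{[n]}$, and by the semi-smallness of $\mu$ established by Brian\c{c}on \cite{Briancon} it has dimension exactly $2n=\dim X^n$, which is certainly at most $\dim X^n+1$. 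With both hypotheses in hand, \autoref{thm:BKR} yields that $\Phi=p_*q^*$ is an equivalence of triangulated categories.

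There is no serious obstacle here: the corollary merely packages the theorems of Bridgeland--King--Reid, Haiman, and Brian\c{c}on. The only points requiring a little care are the equivariant local triviality of the canonical bundle and the bookkeeping needed to match $(\Hilb^{\sym_n}(X^n),\tau,\cZ)$ with $X^{[n]}$, $\mu$, and the universal family of $\sym_n$-clusters appearing in diagram \eqref{diag:HC}.
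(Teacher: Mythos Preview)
Your proposal is correct and follows essentially the same approach as the paper: verify the two hypotheses of \autoref{thm:BKR} (local triviality of $\omega_{X^n}$ and the dimension bound via Brian\c{c}on's semi-smallness), using Haiman's identification to match $\Hilb^{\sym_n}(X^n)$ with $X^{[n]}$. The paper states both verifications in the paragraph immediately preceding the corollary, with slightly less detail on the local triviality than you give.
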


\begin{prop}\label{prop:Psiequi}
 The functor $\Psi:= p_*^{\sym_n}q^*\colon \D_{\sym_n}(X^n)\to \D(X^{[n]})$ is an equivalence too.
\end{prop}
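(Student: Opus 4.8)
The plan is to deduce the claim from the equivalence of $\Phi$ (the preceding corollary) by identifying $\Psi$ with the quasi-inverse $\Phi^{-1}$ up to a twist by a line bundle. Since $\Phi$ is an equivalence, $\Phi^{-1}$ coincides with its left adjoint $\Phi^{L}$, which I would compute by peeling off left adjoints one functor at a time. Write $\Phi=Rp_*\circ q^*$ with $q^*$ understood as ``equip with the trivial $\sym_n$-linearisation, then pull back along the equivariant morphism $q$''. In characteristic zero the left adjoint of the trivial-linearisation functor is the invariants functor $(\_)^{\sym_n}$; the left adjoint of $Rp_*$ is $Lp^*$; and the left adjoint of the equivariant pull-back along the finite flat morphism $q$ is $Rq_*\bigl((\_)\otimes\omega_q\bigr)$, where $\omega_q:=q^!\reg_{X^{[n]}}$ is the relative dualising sheaf of $q$. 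Here one uses that $q$ is a Gorenstein morphism, so that $\omega_q$ is invertible: this holds because the universal family $\cZ$ of $\sym_n$-clusters is the isospectral Hilbert scheme, which is Gorenstein by \cite{Hai}. Altogether this gives
\[
\Phi^{-1}\;\cong\;(\_)^{\sym_n}\circ Rq_*\bigl(Lp^*(\_)\otimes\omega_q\bigr),
\]
which agrees with $\Psi=(\_)^{\sym_n}\circ Rq_*\circ Lp^*$ except for the extra twist by $\omega_q$ inside $Rq_*$.

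Next I would show that $\omega_q$ is pulled back from $X^{[n]}$, turning this inner twist into an outer one. Since $q$ is finite flat between the Cohen--Macaulay scheme $\cZ$ and the smooth variety $X^{[n]}$, both of dimension $2n$, the relation $q^!\omega_{X^{[n]}}\cong\omega_\cZ$ for dualising complexes together with the projection formula for $q^!$ gives $\omega_\cZ\cong\omega_q\otimes q^*\omega_{X^{[n]}}$. Combining this with Haiman's description of the dualising sheaf of $\cZ$ as $\omega_\cZ\cong q^*\cN$ for a $\sym_n$-equivariant line bundle $\cN$ on $X^{[n]}$ (with $\sym_n$ acting trivially on $X^{[n]}$), one obtains $\omega_q\cong q^*\cM$ with $\cM:=\cN\otimes\omega_{X^{[n]}}^{-1}$. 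The equivariant projection formula for $q$ then yields $Rq_*\bigl(Lp^*(\_)\otimes q^*\cM\bigr)\cong Rq_*\bigl(Lp^*(\_)\bigr)\otimes\cM$, and taking $\sym_n$-invariants gives $\Phi^{-1}(\_)\cong\Psi(\_)\otimes\cM$, up to precomposing with the autoequivalence $(\_)\otimes\alt$ of $\D_{\sym_n}(X^n)$ in case the linearisation of $\cM$ involves the sign character. Hence $\Psi$ is, up to the autoequivalences $(\_)\otimes\cM^{-1}$ of $\D(X^{[n]})$ and $(\_)\otimes\alt$ of $\D_{\sym_n}(X^n)$, nothing but $\Phi^{-1}$, and therefore an equivalence. (One may also argue symmetrically: the left adjoint of $\Psi$ is $\Psi^{L}\cong Rp_*\bigl(q^*(\_)\otimes\omega_p\bigr)$ with $\omega_p:=p^!\reg_{X^n}$, and using $\mu\circ q=\pi\circ p$ together with the crepancy of $\mu$ and the fact that $\pi$ is \'etale in codimension one one checks $\omega_p\cong\omega_q$, so $\Psi^{L}\cong\Phi\circ\bigl((\_)\otimes\cM\bigr)$ is an equivalence, and then so is $\Psi\cong(\Psi^L)^{-1}$.)

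The formal ingredients above — the adjunction calculus and Grothendieck--Verdier duality — are routine; the genuinely external input, which I expect to be the main point, is Haiman's result that the universal cluster $\cZ$ is Gorenstein with dualising sheaf a line bundle pulled back from $X^{[n]}$: the Gorenstein property makes $\omega_q$ invertible (needed to compute the relevant adjoints), and the pull-back property converts the inner twist by $\omega_q$ into the outer twist by $\cM$. The only other thing to watch is that the linearisation of $\cM$ may match only after a twist by $\alt$, which is harmless since $(\_)\otimes\alt$ is an autoequivalence of $\D_{\sym_n}(X^n)$; note also that $\cM$ is in general non-trivial, which is precisely why $\Psi$ is not the inverse of $\Phi$.
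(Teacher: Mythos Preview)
Your approach differs substantially from the paper's, and it contains a gap.

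The paper's proof is essentially one line: both $\Phi$ and $\Psi$ are (equivariant) Fourier--Mukai transforms with the \emph{same} kernel $\reg_\cZ\in\D_{\sym_n}(X^{[n]}\times X^n)$, just read in opposite directions, and a general fact about Fourier--Mukai transforms (\cite[Rem.~7.7]{Huy}) says that one direction is an equivalence if and only if the other is. No geometric input about $\cZ$ beyond what is already used for $\Phi$ is required; in particular, neither the Gorenstein property of $\cZ$ nor any description of $\omega_\cZ$ enters.

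Your argument instead computes $\Phi^L=\Phi^{-1}$ explicitly and tries to identify it with $\Psi$ up to line-bundle twists. The computation $\Phi^L\cong(\_)^{\sym_n}\circ Rq_*\bigl(Lp^*(\_)\otimes\omega_q\bigr)$ is fine (and already uses Haiman's Gorenstein result for $\cZ$, which the paper's proof does not need). The problem is the next step: you assert that $\omega_\cZ\cong q^*\cN$ for some line bundle $\cN$ on $X^{[n]}$, so that the inner twist by $\omega_q$ can be pulled outside via the projection formula. I do not know where in Haiman's work this is stated, and it is not automatic --- the fibres of $q$ are disconnected over the open locus of reduced subschemes, so one cannot argue that every line bundle on $\cZ$ is a pull-back. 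For $n=2$ one can check by hand that $\omega_q\cong q^*\reg_{X^{[2]}}(B/2)$ up to the sign character, but for general $n$ this would require an actual argument, not just a citation. Your parenthetical alternative has exactly the same issue: after establishing $\omega_p\cong\omega_q$ (which is fine, via crepancy), you still need $\omega_q\cong q^*\cM$ to conclude $\Psi^L\cong\Phi\circ\bigl((\_)\otimes\cM\bigr)$.

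It is worth noting that the general principle you are effectively re-deriving does \emph{not} need such a statement: the argument behind \cite[Rem.~7.7]{Huy} works at the level of kernels, using that the convolution of kernels computing $\Phi^{-1}\circ\Phi$ and $\Phi\circ\Phi^{-1}$ is symmetric under swapping the roles of source and target, so that an inverse kernel for $\Phi$ automatically serves as an inverse kernel for $\Psi$. In other words, the twist by $\omega_q$ never has to be moved to one side.
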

\begin{proof}
The equivalence $\Phi\colon \D(X^{[n]})\to \D_{\sym_n}(X^n)$ is the equivariant Fourier--Mukai transform with kernel $\reg_\cZ\in \D_{\sym_n}(X^{[n]}\times X^n)$; see \cite{Plo, KSos} for details on equivariant Fourier--Mukai transforms. In general, a Fourier--Mukai transform is an equivalence if and only if the Fourier--Mukai transform with the same kernel in the reverse direction is an equivalence (but these two equivalences are usually not inverse to each other); see \cite[Rem.\ 7.7]{Huy}. In our case, the Fourier--Mukai transform with kernel $\reg_\cZ$ in the reverse direction is $\Psi\colon \D_{\sym_n}(X^n)\to \D(X^{[n]})$.  
\end{proof}

\subsection{Combinatorial notations}

Whenever we write intervals, they are meant as subsets of the integers. Concretely, for $a,b\in \IZ$ with $a\le b$, we have $[a,b]:=\{a,a+1,\dots, b\}\subset \IZ$. Furthermore, for $n\in \IN$ a positive integer, we set $[n]:=[1,n]=\{1,\dots, n\}$. For a subset, $I\subset [n]$, we write $\sym_I\le \sym_{[n]}$ for the subgroup of permutations fixing every element of $[n]\setminus I$. Clearly, $\sym_I\cong \sym_{|I|}$. We write $\alt_I$ for the alternating representation of $\sym_I$.

\subsection{Scala's theorem}\label{subsection:Scala}

Scala \cite{Sca1,Sca2} computed the image of tautological sheaves under the McKay correspondence $\Phi\colon \D(X^{[n]})\xrightarrow\cong \D_{\sym_n}(X^n)$. We describe his result in this subsection. 

Let $F\in \Coh(X)$ be a coherent sheaf on the surface $X$. Note that the projection $\pr_1\colon X^n\to X$ to the first factor is $\sym_{n-1}$-invariant, where $\sym_{n-1}\cong \sym_{[2,n]}$ acts by permutation of the last $n-1$ factors of $X^n$. Hence, the pull-back $\pr_1^*F$ carries a canonical $\sym_{n-1}$-linearisation.
We set 
\[
\CC^0_F:=\Ind_{\sym_{n-1}}^{\sym_n}\pr_1^*F\cong \bigoplus_{i=1}^n\pr_i^*F\in \Coh_{\sym_n}(X^n)\,.
\]
For $I\subset [n]$, we define the \textit{$I$-th partial diagonal} as the reduced subvariety
\[
\Delta_I:=\bigl\{(x_1,\dots, x_n)\mid x_i=x_j\,\forall i,j\in I  \bigr\}\subset X^n\,.
\]
We have an isomorphism $\Delta\cong X\times X^{n-|I|}$ where the first factor $X$ stands for the diagonal on the $I$-components. We denote by $\iota_I\colon X\times X^{n-|I|}\hookrightarrow X^n$ the embedding of the $I$-th partial diagonal and by $p_I\colon X\times X^{n-|I|}\to X$ the projection to the first factor. Then $\iota_{I*}p_I^* F$ carries a canonical $\sym_I\times \sym_{[n]\setminus I}$-linearisation and we set
\[
 F_I:=\iota_{I*}p_I^* F\otimes \alt_I\in \Coh_{\sym_I\times \sym_{[n]\setminus I}}(X^n)
\]
and, for $1\le p\le n-1$, 
\[
 \CC^p_F:=\Ind_{\sym_{p+1}\times\sym_{n-p-1}}^{\sym_n}F_{[p+1]}\cong \bigoplus_{I\subset [n],\, |I|=p+1}F_I\in \Coh_{\sym_n}(X^n)\,. 
\]
For $I\subset J$, we have $\Delta_J\subset \Delta_I$. Hence, there are morphisms $F_I\to F_J$ given by restriction of local sections. Using appropriately alternating sums of these morphisms, we get 
$\sym_n$-equivariant differentials $d^p\colon \CC^p_F\to \CC^{p+1}_F$; see \cite[Rem.\ 2.2.2]{Sca1} for details.
Hence we have defined a complex $\CC^\bullet_F\in \D_{\sym_n}(X^n)$.
\begin{theorem}[\cite{Sca1, Sca2}]\label{thm:Scala}
For $F\in \Coh(X)$, there are functorial isomorphisms 
 $\Phi(F^{[n]})\cong \CC^\bullet_F$ in $\D_{\sym_n}(X^n)$.
\end{theorem}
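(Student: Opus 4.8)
The plan is to compute $\Phi(F^{[n]})=Rp_*q^*F^{[n]}$ by pulling the universal subscheme $\Xi$ back along $q$; this is essentially the strategy of \cite{Sca1,Sca2}, and we may restrict to the case that $F$ is a vector bundle (vector bundles generate $\D(X)$, and both $F\mapsto\Phi(F^{[n]})$ and $F\mapsto\CC^\bullet_F$ are exact functors $\D(X)\to\D_{\sym_n}(X^n)$, so an equality natural in $F$ for vector bundles extends to all coherent $F$). Form $\mathcal W:=\cZ\times_{X^{[n]}}\Xi\subset\cZ\times X$, with projections $\alpha\colon\mathcal W\to\cZ$ and $\beta\colon\mathcal W\to X$. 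Since $q\colon\cZ\to X^{[n]}$ is flat (it is a universal family) and $\pr_{X^{[n]}}\colon\Xi\to X^{[n]}$ is finite flat of degree $n$, the defining square is Tor-independent, so flat base change gives $q^*F^{[n]}\cong\alpha_*\beta^*F$ and therefore $\Phi(F^{[n]})\cong R(p\circ\alpha)_*\beta^*F$ in $\D_{\sym_n}(X^n)$. It remains to analyse the geometry of $\mathcal W$ and of the maps $p\circ\alpha$ and $\beta$.

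For $i\in[n]$ let $s_i:=\pr_i\circ p\colon\cZ\to X$ and let $\Gamma_i\subset\cZ\times X$ be its graph. Because $\cZ$ is reduced (part of Haiman's theorem) and set-theoretically each coordinate of a cluster point lies on the corresponding length-$n$ subscheme of $X$, the morphism $(q,s_i)\colon\cZ\to X^{[n]}\times X$ factors through $\Xi$, giving a closed immersion $\Gamma_i\hookrightarrow\mathcal W$ with $\alpha|_{\Gamma_i}$ an isomorphism; moreover $\mathcal W=\bigcup_{i=1}^n\Gamma_i$ scheme-theoretically. For $I\subset[n]$ put $\Gamma_I:=\bigcap_{i\in I}\Gamma_i$; via $\alpha$ it is identified with the closed subscheme $\cZ_I:=\cZ\times_{X^n}\Delta_I\subset\cZ$, and then $p\circ\alpha|_{\Gamma_I}$ is the composite $\cZ_I\to\Delta_I\hookrightarrow X^n$ while $\beta|_{\Gamma_I}$ is $\cZ_I\to\Delta_I\cong X\times X^{n-|I|}\xrightarrow{p_I}X$. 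The closed covering $\mathcal W=\bigcup_i\Gamma_i$ produces the \v{C}ech complex
\[
0\to\reg_{\mathcal W}\to\bigoplus_{|I|=1}\reg_{\Gamma_I}\to\bigoplus_{|I|=2}\reg_{\Gamma_I}\to\cdots\to\reg_{\Gamma_{[n]}}\to0,
\]
and the decisive point is that this complex is exact, i.e.\ its positive-degree part is a resolution of $\reg_{\mathcal W}$; tensoring with the locally free sheaf $\beta^*F$ keeps it a resolution of $\beta^*F$.

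Now apply $R(p\circ\alpha)_*$ to this resolution. By the identifications above, the summand $\reg_{\Gamma_I}\otimes\beta^*F$ pushes down to $\iota_{I*}\bigl(p_I^*F\otimes R(p|_{\cZ_I})_*\reg_{\cZ_I}\bigr)$, which by the partial-diagonal analogue of $Rp_*\reg_\cZ\cong\reg_{X^n}$, namely $R(p|_{\cZ_I})_*\reg_{\cZ_I}\cong\reg_{\Delta_I}$, equals $\iota_{I*}p_I^*F$. For the $\sym_n$-linearisation: equivariance of $p$ forces $\sigma$ to send $\Gamma_i$ to $\Gamma_{\sigma(i)}$, so $\bigoplus_{|I|=p+1}\reg_{\Gamma_I}$ is $\Ind_{\sym_{p+1}\times\sym_{n-p-1}}^{\sym_n}$ of the summand indexed by $[p+1]=\{1,\dots,p+1\}$; and since the \v{C}ech differential involves a choice of ordering of $I$, the group $\sym_I$ acts on that summand --- which sits in cohomological degree $|I|-1$ --- through its natural action twisted by $\alt_I$. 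Combining this with \eqref{Indcommutes}, which lets $\Ind$ pass through $R(p\circ\alpha)_*$ and through the twist by a representation, the degree-$p$ term of the complex representing $R(p\circ\alpha)_*\beta^*F$ is $\Ind_{\sym_{p+1}\times\sym_{n-p-1}}^{\sym_n}\bigl(\iota_{[p+1]*}\,p_{[p+1]}^*F\otimes\alt_{[p+1]}\bigr)=\CC^p_F$, and the differentials are the alternating restriction maps. This identifies $\Phi(F^{[n]})$ with $\CC^\bullet_F$, naturally in $F$.

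The main obstacle is the input from \cite{Hai}: the exactness of the \v{C}ech complex (which includes the scheme-theoretic equality $\mathcal W=\bigcup_i\Gamma_i$ and the required Tor-vanishing) and the cohomology-and-base-change identity $R(p|_{\cZ_I})_*\reg_{\cZ_I}\cong\reg_{\Delta_I}$. Both express that the isospectral Hilbert scheme $\cZ$ and its restrictions $\cZ_I$ over the partial diagonals are Cohen--Macaulay of the expected dimension (Haiman's polygraph theorem and its corollaries), and there is no purely formal substitute for them; by contrast, the base change of the first paragraph, the graph description of $\mathcal W$, and the equivariant bookkeeping are routine. Alternatively, one may simply take the statement from \cite{Sca1,Sca2}.
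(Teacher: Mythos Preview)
The paper does not prove this theorem; it is quoted from \cite{Sca1,Sca2} as input. Your outline is a faithful reconstruction of Scala's strategy --- the fibre product $\mathcal W=\cZ\times_{X^{[n]}}\Xi$, flat base change giving $\Phi(F^{[n]})\cong R(p\circ\alpha)_*\beta^*F$, the decomposition of $\mathcal W$ into the graphs $\Gamma_i$, and the \v{C}ech complex are precisely the ingredients, and you correctly flag that the substance lies in Haiman's theorem.

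One caution about how you package the hard step. You push the \v{C}ech complex down term by term and invoke $R(p|_{\cZ_I})_*\reg_{\cZ_I}\cong\reg_{\Delta_I}$ for each $I$, attributing this to ``$\cZ_I$ being Cohen--Macaulay of the expected dimension''. But for $|I|\ge2$ the scheme $\cZ_I=p^{-1}(\Delta_I)$ does \emph{not} have the expected dimension: for instance $\cZ_{[n]}$ fibres over the small diagonal $\Delta_{[n]}\cong X$ in punctual Hilbert schemes of dimension $n-1$, so $\dim\cZ_{[n]}=n+1$ rather than $2$, and the Tor-independence you suggest fails. The pushforward identity may well still hold (it does for $n=2$, where $\cZ_{[2]}$ is the exceptional divisor of a blow-up), but not for the reason you give. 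The formulation closer to Scala and to \autoref{rem:Postnikov} sidesteps this: one establishes the single kernel identity on $X\times X^n$, namely that the convolution $R(\beta,p\circ\alpha)_*\reg_{\mathcal W}$ is the structure sheaf $\reg_D$ of the polygraph $D=\bigcup_iD_i$, and then applies the \v{C}ech resolution $\cK^\bullet$ of $\reg_D$ on the \emph{smooth} ambient space $X\times X^n$, where the $D_i$ meet transversally and exactness is elementary. The Haiman input is then concentrated in one statement about the map $\mathcal W\to D$ rather than a family of statements indexed by $I$.
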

\begin{remark}\label{rem:Postnikov}
Note that the definition of the $\CC^p_F$ still makes perfect sense as an object in $\D_{\sym_n}(X^n)$ if we replace the sheaf $F\in \Coh(X)$ by a complex $F\in \D(X)$. Also, for $F\in \D(X)$, there are still morphisms $d^p\colon \CC^p_F\to \CC^{p+1}_F$ in $\D_{\sym_n}(X^n)$. This allows us to define $\CC^\bullet_F$, for $F\in \D(X)$, as a Postnikov system; for this notion which, roughly speaking, generalises the notion of complexes from abelian to triangulated categories, see e.g.\ \cite{Orlsurvey}. 
With this definition of $\CC^\bullet_F$, the statement of \autoref{thm:Scala} remains true for $F\in \D(X)$ instead of $F\in \Coh(X)$. Another way to phrase this is that there is an isomorphism of functors
\[
 \Phi\circ(\_)^{[n]}\cong \CC^\bullet_{(\_)}\colon \D(X)\to\D_{\sym_n}(X^n)
\]
where $\CC^\bullet_{(\_)}=\FM_{\cK^\bullet}$ is the Fourier--Mukai transform along the $\sym_n$-equivariant complex
\[
\cK^\bullet=\bigl(0\to\bigoplus_{i=1}^n \reg_{D_i} \to \bigoplus_{|I|=2}\reg_{D_I}\to \dots \to \reg_{D_{[n]}}\to 0\bigr)  
\]
on $X\times X^n$. Here, the $D_I$ are the reduced subvarieties given by $D_I:=\cap_{i\in I} D_i$ where
\[
 X^n\cong D_i=\bigl\{(x,x_1,\dots,x_n)\mid x=x_i \bigr\}\subset X\times X^n\,.
\]
\end{remark}


\section{Tautological bundles under the derived McKay correspondence}

In this section, we proof \autoref{thm:intro1}. For this purpose, we introduce various families related to tautological bundles over the Hilbert scheme and discuss their geometry.

\subsection{Various universal families and their geometry}

We define $\Xi(n,k)\subset X^{[n]}\times X^k$ as the $k$-fold fibre product 
\[
 \Xi(n,k):=\Xi\times_{X^{[n]}} \Xi\times_{X^{[n]}}\dots \times_{ X^{[n]}} \Xi\,.
\]
It is the reduced (see \cite[Sect.\ 1.4]{Sca1}) subvariety of $X^{[n]}\times X^k$ given by
\[
 \Xi(n,k)=\bigl\{(\xi, x_1,\dots, x_k)\mid x_i\in \xi \, \forall \, i=1,\dots,k  \bigr\}\,.
\]
We denote the projections by $e_k\colon \Xi(n,k)\to X^{[n]}$ and $f_k\colon \Xi(n,k)\to X^{k}$.
\begin{lemma}\label{lem:wedgetaut}
For $E\in \D(X)$ and $k\in \IN$, we have natural isomorphisms
\[\wedge^k(E^{[n]})\cong e_{k*}^{\sym_k}f_k^*(E^{\boxtimes k}\otimes \alt_k)\,.\]
\end{lemma}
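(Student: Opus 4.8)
The plan is to reduce the statement about wedge powers on $X^{[n]}$ to a statement about the universal family $\Xi$ and its fibre powers, using the fact that $\wedge^k$ of a sheaf obtained by push-forward along a finite flat morphism can be computed as an invariant part of a push-forward along the fibre power. First I would recall the general principle: if $\pi\colon Y\to S$ is finite flat of degree $n$ and $\mathcal{V}$ is a vector bundle on $Y$, then $\wedge^k(\pi_*\mathcal{V})$ can be identified with $\rho_*^{\sym_k}(\mathcal{V}^{\boxtimes_S k}\otimes\alt_k)$, where $Y\times_S\cdots\times_S Y$ carries the obvious $\sym_k$-action permuting factors and $\rho$ is the projection to $S$. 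This is essentially the ``exterior power as antisymmetrization'' identity, valid because after a faithfully flat base change $\pi$ becomes a disjoint union of $n$ copies of $S$ and both sides become the standard $\binom{n}{k}$-dimensional bundle; one checks the isomorphism is canonical hence descends. I would either cite this or include a one-line Cech/flat-descent justification.

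Next I would apply this with $Y=\Xi$, $S=X^{[n]}$, $\pi=\pr_{X^{[n]}}$ (finite flat of degree $n$ by the properties of the universal family recalled after \autoref{def:taut}), and $\mathcal{V}=\pr_X^*E$. By definition of the fibre product, $\Xi\times_{X^{[n]}}\cdots\times_{X^{[n]}}\Xi=\Xi(n,k)$, and under this identification the map $\rho$ is exactly $e_k\colon\Xi(n,k)\to X^{[n]}$. It remains to match the coefficient bundles: the $k$-fold relative box product $(\pr_X^*E)^{\boxtimes_{X^{[n]}} k}$ on $\Xi(n,k)$ is, by construction of $\Xi(n,k)$ as a subvariety of $X^{[n]}\times X^k$ and the fact that the $i$-th copy of $\pr_X$ corresponds to the $i$-th $X$-factor, canonically isomorphic to $f_k^*(E^{\boxtimes k})$, $\sym_k$-equivariantly. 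Tensoring with $\alt_k$ and taking $\sym_k$-invariants of the push-forward then gives precisely $e_{k*}^{\sym_k}f_k^*(E^{\boxtimes k}\otimes\alt_k)$, which is the right-hand side.

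For the case $E\in\D(X)$ rather than a vector bundle, I would reduce to the bundle case by resolving: the tautological functor $(\_)^{[n]}=\pr_{X^{[n]}*}\pr_X^*$ and the functor $e_{k*}^{\sym_k}f_k^*((\_)^{\boxtimes k}\otimes\alt_k)$ are both exact functors of triangulated categories (all the relevant maps being flat and finite, so no derived corrections are needed on the nose except for the $\boxtimes$, which is handled by flatness of $f_k$), and they agree on vector bundles by the above; since every object of $\D(X)$ is quasi-isomorphic to a bounded complex of vector bundles (on a smooth variety), a standard way-out argument extends the natural isomorphism. Alternatively, and perhaps more cleanly, one phrases everything Fourier--Mukai-theoretically: $\wedge^k\circ\FM_{\reg_\Xi}$ versus $\FM$ along the appropriate equivariant antisymmetrised kernel on $X\times X^k$, so that no case distinction is needed.

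The main obstacle I anticipate is the careful bookkeeping of the $\sym_k$-linearisations — in particular, checking that the natural isomorphism $(\pr_X^*E)^{\boxtimes_{X^{[n]}}k}\cong f_k^*(E^{\boxtimes k})$ really is $\sym_k$-equivariant and that the antisymmetrization sign conventions on the two sides match, so that tensoring by $\alt_k$ on the source corresponds to the $\wedge^k$ (rather than $S^k$) on the target. There is also the mild technical point that $e_k$ is proper but $\Xi(n,k)$ need not be smooth, so one should make sure the identity $\wedge^k(\pi_*\mathcal{V})\cong\rho_*^{\sym_k}(\cdots)$ is invoked only where $\pi$ is genuinely finite flat, which it is; the fibre-power $\rho=e_k$ inherits finiteness and flatness from $\pr_{X^{[n]}}$, so the push-forwards are underived and the argument goes through.
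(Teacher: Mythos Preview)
Your overall strategy is the same as the paper's: identify $(E^{[n]})^{\otimes k}$ with $e_{k*}f_k^*(E^{\boxtimes k})$ $\sym_k$-equivariantly, then take anti-invariants. The paper does this in one stroke by flat base change along the diagram
\[
\xymatrix{
 \Xi(n,k) \ar[r]  \ar_{e_k}[d] & \Xi^k\ar[d] \ar[r] & X^k  \\
 X^{[n]} \ar_{\delta}[r] & (X^{[n]})^{k} &
}
\]
obtaining $(E^{[n]})^{\otimes k}\cong \delta^*((E^{[n]})^{\boxtimes k})\cong e_{k*}f_k^*(E^{\boxtimes k})$ directly in $\D(X)$, and then anti-symmetrises. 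Compared to your plan, this buys two things: it works immediately for arbitrary $E\in\D(X)$ (no reduction to bundles, no way-out argument), and it sidesteps any auxiliary ``general principle'' lemma.

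There is, however, a genuine gap in your sketched justification of that general principle. You write that after a faithfully flat base change $\pi$ becomes a disjoint union of $n$ copies of $S$. This is only true for finite \emph{\'etale} morphisms, and $\pr_{X^{[n]}}\colon\Xi\to X^{[n]}$ is not \'etale: over the boundary divisor parametrising non-reduced subschemes the fibres are themselves non-reduced, so no fpqc base change will split $\Xi$ into disjoint sections. Your descent argument therefore does not go through as stated. The fix is precisely the base-change computation the paper uses, which proves $(\pi_*\mathcal V)^{\otimes k}\cong \rho_*(\mathcal V^{\boxtimes_S k})$ for any finite flat $\pi$ without any \'etaleness hypothesis; once you have that, taking $\sym_k$-anti-invariants gives the wedge power on the nose. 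So your plan is salvageable, but the one-line justification you propose should be replaced by (or should cite) the base-change/projection-formula argument rather than an \'etale-local splitting.
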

\begin{proof}
By flat base change along the cartesian diagram
\[
\xymatrix{
 \Xi(n,k) \ar^{\delta'}[r]  \ar_{e_k}[d]\ar@/^6mm/^{f_k}[rr] & \Xi^k\ar_{\pr_{X^{[n]}}^k}[d] \ar_{\pr_{X}^k}[r] & X^k  \\
 X^{[n]} \ar_{\delta}[r] & (X^{[n]})^{k} 
}
\]
where $\delta$ is the diagonal embedding, we get
\begin{align*}
 (E^{[n]})^{\otimes k}\cong \delta^*((E^{[n]})^{\boxtimes k})\cong \delta^*\pr_{X^{[n]}*}^k\pr_X^{k*}(E^{\boxtimes k})\cong e_{k*}f_k^*(E^{\boxtimes k})\,. 
\end{align*}
The isomorphism $(E^{[n]})^{\otimes k}\cong e_{k*}f_k^*(E^{\boxtimes k})$ is $\sym_k$-equivariant with $\sym_k$ acting on both sides by permutation of the tensor factors. Hence, taking $\sym_k$-anti-invariants on both sides gives the assertion. 
\end{proof}

In \cite{Hai}, a morphism $\Hilb^{\sym_n}(X^n)\to X^{[n]}$ (which is then shown to be an isomorphism) is constructed as follows. Let $\cZ\subset \Hilb^{\sym_n}(X^n)\times X^n$ be the universal family of $\sym_n$-clusters. One shows that $\id\times \pr_1(\cZ)\subset \Hilb^{\sym_n}\times X$ is flat of degree $n$ over $\Hilb^{\sym_n}(X^n)$ which results in the classifying morphism $\Hilb^{\sym_n}(X^n)\to X^{[n]}$. 

A posteriori, once the inverse morphism $X^{[n]}\to \Hilb^{\sym_n}(X^n)$ and hence the identification $X^{[n]}\cong \Hilb^{\sym_n}(X^n)$ is established, one can interpret this as follows. Let $\cZ\subset X^{[n]}\times X^n$ be the universal family of $\sym_n$-clusters and $\Xi\subset X^{[n]}\times X$ be the universal family of length $n$ subschemes of $X$. Then $\id\times \pr_1(\cZ)=\Xi$ and we have $\Xi\cong \cZ/\sym_{n-1}$ with $\id\times \pr_1$ being the quotient morphism.

More generally, let $k\in [n]$ and denote by $\pr_{[k]}\colon X^n\to X^k$ the projection to the first $k$ factors. We consider the closed subvariety 
\[
 \Xi\binom nk:=\id\times \pr_{[k]}(\cZ)=\{(\xi,x_1,\dots,x_k)\mid \mu(\xi)\ge x_1+\dots+x_k\}\subset X^{[n]}\times X^k
\]
where the inequality $\mu(\xi)\ge x_1+\dots +x_k$ means that every point occurs in the left-hand side with at least the same multiplicity as in the right-hand side. 
Note that $\Xi\binom n1=\Xi$.
We denote the restriction of the projection $\id\times \pr_{[k]}$ by $q_{k}\colon\cZ\to \Xi\binom nk$. 
\begin{prop}\label{prop:qkquotient}
The projection $q_k\colon \cZ\to \Xi\binom nk$ is the $\sym_{[k+1,n]}$-quotient of $\cZ$.  
\end{prop}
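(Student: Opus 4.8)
The plan is to identify $\cZ \to \Xi\binom{n}{k}$ with the quotient morphism by $\sym_{[k+1,n]}$ by checking this locally, reducing to the statement about the first $k$ coordinates of an $\sym_n$-cluster. First I would recall that by Haiman's theorem (together with the discussion immediately preceding the statement) we already know the case $k=n$ trivially — $\cZ \to \cZ$ is the quotient by the trivial group — and, more importantly, that $\cZ \to \Xi\binom{n}{1} = \Xi$ is the $\sym_{[2,n]}$-quotient morphism. So there is a hierarchy of projections $\cZ \to \Xi\binom{n}{k} \to \Xi\binom{n}{1}$, and the content is to interpolate. Since $\Xi\binom{n}{k}$ is by definition the (reduced) image $\id\times\pr_{[k]}(\cZ)$, the map $q_k$ is automatically surjective and finite, so the issue is purely that $q_k$ realizes the \emph{categorical/geometric} quotient, i.e.\ that $\reg_{\Xi\binom{n}{k}} \to (q_{k*}\reg_\cZ)^{\sym_{[k+1,n]}}$ is an isomorphism and that $\sym_{[k+1,n]}$ permutes the fibres transitively.

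The key steps, in order: (1) Observe $\sym_{[k+1,n]}$ acts on $\cZ$ over $\Xi\binom{n}{k}$, since $\id\times\pr_{[k]}$ is invariant under permuting the last $n-k$ factors of $X^n$; hence $q_k$ factors through $\cZ/\sym_{[k+1,n]}$, and it suffices to show this induced map $\cZ/\sym_{[k+1,n]} \to \Xi\binom{n}{k}$ is an isomorphism. (2) Both sides are finite over $X^{[n]}$, so it is enough to check the isomorphism fibrewise over a point $[\xi]\in X^{[n]}$, i.e.\ to understand the fibre of $\cZ$. By Haiman/Nakamura-type descriptions, the fibre of $\cZ$ over $[\xi]$, as a subscheme of $X^n$, is the "$\sym_n$-cluster" $Z_\xi$ whose coordinate ring is $\IC[\sym_n]$ as a representation; concretely, if $\xi$ has support at distinct points $y_1,\dots,y_r$ with multiplicities $n_1,\dots,n_r$, then $Z_\xi$ is the union of partial diagonals corresponding to ordered set partitions of $[n]$ into blocks of sizes $n_1,\dots,n_r$, glued scheme-theoretically. (3) Now the fibre of $\Xi\binom{n}{k}$ over $[\xi]$ is the image of $Z_\xi$ under $\pr_{[k]}\colon X^n\to X^k$, and one checks directly that $\sym_{[k+1,n]}$ acting on $Z_\xi$ has quotient exactly this image — the orbits of $\sym_{[k+1,n]}$ on (the points of) $Z_\xi$ are precisely the fibres of $\pr_{[k]}$ restricted to $Z_\xi$, and at the scheme level the invariants of the coordinate ring recover the coordinate ring of the image. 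This is where the regular-representation structure of $\reg(Z_\xi)$ is used: taking $\sym_{[k+1,n]}$-invariants of $\IC[\sym_n]$ gives $\IC[\sym_n/\sym_{[k+1,n]}] = \IC[\sym_n]^{\sym_{[k+1,n]}}$, which matches the length of the expected image.

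I expect the main obstacle to be the scheme-theoretic (as opposed to set-theoretic) verification at non-reduced clusters — showing that $\reg_{\Xi\binom{n}{k}}$, which is \emph{defined} as a reduced subscheme, actually equals $(q_{k*}\reg_\cZ)^{\sym_{[k+1,n]}}$ rather than just having the same support. One clean way around this: argue that $q_{k*}^{\sym_{[k+1,n]}}\reg_\cZ$ is flat (even locally free) over $X^{[n]}$ of the correct rank, reducing to fibrewise computation, and then invoke reducedness of $\Xi\binom{n}{k}$ together with the fact that a finite surjection of reduced schemes inducing a bijection on points over each fibre, from a scheme whose pushforward-of-invariants is locally free of matching rank, must be the normalization/quotient. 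Alternatively, one can factor $q_k$ as a composition of "one step at a time" quotients $\cZ \to \cZ/\sym_{\{n\}} \to \cZ/\sym_{\{n-1,n\}} \to \cdots$, at each stage using the already-established $k=1$ case applied to a relative Hilbert scheme, i.e.\ bootstrapping from \autoref{prop:Psiequi}'s underlying geometry and the cited results of \cite{Hai}. I would pursue whichever of these the author has set up machinery for; the flatness route seems most self-contained given the tools in the excerpt.
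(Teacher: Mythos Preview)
Your reduction to a fibrewise statement is exactly right, and you correctly flag the scheme-theoretic verification as the crux. But the two workarounds you propose both have gaps. For the flatness route: yes, $(q_*\reg_\cZ)^{\sym_{n-k}}$ is locally free of rank $n!/(n-k)!$ over $X^{[n]}$, and yes, $\reg_{\Xi\binom nk}$ injects into it (reducedness). But a finite surjection of reduced schemes that is bijective on points need not be an isomorphism (think normalization of a cusp), so ``bijection on points plus matching rank'' does not close the argument without knowing something like normality of $\Xi\binom nk$, which you have not established and which is not lighter than the original claim. The bootstrapping route via relative Hilbert schemes is too vague to evaluate; there is no evident relative structure on $\Xi\binom nk \to \Xi\binom n{k-1}$ that lets you invoke the $k=1$ case directly.

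The paper's proof avoids all of this with a short direct algebraic argument, and this is the idea you are missing. Working with a single $\sym_n$-cluster $V(J)\subset(\IA^2)^n$, one must show that the inclusion $\IC[x_1,y_1,\dots,x_k,y_k]/(J\cap\IC[x_1,y_1,\dots,x_k,y_k])\hookrightarrow(\IC[x_1,y_1,\dots,x_n,y_n]/J)^{\sym_{n-k}}$ is surjective. By Weyl's theorem, the target is generated as an algebra over the source by the power sums $p_{a,b}(x_{k+1},y_{k+1},\dots,x_n,y_n)=\sum_{i>k}x_i^ay_i^b$. Now the full power sum $p_{a,b}(x_1,y_1,\dots,x_n,y_n)$ is $\sym_n$-invariant, hence congruent to a constant $c_{a,b}$ modulo $J$ (this is where the regular-representation hypothesis enters: the $\sym_n$-invariants of $\IC[\sym_n]$ are one-dimensional). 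Therefore $p_{a,b}(x_{k+1},\dots,y_n)\equiv c_{a,b}-p_{a,b}(x_1,\dots,y_k)\pmod J$ lies in the image, and surjectivity follows. Your dimension count $\dim\IC[\sym_n]^{\sym_{n-k}}=n!/(n-k)!$ is correct but only bounds the source from above; it is this power-sum trick that supplies the matching lower bound.
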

Since $\cZ$ is the universal family of $\sym_n$-clusters, the proposition follows from this
\begin{lemma}
Let $J\subset \IC[x_1,y_1\dots,x_n,y_n]$ be the vanishing ideal of a $\sym_n$-cluster on $(\IA^2)^n$. Then, for $k\le n$, the inclusion $\IC[x_1,y_1,\dots, x_k,y_k]\hookrightarrow \IC[x_1,y_1,\dots,x_n,y_n]$ induces an isomorphism  
\[
\IC[x_1,y_1,\dots,x_k,y_k]/(J\cap \IC[x_1,y_1,\dots,x_k,y_k])\xrightarrow\cong (\IC[x_1,y_1,\dots,x_n,y_n]/J)^{\sym_{n-k}}\,. 
\]
\end{lemma}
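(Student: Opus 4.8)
The plan is to reduce everything to the single non-formal input that the ring of $\sym_n$-invariants of $R:=\IC[x_1,y_1,\dots,x_n,y_n]/J$ is one-dimensional, which is immediate from the definition of a $\sym_n$-cluster. First I would record that the map in the statement is, by construction, the inclusion $A:=\IC[x_1,\dots,y_k]/(J\cap\IC[x_1,\dots,y_k])=\im(\IC[x_1,\dots,y_k]\to R)\hookrightarrow R$; since $x_1,y_1,\dots,x_k,y_k$ are fixed by $\sym_{n-k}=\sym_{[k+1,n]}$ (the permutations of the last $n-k$ coordinate pairs), its image $A$ lies in $R^{\sym_{n-k}}$, so the map is simply the inclusion $A\subseteq R^{\sym_{n-k}}$. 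Thus injectivity is automatic and the whole content is the reverse inclusion $R^{\sym_{n-k}}\subseteq A$.

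For that I would use two facts. Because $J$ defines a $\sym_n$-cluster, $R\cong\IC[\sym_n]$ as a $\sym_n$-representation, so the trivial representation occurs once and $R^{\sym_n}=\IC\cdot 1$. And because we work in characteristic zero, taking $\sym_{n-k}$-invariants is exact, so the surjection $B:=\IC[x_1,\dots,y_n]\twoheadrightarrow R$ restricts to a surjection $B^{\sym_{n-k}}\twoheadrightarrow R^{\sym_{n-k}}$; hence it suffices to show $\im(B^{\sym_{n-k}}\to R)\subseteq A$. Writing $B=\IC[x_1,\dots,y_k]\otimes_\IC\IC[x_{k+1},\dots,y_n]$ with $\sym_{n-k}$ acting only on the second factor, one gets $B^{\sym_{n-k}}=\IC[x_1,\dots,y_k]\otimes_\IC\IC[x_{k+1},\dots,y_n]^{\sym_{n-k}}$, so (since $A$ is a subring) it is enough to check that the image in $R$ of each generator of $\IC[x_{k+1},\dots,y_n]^{\sym_{n-k}}$ already lies in $A$. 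By classical invariant theory of the symmetric group in characteristic zero, that invariant ring is generated by the polarized power sums $q_{a,b}:=\sum_{j=k+1}^n x_j^a y_j^b$.

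The key observation is then that the full power sum $P_{a,b}:=\sum_{j=1}^n x_j^a y_j^b$ is $\sym_n$-invariant, so its image in $R$ lies in $R^{\sym_n}=\IC\cdot 1$, i.e.\ is a scalar; and since $P_{a,b}=\bigl(\sum_{j=1}^k x_j^a y_j^b\bigr)+q_{a,b}$ with the first summand in $\IC[x_1,\dots,y_k]$, the image of $q_{a,b}$ in $R$ is a scalar minus an element of $A$, hence lies in $A$. This yields $\im(B^{\sym_{n-k}}\to R)\subseteq A$ and therefore $R^{\sym_{n-k}}=A$, as desired.

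The one step that needs care — and the only place I would appeal to something nonelementary — is the assertion that $\IC[x_{k+1},\dots,y_n]^{\sym_{n-k}}$ is generated by the polarized power sums $q_{a,b}$; this is the classical structure theorem for multisymmetric functions over a field of characteristic zero, and is where positivity of the characteristic would genuinely matter. Everything else is formal: in particular I would not need flatness of the universal family $\cZ$, Haiman's polygraph theorem, or any semicontinuity/deformation argument. The alternative route, through the equality $\dim_\IC A=\dim_\IC R^{\sym_{n-k}}=n!/(n-k)!$, looks strictly more laborious.
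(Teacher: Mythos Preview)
Your proposal is correct and follows essentially the same argument as the paper: both use that $R^{\sym_n}=\IC\cdot 1$ to force the full power sums $P_{a,b}$ to be scalars modulo $J$, then invoke Weyl's theorem that the polarized power sums generate the multisymmetric invariants to conclude that every $\sym_{n-k}$-invariant comes from $\IC[x_1,\dots,y_k]$. Your write-up is slightly more explicit about the formal reductions (exactness of $(\_)^{\sym_{n-k}}$, the tensor decomposition of $B^{\sym_{n-k}}$), but the substance is identical.
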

\begin{proof}
For $k=1$, the proof can be found in \cite[Sect.\ 4]{Haigeometry} and it works the same for arbitrary $k$. We reproduce the argument for completeness sake. 
For a finite set of variables $\{x_i,y_i\}_{i\in I}$ and non-negative integers $h,k\in\IN$, we consider the $\sym_I$-invariant power sum polynomial 
\[
 p_{h,k}(\{x_i,y_i\}_{i\in I})=\sum_{i\in I} x_i^h y_i^k\,.
\]
By definition of a $\sym_n$-cluster, $\IC[x_1,y_1,\dots,x_n,y_n]/J$ is given by the regular representation. In particular, its $\sym_n$-invariants are one-dimensional. Hence, every power sum polynomial $p_{h,k}(x_1,y_1,\dots,x_n,y_n)$ is congruent to a constant $c_{h,k}$ modulo $J$. By a theorem of Weyl, $\IC[x_1,y_1,\dots,x_n,y_n]^{\sym_{n-k}}$ is generated by the power sums $p_{h,k}(x_{k+1},y_{k+1},\dots,x_n,y_n)$, as a $\IC[x_1,y_1,\dots, x_k,y_k]$-algebra; see e.g.\ \cite[Thm.\ 1.2]{Dalbec}. We have
\begin{align*}
p_{h,k}(x_{k+1}, y_{k+1},\dots, x_n,y_n)&=p_{h,k}(x_1,y_1,\dots,x_n,y_n)- p_{h,k}(x_1,y_1,\dots,x_k,y_k)\\&\equiv 
c_{h,k}- p_{h,k}(x_1,y_1,\dots,x_k,y_k)\quad\mod J\,.
\end{align*}
This shows that the inclusion     
\[
\IC[x_1,y_1,\dots,x_k,y_k]/(J\cap \IC[x_1,y_1,\dots,x_k,y_k])\to (\IC[x_1,y_1,\dots,x_n,y_n]/J)^{\sym_{n-k}}
\]
is also surjective.
\end{proof}

\subsection{Tautological objects under the derived McKay correspondence}\label{sec:tautBKRH}

\begin{definition}\label{def:CW}
We consider the functor 
\[
 \CC:=\Ind_{\sym_{n-1}}^{\sym_n} \pr_1^*\colon \D(X)\to \D_{\sym_n}(X^n)\,.
\]
Furthermore, for $1\le k \le n$ and $F\in \D(X)$, we set
\[
 \WW^k(F):=\Ind_{\sym_k\times \sym_{n-k}}^{\sym_n}(\pr_{[k]}^*(L^{\boxtimes k}\otimes \alt_k))\cong \bigoplus_{\substack{I\subset [n]\\ |I|=k}} \pr_I^* (L^{\boxtimes k})\otimes \alt_I\,\in \D_{\sym_n}(X^n)\,.
\]
Note that $\CC(F)\cong\CC^0_F\cong  \WW^1(F)$. We also set $\WW^0(F):=\reg_{X^n}$ (the structure sheaf equipped with the canonical linearisation). 
\end{definition}

We denote by $e_k'\colon \Xi\binom nk\to X^{[n]}$ and $f_k'\colon \Xi\binom nk\to X^{[n]}$ the projections. 

\begin{prop}\label{prop:Psi}
There are isomorphisms, functorial in $F\in \D(X)$, 
\[
 \Psi(\WW^k(F))\cong  e'^{\sym_k}_{k*}f'^*_k(F^{\boxtimes k}\otimes \alt_k) \,.
\]
\end{prop}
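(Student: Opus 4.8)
The statement to prove is $\Psi(\WW^k(F)) \cong e'^{\sym_k}_{k*}f'^*_k(F^{\boxtimes k}\otimes \alt_k)$, where $\Psi = p_*^{\sym_n} q^*$ and $\WW^k(F) = \Ind_{\sym_k\times\sym_{n-k}}^{\sym_n}(\pr_{[k]}^*(F^{\boxtimes k}\otimes\alt_k))$. The natural strategy is to unwind the definition of $\Psi$ applied to the induced sheaf and reduce, step by step, the equivariant pushforward-pullback composition over $\cZ$ to one over $\Xi\binom nk$. First I would use the fact that induction commutes with pullback (the isomorphisms \eqref{Indcommutes}) to write $q^*\WW^k(F) \cong \Ind_{\sym_k\times\sym_{n-k}}^{\sym_n} q^*\pr_{[k]}^*(F^{\boxtimes k}\otimes\alt_k)$; note $\pr_{[k]}\circ p$ restricted to $\cZ$ factors through $q_k\colon \cZ\to\Xi\binom nk$ followed by $f'_k$. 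So $q^*\WW^k(F) = q^*p^*\pr_{[k]}^*(\cdots)$ on the induced piece, which equals $q_k^*f'^*_k(F^{\boxtimes k}\otimes\alt_k)$ (up to the sign twist $\alt_k$, which is pulled back from the base).

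**Key reductions.** The crucial input is \autoref{prop:qkquotient}: $q_k\colon\cZ\to\Xi\binom nk$ is the $\sym_{[k+1,n]}$-quotient morphism. Then $p^{\sym_n}_* q^* \WW^k(F)$ should be computed by factoring the equivariant pushforward $p_* = e'_{k*}\circ q_{k*}$ (since $p$ restricted to $\cZ$, composed appropriately, goes $\cZ\xrightarrow{q_k}\Xi\binom nk\xrightarrow{e'_k}X^{[n]}$ — here I should double-check that $p$ is $q_{k}$ followed by the map to $X^{[n]}$; actually $p\colon\cZ\to X^n$ and the relevant map to $X^{[n]}$ is $q\colon\cZ\to X^{[n]}$, which factors as $e'_k\circ q_k$). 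So $\Psi(\WW^k(F)) = (\,\cdot\,)^{\sym_n}\, q_* \Ind_{\sym_k\times\sym_{n-k}}^{\sym_n} q_k^* f'^*_k(F^{\boxtimes k}\otimes\alt_k)$. Using \autoref{lem:globalFrob} (that $(\,\cdot\,)^{G}\Ind_U^G \cong (\,\cdot\,)^U$) together with the commutation of induction and pushforward, this collapses to $(\,\cdot\,)^{\sym_k\times\sym_{n-k}}\,q_*\, q_k^* f'^*_k(F^{\boxtimes k}\otimes\alt_k)$. Now push $q_{k*}$ through first: $q_{k*}q_k^*(\,\cdot\,)\cong (\,\cdot\,)$ on $\sym_{[k+1,n]}$-invariants — precisely, $q_{k*}^{\sym_{[k+1,n]}} q_k^* \cong \id$ since $q_k$ is that quotient map, using \autoref{lem:quotientff}-type reasoning (or the projection formula, noting $f'^*_k(F^{\boxtimes k}\otimes\alt_k)$ carries the trivial $\sym_{[k+1,n]}$-action as it is pulled back from a space with trivial action). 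Taking the remaining $\sym_k$-invariants via $e'_{k*}$ yields $e'^{\sym_k}_{k*}f'^*_k(F^{\boxtimes k}\otimes\alt_k)$, as desired.

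**The main obstacle.** The bookkeeping of which group acts on what, and in particular verifying that all the invariants-versus-pushforward commutations \eqref{invacommutes} apply (they require the relevant groups to act trivially on the target spaces, which holds here since $X^{[n]}$ and $\Xi\binom nk$ carry no residual symmetric-group action), is the delicate part. More substantively, the key geometric fact that $q_{k*}q_k^*$ is the identity on $\sym_{[k+1,n]}$-invariants requires $\Xi\binom nk$ (equivalently $X^{[n]}$) to be such that $q_k$ is genuinely the quotient morphism with $q_{k*}^{\sym_{[k+1,n]}}\reg_\cZ \cong \reg_{\Xi\binom nk}$; this is \autoref{prop:qkquotient}, but one must be careful that $\cZ$ is not smooth and so invoke the projection formula rather than \autoref{lem:quotientff} directly, after checking that $f'^*_k(F^{\boxtimes k}\otimes\alt_k)$ is suitably flat/perfect over $\Xi\binom nk$ (it is, being a pullback of a bounded complex along a flat-enough map, or one works with $F$ a bundle first and extends). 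Finally, I would record that all isomorphisms are functorial in $F$ because every functor used (induction, equivariant pull/push, invariants) is, and the base-change and adjunction isomorphisms are natural. Once \autoref{prop:Psi} is in hand, combining it with \autoref{lem:wedgetaut} (for $F=L$ a line bundle, $\Xi\binom nk$ relates to $\Xi(n,k)$) will give \autoref{thm:intro1}(ii), and the $k=1$ case gives part (i) via $\Xi\binom n1 = \Xi$.
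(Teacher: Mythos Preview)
Your approach is essentially identical to the paper's: set up the commutative diagram factoring $\pr_{[k]}\circ p = f'_k\circ q_k$ and $q = e'_k\circ q_k$, then chain together the commutation of $\Ind$ with pullback/pushforward, \autoref{lem:globalFrob} to collapse $(\_)^{\sym_n}\Ind$ to $(\_)^{\sym_k\times\sym_{n-k}}$, the commutation \eqref{invacommutes}, and finally \autoref{prop:qkquotient} with \autoref{lem:quotientff} to kill the $\sym_{n-k}$-factor via $q_{k*}^{\sym_{n-k}}q_k^*\cong\id$. The only blemish is that you write $\Psi = p_*^{\sym_n}q^*$ when in fact $\Psi = q_*^{\sym_n}p^*$ (you catch and correct this midway); once the roles of $p$ and $q$ are straightened out your chain of isomorphisms is exactly the paper's, and your caution about perfectness of $f_k'^*(F^{\boxtimes k}\otimes\alt_k)$ is well-placed but harmless since it is the pullback of a perfect complex from the smooth $X^k$.
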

\begin{proof}
We consider the commutative diagram
\[
\begin{xy}
\xymatrix{
 \cZ \ar^p[r]  \ar^{q_k}[d] \ar@/_6mm/_q[dd]& X^n\ar^{\pr_{[k]}}[d]   \\
 \Xi\binom nk \ar^{f'_k}[r]\ar^{e'_k}[d] & X^k  \\
X^{[n]} & .
}
\end{xy} 
\]
We get
\begin{align*}
 \Psi(\WW^k(F))&\cong q_*^{\sym_n}p^*\Ind_{\sym_k\times \sym_{n-k}}^{\sym_n} \pr_{[k]}^*(F^{\boxtimes k}\otimes \alt_k)\\
&\overset{\eqref{Indcommutes}}{\cong} (\_)^{\sym_n}\Ind_{\sym_k\times \sym_{n-k}}^{\sym_n}e'_{k*}q_{k*}q_k^*f_k'^*(F^{\boxtimes k}\otimes \alt_k)\\
&\overset{\ref{lem:globalFrob}}{\cong} (\_)^{\sym_k\times \sym_{n-k}} e'_{k*}q_{k*}q_k^*f_k'^*(F^{\boxtimes k}\otimes \alt_k)\\
&\overset{\eqref{invacommutes}}{\cong}  e'^{\sym_k}_{k*}q^{\sym_{n-k}}_{k*}q_k^*f_k'^*(F^{\boxtimes k}\otimes \alt_k)\\
&\overset{\ref{lem:quotientff}+ \ref{prop:qkquotient}}{\cong}  e'^{\sym_k}_{k*}f_k'^*(F^{\boxtimes k}\otimes \alt_k)\,.\qedhere
\end{align*}
 \end{proof}

\begin{theorem}\label{thm:PsiCC}
There is an isomorphism of functors $(\_)^{[n]}\cong \Psi\circ \CC$.
\end{theorem}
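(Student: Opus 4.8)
The plan is to deduce \autoref{thm:PsiCC} as the special case $k=1$ of \autoref{prop:Psi}. Recall from \autoref{def:CW} that $\CC(F)\cong\WW^1(F)$, so \autoref{prop:Psi} with $k=1$ gives
\[
 \Psi(\CC(F))\cong \Psi(\WW^1(F))\cong e'^{\sym_1}_{1*}f'^*_1(F^{\boxtimes 1}\otimes\alt_1)\cong e'_{1*}f'^*_1 F\,,
\]
where we used that $\sym_1$ is the trivial group, so both the invariants $(\_)^{\sym_1}$ and the twist by $\alt_1$ are the identity, and $F^{\boxtimes 1}=F$. Thus it remains to identify $e'_{1*}f'^*_1 F$ with $F^{[n]}$.

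First I would observe that $\Xi\binom n1 = \Xi$, the universal family of length-$n$ subschemes of $X$, as already noted right after the definition of $\Xi\binom nk$ (indeed $\Xi\binom n1 = \id\times\pr_{[1]}(\cZ) = \id\times\pr_1(\cZ) = \Xi$). Under this identification the two projections $e'_1\colon \Xi\binom n1\to X^{[n]}$ and $f'_1\colon \Xi\binom n1\to X^1=X$ become precisely the structure projections $\pr_{X^{[n]}}\colon \Xi\to X^{[n]}$ and $\pr_X\colon \Xi\to X$ from the universal family. Hence $e'_{1*}f'^*_1 F\cong \pr_{X^{[n]}*}\pr_X^* F = F^{[n]}$ by \autoref{def:taut}. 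The functoriality in $F\in\D(X)$ is inherited from the functoriality asserted in \autoref{prop:Psi}. Putting the chain of isomorphisms together yields the isomorphism of functors $(\_)^{[n]}\cong \Psi\circ\CC$.

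The only genuinely non-routine point is matching the projections $e'_1, f'_1$ with $\pr_{X^{[n]}},\pr_X$, i.e.\ making the identification $\Xi\binom n1\cong\Xi$ compatible with all the structure maps; but this is exactly the content of Haiman's construction recalled in the paragraph preceding \autoref{prop:qkquotient} (the case $k=1$: $\id\times\pr_1(\cZ)=\Xi$ with $\id\times\pr_1$ the $\sym_{n-1}$-quotient map), specialized from the general $k$ used in \autoref{prop:Psi}. So no new geometric input is needed beyond what has already been set up. Everything else is a bookkeeping simplification coming from $\sym_1=\{e\}$.
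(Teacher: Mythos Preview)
Your proof is correct and follows exactly the paper's approach: the paper's own proof is the single sentence ``Since $\Xi\binom n1=\Xi$, this is the case $k=1$ of the previous proposition,'' and you have simply spelled out the routine identifications that make this work.
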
 
\begin{proof}
Since $\Xi\binom n1=\Xi$, this is the case $k=1$ of the previous proposition.
\end{proof}

Using the language of equivariant Fourier--Mukai transforms, we can write \autoref{lem:wedgetaut} and \autoref{prop:Psi} as 
\[
\wedge^k(F^{[n]})\cong \FM_{\reg_{\Xi(n,k)}}(F^{\boxtimes k}\otimes  \alt_k)^{\sym_k} \quad\text{and} \quad \Psi(\WW^k(F))\cong \FM_{\reg_{\Xi\binom nk}}(F^{\boxtimes k}\otimes  \alt_k)^{\sym_k}\,,  
\]
respectively. Since $\Xi\binom nk\subset \Xi(n,k)$ we get a morphism $\reg_{\Xi(n,k)}\to \reg_{\Xi\binom nk}$ between the Fourier--Mukai kernels given by restriction of local regular functions.
\begin{lemma}\label{lem:antiinva}
The restriction map $\reg_{\Xi(n,k)}\to \reg_{\Xi\binom nk}$ induces an isomorphism on the level of anti-invariants
\[
e_{k*}(\reg_{\Xi(n,k)}\otimes\alt_k)^{\sym_k}\cong e'_{k*}(\reg_{\Xi\binom nk}\otimes\alt_k)^{\sym_k}\,. 
\]
\end{lemma}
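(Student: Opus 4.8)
The plan is to reduce the claimed isomorphism to a statement about the two subvarieties $\Xi\binom nk\subset\Xi(n,k)$ over a point of $X^{[n]}$, and then to verify it fibrewise (or rather, locally on $X^{[n]}$) using Haiman's description of $\sym_n$-clusters. Both $e_k$ and $e'_k$ are finite morphisms, so $e_{k*}$ is exact and it suffices to show that the restriction map $\reg_{\Xi(n,k)}\to\reg_{\Xi\binom nk}$ becomes an isomorphism after applying $(\_\otimes\alt_k)^{\sym_k}$, as a map of $\reg_{X^{[n]}}$-modules on $X^{[n]}$ (after pushing forward by the finite maps). The key point is that the relevant anti-invariants see only the ``distinct points'' part of the fibre, where the two schemes agree.

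First I would unwind the geometry. Over the open locus $U\subset X^{[n]}$ where the universal subscheme $\Xi$ is étale of degree $n$, the fibre of $\Xi(n,k)\to X^{[n]}$ is the set of all $k$-tuples $(x_1,\dots,x_k)$ with $x_i\in\xi$ (repetitions allowed), while the fibre of $\Xi\binom nk\to X^{[n]}$ consists of those $k$-tuples with $x_i$ pairwise distinct --- this is exactly $\id\times\pr_{[k]}(\cZ)$, using that over $U$ the $\sym_n$-cluster $\cZ_\xi$ is the reduced free orbit. Thus over $U$ the map $\reg_{\Xi(n,k)}\to\reg_{\Xi\binom nk}$ is the restriction to the ``pairwise-distinct'' diagonal complement, and the $\sym_k$-anti-invariants of functions on the full $k$-tuple space are supported away from every diagonal $x_i=x_j$; hence the map is an isomorphism on anti-invariants over $U$. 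Since $U$ is dense and both sides are torsion-free (indeed locally free, as $e_k,e'_k$ are finite over the smooth $X^{[n]}$ and the sheaves in question are direct summands of $e_{k*}\reg$, $e'_{k*}\reg$), an isomorphism over the dense open $U$ forces injectivity everywhere; the content is surjectivity at the boundary.

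For surjectivity I would argue locally on $X^{[n]}$, and by the standard reduction pass to $X=\IA^2$ and to the formal/local situation around a point of $X^{[n]}$, where one has the vanishing ideal $J\subset\IC[x_1,y_1,\dots,x_n,y_n]$ of a $\sym_n$-cluster, as in the lemma proved just above. Writing $R=\IC[x_1,y_1,\dots,x_n,y_n]/J$, the algebra $\reg_{\Xi(n,k)}$ corresponds (over the relevant base) to $R^{\sym_{[k+1,n]}}$ --- wait, more precisely: $\Xi(n,k)$ is the $k$-fold fibre product of $\Xi=\cZ/\sym_{n-1}$, and its coordinate ring pushed to $X^{[n]}$ is the $k$-fold tensor power $(R^{\sym_{n-1}})^{\otimes k}$ over $\reg_{X^{[n]}}$, on which $\sym_k$ permutes the factors; whereas $\reg_{\Xi\binom nk}$ is, by \autoref{prop:qkquotient}, $R^{\sym_{[k+1,n]}}$. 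The restriction map is the multiplication/co-diagonal map sending $a_1\otimes\cdots\otimes a_k$ to $a_1\cdots a_k$ after reindexing each factor into its own block of variables. Taking $\alt_k$-anti-invariants of the source yields the span of ``Vandermonde-type'' anti-symmetrised products $\sum_{\sigma\in\sym_k}\sgn(\sigma)\,\sigma(a_1\otimes\cdots\otimes a_k)$; I would then show, using Weyl's theorem on vector invariants (exactly the input used in the lemma above, that the power sums $p_{h,\ell}$ in the ``tail'' variables are congruent mod $J$ to constants minus power sums in the block variables), that these anti-symmetrised products already generate the $\alt_k$-anti-invariants of $R^{\sym_{[k+1,n]}}$ as an $\reg_{X^{[n]}}=R^{\sym_n}$-module. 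Concretely, an element of $(R^{\sym_{[k+1,n]}}\otimes\alt_k)^{\sym_k}$ is anti-invariant for $\sym_k$ acting on the first $k$ blocks and invariant for $\sym_{[k+1,n]}$; using that $R^{\sym_{[k+1,n]}}$ is generated over $R^{\sym_n}$ by symmetric functions of the last $n-k$ blocks, and that these are congruent mod $J$ to universal polynomials in the first $k$ blocks, one rewrites any such element as a polynomial in the first $k$ block-variables alone, i.e.\ as coming from $(R^{\sym_{n-1}})^{\otimes k}$, and anti-symmetrisation over $\sym_k$ then exhibits it in the image.

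The main obstacle I expect is precisely this last algebraic surjectivity step: one must control the anti-invariants carefully, because a priori elements of $R^{\sym_{[k+1,n]}}$ involve the ``tail'' variables $x_{k+1},y_{k+1},\dots,x_n,y_n$ in a symmetric but nontrivial way, and one needs the cluster relations (via Weyl/Dalbec as cited in the preceding lemma) to eliminate them in favour of the first $k$ blocks before the $\sym_k$-anti-symmetrisation can be applied. Once that elimination is in place the identification of images is formal. I would organise the write-up as: (i) reduce to a local/affine statement and identify the two sheaves with $(R^{\sym_{n-1}})^{\otimes_{R^{\sym_n}}k}$ and $R^{\sym_{[k+1,n]}}$ and the map with the co-multiplication; (ii) injectivity via the dense étale locus $U$ and torsion-freeness; (iii) surjectivity on $\alt_k$-anti-invariants via the Weyl-invariant argument of the previous lemma, noting the $\alt_k$-twist kills all diagonal contributions so no information is lost.
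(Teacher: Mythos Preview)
Your approach would ultimately work, but it is far more elaborate than necessary, and you have the difficulty of the two directions reversed. The paper's proof is a three-line geometric argument: since $\cZ$ is irreducible of dimension $2n$ (Haiman), so is its quotient $\Xi\binom nk$, and hence $\Xi\binom nk$ is an irreducible component of the (equidimensional, reduced) $\Xi(n,k)$. Surjectivity of the restriction map on anti-invariants is then \emph{automatic}: restriction to a closed subscheme is surjective, $e_k$ is finite so $e_{k*}$ is exact, and taking $\sym_k$-anti-invariants is an exact direct-summand functor in characteristic zero. There is no need for Weyl's theorem, elimination of tail variables, or any of the algebra from the preceding lemma.

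The only content is injectivity, and for this the paper observes that every point $p=(\xi,x_1,\dots,x_k)$ in the complement $\Xi(n,k)\setminus\Xi\binom nk$ has $x_i=x_j$ for some $i\neq j$; the transposition $\tau=(i\,\,j)$ then fixes $p$, so any $\sym_k$-anti-invariant function satisfies $f(p)=-f(p)=0$. Thus anti-invariants vanish on the other components, and since $\Xi(n,k)$ is reduced they are determined by their restriction to $\Xi\binom nk$. Your injectivity argument via the \'etale locus and torsion-freeness is correct, but it requires checking local freeness of both pushforwards, whereas the paper's pointwise stabiliser argument needs only reducedness. In short: your ``main obstacle'' (surjectivity) is a non-issue, and the step you regard as easy (injectivity) is where the actual idea lives --- and that idea is much simpler than the algebraic machinery you propose.
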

\begin{proof}
The universal family $\cZ\subset X^{[n]}\times X^n$ is irreducible of dimension $2n$; see \cite[Prop.\ 3.3.2]{Hai}. Thus, the same holds for the quotient $\Xi\binom nk\cong\cZ/\sym_{n-k}$. Hence, $\Xi\binom nk$ is an irreducible component of $\Xi(n,k)$ as the latter is also finite over $X^{[n]}$ and consequently of dimension $2n$. Since both, $\Xi\binom nk$ and $\Xi(n,k)$, are reduced, it is sufficient to show that every $\sym_k$-anti-invariant function $f\in \reg_{\Xi(n,k)}$ vanishes on the complement of $\Xi\binom nk$. 

Every point $p\in\Xi(n,k)\setminus \Xi\binom nk$ is of the form $p=(\xi,x_1,\dots,x_k)$ where $x_i=x_j$ for at least one pair $1\le i<j\le n$. Set $\tau=(i\,\,j)\in \sym_k$. Then $\tau(p)=p$. It follows that every $\sym_k$-anti-invariant function satisfies $f(p)=-f(p)$, hence $f(p)=0$. 
\end{proof}

\begin{cor}\label{cor:wedgeFM}For $L\in \Pic X$ and $0\le k\le n$, we have
 \[\wedge^k(L^{[n]})\cong \FM_{\reg_{\Xi\binom nk}}(L^{\boxtimes k}\otimes \alt_k)^{\sym_k}\cong {e'_k}_{*}^{\sym_k}{f'_k}^{*}(L^{\boxtimes k}\otimes \alt_k)\,.\] 
\end{cor}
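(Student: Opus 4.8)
The plan is to combine \autoref{lem:wedgetaut}, \autoref{prop:Psi}, and \autoref{lem:antiinva} in the obvious way. First, I would write down the Fourier--Mukai reformulations already recorded in the text: \autoref{lem:wedgetaut} gives $\wedge^k(L^{[n]})\cong e_{k*}^{\sym_k}f_k^*(L^{\boxtimes k}\otimes\alt_k)\cong \FM_{\reg_{\Xi(n,k)}}(L^{\boxtimes k}\otimes\alt_k)^{\sym_k}$, and \autoref{prop:Psi} (with $F=L$) gives $\Psi(\WW^k(L))\cong e'^{\sym_k}_{k*}f'^*_k(L^{\boxtimes k}\otimes\alt_k)\cong \FM_{\reg_{\Xi\binom nk}}(L^{\boxtimes k}\otimes\alt_k)^{\sym_k}$. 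So the content of the corollary is precisely that these two $\sym_k$-anti-invariant Fourier--Mukai transforms agree, and that the second one computes $\Psi(\WW^k(L))$.

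The key step is to compare the two Fourier--Mukai kernels. Since $\Xi\binom nk\subset\Xi(n,k)$ is a closed embedding, restriction of local functions gives a surjection $\reg_{\Xi(n,k)}\epi\reg_{\Xi\binom nk}$ of $\sym_k$-equivariant sheaves on $X^{[n]}\times X^k$. Pulling back along $f'_k$ (respectively $f_k$), tensoring with $L^{\boxtimes k}\otimes\alt_k$, pushing forward along $e_k$, and taking $\sym_k$-anti-invariants, \autoref{lem:antiinva} tells us that the induced map $e_{k*}(\reg_{\Xi(n,k)}\otimes\alt_k)^{\sym_k}\to e'_{k*}(\reg_{\Xi\binom nk}\otimes\alt_k)^{\sym_k}$ is an isomorphism. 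Because $L$ is a line bundle, hence flat, tensoring the kernel with $f'^*_k(L^{\boxtimes k})=f^*_k(L^{\boxtimes k})$ (the two pullbacks agree via the inclusion) is exact and commutes with the formation of anti-invariants and with $e_{k*}$ up to the projection formula, so the isomorphism of \autoref{lem:antiinva} upgrades to the claimed isomorphism $\FM_{\reg_{\Xi(n,k)}}(L^{\boxtimes k}\otimes\alt_k)^{\sym_k}\cong\FM_{\reg_{\Xi\binom nk}}(L^{\boxtimes k}\otimes\alt_k)^{\sym_k}$. Chaining this with the two reformulations above yields $\wedge^k(L^{[n]})\cong\Psi(\WW^k(L))\cong e'^{\sym_k}_{k*}f'^*_k(L^{\boxtimes k}\otimes\alt_k)$.

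The cases $k=0$ and $k=n$ should be dealt with by hand or noted to be trivial: for $k=0$ both sides are $\reg_{X^{[n]}}$ (using $\WW^0(L)=\reg_{X^n}$ and \autoref{lem:quotientff}), and for $k=n$ the fibre product $\Xi\binom nn$ equals $\cZ/\sym_0=\cZ$ so that \autoref{prop:Psi} applies directly. I expect the only genuine subtlety to be the bookkeeping in passing from the sheaf-level isomorphism of \autoref{lem:antiinva} to the derived/Fourier--Mukai statement, i.e.\ checking that tensoring the kernels by the flat sheaf $f^*_k(L^{\boxtimes k})$ and then taking $e_{k*}$ and $\sym_k$-anti-invariants is compatible with the restriction map; but since $L$ is a line bundle this is the routine projection-formula argument and carries no real obstacle. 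Everything else is a formal concatenation of results already proved in this section.
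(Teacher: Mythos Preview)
Your core argument is correct and matches the paper's one-line proof: \autoref{lem:wedgetaut} gives $\wedge^k(L^{[n]})\cong \FM_{\reg_{\Xi(n,k)}}(L^{\boxtimes k}\otimes\alt_k)^{\sym_k}$, and then \autoref{lem:antiinva} together with the projection formula (using that $L$ is a line bundle) replaces the kernel $\reg_{\Xi(n,k)}$ by $\reg_{\Xi\binom nk}$. That is exactly what the paper does.

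However, you have misread the statement slightly: the corollary does \emph{not} assert anything about $\Psi(\WW^k(L))$. The identification $\Psi(\WW^k(L))\cong e'^{\sym_k}_{k*}f'^*_k(L^{\boxtimes k}\otimes\alt_k)$ via \autoref{prop:Psi} is irrelevant here; it is only in the subsequent \autoref{thm:wedgeline} that one combines \autoref{prop:Psi} with the present corollary to obtain $\Psi(\WW^k(L))\cong\wedge^k(L^{[n]})$. So your detour through $\Psi$ is unnecessary for this corollary (though harmless), and your ``chain'' in fact proves both the corollary and the theorem that follows it in one go.
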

\begin{proof}
 This follows from \autoref{lem:wedgetaut} together with \autoref{lem:antiinva}. 
\end{proof}

\begin{theorem}\label{thm:wedgeline}
For $L\in \Pic(X)$ and $0\le k\le n$, we have $\Psi(\WW^k(L))\cong \wedge ^k(L^{[n]})$.  
\end{theorem}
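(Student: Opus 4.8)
The plan is to combine the two formulae already available: on the one hand, \autoref{prop:Psi} with $F=L$ gives
\[
\Psi(\WW^k(L))\cong {e'_k}_{*}^{\sym_k}{f'_k}^{*}(L^{\boxtimes k}\otimes \alt_k)\,,
\]
and on the other hand \autoref{cor:wedgeFM} states precisely that ${e'_k}_{*}^{\sym_k}{f'_k}^{*}(L^{\boxtimes k}\otimes \alt_k)\cong \wedge^k(L^{[n]})$. Concatenating these two isomorphisms yields $\Psi(\WW^k(L))\cong \wedge^k(L^{[n]})$, and the functoriality in $L$ (as a morphism of functors on $\Pic X$, or at least a natural isomorphism compatible with pullbacks along isomorphisms) is inherited from the functoriality of both ingredients. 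The case $k=0$ is the statement $\Psi(\reg_{X^n})\cong \reg_{X^{[n]}}$, which one checks separately — it is essentially \autoref{lem:quotientff} applied to $\pi\colon X^n\to X^{(n)}$ together with $\mu^*\reg_{X^{(n)}}\cong\reg_{X^{[n]}}$, or one simply notes that both $\WW^0$-side and the $\wedge^0$-side equal the structure sheaf and that $\Psi$ sends the trivial-linearisation structure sheaf on $X^n$ to $\reg_{X^{[n]}}$ by the same computation as in \autoref{prop:Psi} with $k=0$.

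The one point that genuinely needs attention — and which is where the hypothesis $L\in\Pic X$ (rather than an arbitrary sheaf or complex) is used — is \autoref{lem:antiinva}, which underlies \autoref{cor:wedgeFM}: the natural map $\reg_{\Xi(n,k)}\to \reg_{\Xi\binom nk}$ is in general \emph{not} an isomorphism (the fibre product $\Xi(n,k)$ can be non-reduced or reducible, with extra components supported on the loci where some $x_i=x_j$), but it \emph{does} become an isomorphism after applying $e_{k*}(\,\cdot\,\otimes\alt_k)^{\sym_k}$, because the $\sym_k$-anti-invariant functions are forced to vanish along exactly those diagonal loci. This is the step I expect to be the main obstacle, and it is already carried out in the excerpt: one needs that $\Xi\binom nk$ is irreducible of dimension $2n$ (from irreducibility of $\cZ$, \cite[Prop.\ 3.3.2]{Hai}, and \autoref{prop:qkquotient}), hence is the unique component of $\Xi(n,k)$ dominating $X^{[n]}$, and then the transposition argument $f(p)=-f(p)$ on the complementary locus. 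Tensoring by the line bundle $L^{\boxtimes k}$ is harmless since it is locally free, which is why the statement is phrased for $L\in\Pic X$ and why \autoref{lem:wedgetaut} (valid for all $E\in\D(X)$) is invoked only through its specialisation in \autoref{cor:wedgeFM}.

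So the write-up is short: cite \autoref{prop:Psi} with $F=L$ for the left isomorphism, cite \autoref{cor:wedgeFM} for the right, observe that the $k=0$ case is immediate, and remark that everything is functorial. I would phrase the proof essentially as: ``For $0\le k\le n$, \autoref{prop:Psi} gives $\Psi(\WW^k(L))\cong {e'_k}_{*}^{\sym_k}{f'_k}^{*}(L^{\boxtimes k}\otimes\alt_k)$, and by \autoref{cor:wedgeFM} the right-hand side is isomorphic to $\wedge^k(L^{[n]})$; these isomorphisms are functorial in $L$, and for $k=0$ both sides equal $\reg_{X^{[n]}}$.'' No further computation is required.
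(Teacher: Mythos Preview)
Your proposal is correct and matches the paper's proof exactly: the paper's argument is the single sentence ``This follows from \autoref{prop:Psi} together with \autoref{cor:wedgeFM}.'' Your additional remarks on the $k=0$ case and on where the line-bundle hypothesis enters (via \autoref{lem:antiinva}) are accurate elaborations, but the core argument is identical.
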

\begin{proof}
 This follows from \autoref{prop:Psi} together with \autoref{cor:wedgeFM}. 
\end{proof}

\begin{remark}
 The $k=0$ case of \autoref{thm:wedgeline} says that $\Psi(\reg_{X^n})\cong \reg_{X^{[n]}}$. The $k=n$ case gives 
 $\Psi(\reg_{X^n}\otimes \alt_n)\cong \det \reg^{[n]}\cong \reg(-B/2)$ where $B\subset X^{[n]}$ is the effective divisor parametrising non-reduced length $n$ subschemes of $X$. 
\end{remark}

\begin{remark}\label{rem:DM}
Using the commutative diagram \eqref{diag:HC}, one can easily show that the functor $\Psi\colon \D_{\sym_n}(X^n)\to \D(X^{[n]})$ is $\reg_{X^{(n)}}$-linear, which means that, for $A\in \Dperf(X^{(n)})$ and $B\in \D_{\sym_n}(X^n)$, we have functorial isomorphisms $\Psi(\pi^*A\otimes B)\cong\mu^*(A)\otimes \Psi(B)$. Hence, \autoref{thm:PsiCC} and \autoref{thm:wedgeline} actually give descriptions for the image of objects under the derived McKay correspondence for a much larger class than only tautological objects and wedge powers of tautological bundles. In particular, let $M\in \Pic X$ be a line bundle. Then the equivariant line bundle $M^{\boxtimes n}\in \Pic_{\sym_n}(X^n)$ descends to the line bundle $M^{(n)}:=\pi_*^{\sym_n}(M^{\boxtimes n})\in \Pic X^{(n)}$. Set $\cD_M:=\mu^*(M^{(n)})$. Then, for $L\in \Pic X$ and $F\in \D(X)$, we get
\[
 \Psi(\CC(F)\otimes M^{\boxtimes n})\cong F^{[n]}\otimes \cD_M\quad, \quad \Psi(\WW^k(L)\otimes M^{\boxtimes n})\cong \wedge^k(L^{[n]})\otimes \cD_M\,. 
\]
\end{remark}

\begin{remark}\label{rem:Heisenberginterpretation}
For $Y$ a smooth projective variety, the equivariant derived category $\D_{\sym_n}(Y^n)$ has some interesting features. For example, if $\D(Y)$ carries a (full, strong) exceptional collection one can construct a (full, strong) exceptional collection on $\D_{\sym_n}(Y^n)$. The same holds for semi-orthogonal decompositions and tilting bundles; see \cite[Sect.\ 4]{KSos}. Furthermore, there is always an action of a Heisenberg algebra on the category $\D_{\sym_n}(Y^n)$; see \cite{CL} and \cite{KrugHeisenberg}. If $Y=X$ is a surface, there is the McKay equivalence $\D(X^{[n]})\cong \D_{\sym_n}(X^n)$. Hence, abstractly, we know that the derived category $\D(X^{[n]})$ carries all the above features too. However, it would be interesting to understand the constructions (of exceptional sequences, the Heisenberg action, etc.)\ concretely in geometric terms on the Hilbert scheme. \autoref{thm:wedgeline} can be seen as a step into that direction as the objects $\WW^k(E)$ play an important role in all these constructions on $\D_{\sym_n}(X^n)$.  
\end{remark}

\begin{remark}
 For $n=2$, the autoequivalence $\Phi\Psi$ of $\D_{\sym_n}(X^n)$ can be computed as 
\begin{align}\label{PhiPsi}
 \Phi\Psi\cong \MM_{\alt} \TT^{-1}_{\delta_*} \MM_\alt\,;
\end{align}
see \cite[Sect.\ 4.6]{KPScyclic}.
Here, $\MM_\alt\in \Aut(\D_{\sym_2}(X^2))$ is the tensor product by the non-trivial character of $\sym_2$ and $\TT_{\delta_*}\in \Aut(\D_{\sym_2}(X^2))$ is the twist along the push-forward along the diagonal $\delta_*\colon\D(X)\to \D_{\sym_2}(X^2)$ which is a spherical functor; see \cite{Kru3}. Concretely, 
\[
 \TT^{-1}_{\delta_*}=\cone\bigl(\id \xrightarrow\eps \delta_*\delta^*(\_)^{\sym_2}\bigr)
\]
as a cone of Fourier--Mukai transforms where $\eps$ is the unit of adjunction which, in this case, is given by restriction of sections to the diagonal followed by projection to the invariants. Using this, one can compute directly that, for $n=2$, we have $\Phi\Psi(\CC(F))\cong \CC^\bullet_F$ which explains the difference between \autoref{thm:Scala} and \autoref{thm:PsiCC}. Conversely, for $n>2$, the difference between $\CC^\bullet_F$ and $\CC(F)$ allows to guess how the autoequivalence $\Phi\Psi\in \Aut(\D_{\sym_n}(X^n))$ could look like in general. Namely, one can hope that \eqref{PhiPsi} still holds with $\delta_*$ replaced by a spherical functor $\D_{\sym_{n-2}}(X\times X^{n-2})\to \D_{\sym_n}(X^n)$ whose image is supported on the big diagonal in $X^n$; see \cite[Sect.\ 5.9]{Kru4} for some speculation on these kind of spherical functors.       
\end{remark}

\begin{remark}\label{rem:Kummer}
Let $X=A$ be an abelian surface. Then there is the \textit{generalised Kummer variety}, which is the subvariety 
$K_{n-1}A\subset A^{[n]}$ parametrising length $n$ subschemes of $A$ whose weighted support adds up to $0\in A$. 
It is smooth of dimension $2(n-1)$. We also consider the $\sym_n$-invariant subvariety
\[
 N_{n-1}A=\{(a_1,\dots, a_n)\mid a_1+\dots+a_n=0\}\subset A^n\,.
\]
We denote the embeddings by $i\colon K_{n-1}A\hookrightarrow A^{[n]}$ and $j\colon N_{n-1}A\hookrightarrow A^n$, respectively.
The equivalence $\Psi\colon \D_{\sym_n}(A^n)\xrightarrow\cong \D(A^{[n]})$ restricts to a functor $\Psi_K\colon \D_{\sym_n}(N_{n-1}A)\xrightarrow\cong \D(K_{n-1}A)$ which is again an equivalence and satisfies 
\begin{align}\label{eq:Psicommutes}
\Psi_K j^*\cong i^*\Psi\,.
\end{align}
This follows from \cite[Lem.\ 6.1 \& Prop.\ 6.2]{Chenflops}, see also \cite[Lem.\ 6.2]{Mea} for details of the analogous argument for $\Phi\colon \D(A^{[n]})\to \D_{\sym_n}(A^n)$ instead of $\Psi$. Again, we get a tautological functor $K_{n-1}\colon \D(A)\to \D(K_{n-1}A)$ by means of the Fourier--Mukai transform along the universal family of the generalised Kummer variety. It satisfies $K_{n-1}\cong i^*(\_)^{[n]}$. Hence, using    
\eqref{eq:Psicommutes}, \autoref{thm:PsiCC}, and \autoref{thm:wedgeline}, we get an isomorphism of functors $K_{n-1}\cong \Psi_K j^* \CC$ and  
\[
 \wedge^k(K_{n-1}(L))\cong \Psi_K(j^*\WW^k(L))\quad\text{for $L\in \Pic A$ and $0\le k\le n-1$.}
\]
\end{remark}

\begin{remark}
Let $F\colon \D(X)\to \D(X^{[n]})$ be the Fourier--Mukai transform along the universal ideal sheaf $\cI_\Xi$ (recall that, in contrast, the tautological functor $(\_)^{[n]}$ is the Fourier--Mukai transform along $\reg_\Xi$). If $X$ is a K3 surface, $F$ is a $\IP^{n-1}$-functor which means in particular that the composition with its right-adjoint is given by
\[
 F^RF\cong \id_{\D(X)}\oplus [-2]\oplus [-4]\oplus\dots\oplus [-2(n-1)]\,;
\]
see \cite{Add} where this result is proved using incidence subschemes of the products $X^{[n]}\times X^{[n+1]}$.
Shorter proofs using the derived McKay correspondence and Scala's result \autoref{thm:Scala} were given in \cite{MMdefK3,Mea}. These proofs can be simplified further using \autoref{thm:PsiCC} instead of \autoref{thm:Scala}. Furthermore, \autoref{thm:PsiCC} explains the occurrence of the 'truncated universal ideal functors' of \cite[Sect.\ 5]{KSos}.
\end{remark}

\section{Extension groups}

Using the results from the previous section, we can derive various formula for the cohomologies and extension groups of tautological objects.
For this we need the following general formula for the graded Hom-spaces in the equivariant derived category $\D_{\sym_n}(X^n)$ between the objects given by the constructions of \autoref{def:CW}.

\begin{prop}\label{prop:WExt}
Let $E,F\in \D(X)$ and $0\le e,f\le n$. Then 
\begin{align*}
&\Hom^*_G(\WW^e(E), \WW^f(F))\\\cong &\bigoplus_{i=\max \{0,e+f-n\}}^{\min\{e,f\}}S^i\Hom^*(E,F)\otimes \wedge^{e-i}\Ho^*(E^\vee)\otimes \wedge^{f-i}\Ho^*(F)\otimes S^{n+i-e-f}\Ho^*(\reg_X)\,. 
\end{align*}
\end{prop}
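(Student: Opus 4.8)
The plan is to reduce the computation of $\Hom^*_{\sym_n}(\WW^e(E),\WW^f(F))$ to a purely combinatorial bookkeeping of diagonals inside $X^n$, using the concrete description $\WW^e(E)=\bigoplus_{|I|=e}\pr_I^*(E^{\boxtimes e})\otimes\alt_I$ from \autoref{def:CW}. First I would use equation \eqref{GinvaHom} to write $\Hom^*_{\sym_n}(\WW^e(E),\WW^f(F))\cong \Hom^*(\WW^e(E),\WW^f(F))^{\sym_n}$, and then by \autoref{rem:Danila} (Danila's lemma), since the $\sym_n$-linearisation on $\WW^f(F)=\bigoplus_{|J|=f}\pr_J^*(F^{\boxtimes f})\otimes\alt_J$ permutes the summands transitively with stabiliser $\sym_J\times\sym_{[n]\setminus J}$ at $J=[f]$, we get
\[
\Hom^*_{\sym_n}(\WW^e(E),\WW^f(F))\cong\Bigl(\bigoplus_{|I|=e}\Hom^*\bigl(\pr_I^*(E^{\boxtimes e})\otimes\alt_I,\;\pr_{[f]}^*(F^{\boxtimes f})\otimes\alt_{[f]}\bigr)\Bigr)^{\sym_f\times\sym_{n-f}}.
\]
So the problem becomes: for each subset $I$ with $|I|=e$, compute $\Hom^*$ between two external-box-times sheaves supported (after pullback) on the coordinate subspaces indexed by $I$ and by $[f]$, then organise the direct sum over $I$ according to the orbits of $\sym_f\times\sym_{n-f}$ on $\{I:|I|=e\}$, and finally take invariants with the sign twists $\alt_I\otimes\alt_{[f]}$ taken into account.

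The key computational input is a Künneth-type formula: writing $i=|I\cap[f]|$, the sheaf $\pr_I^*(E^{\boxtimes e})$ involves the factor $X$ in coordinates $I$ and is pulled back trivially from the other coordinates; similarly $\pr_{[f]}^*(F^{\boxtimes f})$ uses coordinates $[f]$. Decomposing $[n]=(I\cap[f])\sqcup(I\setminus[f])\sqcup([f]\setminus I)\sqcup([n]\setminus(I\cup[f]))$ of sizes $i$, $e-i$, $f-i$, $n-e-f+i$ respectively, the global $\Hom^*$ on $X^n$ factors as a tensor product over these four blocks: on the $i$ shared coordinates one gets $\Hom^*(E,F)^{\otimes i}$; on the $e-i$ coordinates appearing only in the source one gets $\Ho^*(X,E^\vee)^{\otimes(e-i)}$ (from $\Hom^*(\pr^*E,\reg)=\Ho^*(E^\vee)$ per factor — here I need $E$ a vector bundle, or work with $E^\vee$ in the derived sense); on the $f-i$ coordinates appearing only in the target one gets $\Ho^*(X,F)^{\otimes(f-i)}$; and on the remaining $n-e-f+i$ coordinates one gets $\Ho^*(X,\reg_X)^{\otimes(n-e-f+i)}$. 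This requires $e+f-n\le i\le\min\{e,f\}$, which is exactly the index range in the statement. (I would either cite or quickly prove this external Künneth identity; it is standard for products of smooth varieties with $\Db$ of coherent sheaves.)

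The main obstacle, and the part requiring genuine care, is the invariant-taking step: one must show that summing over all $I$ with $|I\cap[f]|=i$ fixed and then applying $(\_)^{\sym_f\times\sym_{n-f}}$ together with the sign characters $\alt$ produces precisely the symmetric powers $S^i$ on the shared block, $S^{n+i-e-f}$ on the common-complement block, and the \emph{exterior} powers $\wedge^{e-i}$ and $\wedge^{f-i}$ on the two remaining blocks. The exterior powers arise from the $\alt$-twists: the group $\sym_{e-i}$ (resp.\ $\sym_{f-i}$) permuting the source-only (resp.\ target-only) coordinates acts on $\Ho^*(E^\vee)^{\otimes(e-i)}$ (resp.\ $\Ho^*(F)^{\otimes(f-i)}$) by permutation twisted by sign, whose invariants are $\wedge^{e-i}\Ho^*(E^\vee)$ (resp.\ $\wedge^{f-i}\Ho^*(F)$) — being careful with the Koszul sign rule since $\Ho^*$ is graded; the $\sym_i$ and $\sym_{n-e-f+i}$ actions are untwisted, giving symmetric powers. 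One also has to check the bookkeeping that the double coset count of $I$'s with given $i$, modulo $\sym_f\times\sym_{n-f}$, contributes exactly once after taking invariants — i.e.\ that $\sym_f\times\sym_{n-f}$ acts transitively on $\{I:|I|=e,\,|I\cap[f]|=i\}$ with stabiliser $\sym_i\times\sym_{f-i}\times\sym_{e-i}\times\sym_{n-e-f+i}$, so that invariants under $\sym_f\times\sym_{n-f}$ of the big direct sum reduce to invariants under this product stabiliser of a single Künneth term. Assembling these four blocks and summing over $i$ in the stated range yields the formula.

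\begin{remark}
When $E$ is merely a complex rather than a vector bundle, the factor $\Ho^*(E^\vee)$ should be read as $\Hom^*(E,\reg_X)$, i.e.\ the hypercohomology of the derived dual; the argument is unchanged since $\Hom^*_{X}(\pr^*E,\reg)\cong\Hom^*_X(E,\reg)$ by the projection formula on each factor.
\end{remark}
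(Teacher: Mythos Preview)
Your proposal is correct and follows essentially the same approach as the paper's own proof: both use \eqref{GinvaHom} and \autoref{rem:Danila} to reduce to a K\"unneth computation on orbit representatives, then track the $\alt$-twists on the four coordinate blocks to obtain the correct symmetric and exterior powers. The only cosmetic difference is that the paper applies Danila's lemma once to the $\sym_n$-action on the index set of pairs $(I,J)$ (with representatives $P_i=([e],[i]\cup[e+1,e+f-i])$), whereas you first fix $J=[f]$ via Frobenius reciprocity and then decompose over the $\sym_f\times\sym_{n-f}$-orbits of $I$; both routes land on the same stabiliser $\sym_i\times\sym_{e-i}\times\sym_{f-i}\times\sym_{n-e-f+i}$ and the same K\"unneth term.
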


\begin{proof}
The proof is an application of \autoref{rem:Danila} together with \eqref{GinvaHom}.
Recall that, for the underlying non-equivariant objects, we have
\[
 \WW^e(E)\cong \bigoplus_{I\subset [n],\, |I|=e} \pr_I^*E^{\boxtimes e}\,;
\]
see \autoref{def:CW}.
Hence,  
\[
 \Hom^*(\WW^e(E), \WW^f(F))\cong\bigoplus_{(I,J)\in \cI}\Hom^*(\pr_I^*E^{\boxtimes e}, \pr_J^*F^{\boxtimes f})
\]
where $\cI=\{(I,J)\mid I,J\subset [n], |I|=e, |J|=f\}$. The linearisations of $\WW^e(E)$ and $\WW^f(F)$ induce on $\cI$ the action $\sigma\cdot(I,J)=(\sigma(I), \sigma(J))$. A set of representatives of the orbits under this action is given by 
the pairs of the form \[P_i:=([e], [i]\cup [e+1, e+f-i])\quad\text{for} \quad i\in [\max\{0, e+f-n\}, \min\{e,f\}]\,.\] The stabiliser of $P_i$ is $G_i:=\sym_i\times \sym_{e-i}\times \sym_{f-i}\times \sym_{n-e-f+i}\le \sym_n$. By the K\"unneth formula, we see that $\Hom^*(\pr_{[e]}^*E^{\boxtimes e}, \pr_{[i]\cup [e+1, e+f-i]}^*F^{\boxtimes f})$, as a $G_i$-representation, is given by
\begin{align*}
 &\Hom^*\bigl(\pr_{[e]}^*(E^{\boxtimes e}\otimes \alt_e), \pr_{[i]\cup [e+1, e+f-i]}^*(F^{\boxtimes f}\otimes \alt_f)\bigr)\\\cong &\Hom^*(E,F)^{\otimes i}\otimes (\Ho^*(E^\vee)^{\otimes e-i}\otimes \alt_{e-i}) \otimes (\Ho^*(F)^{\otimes f-i}\otimes \alt_{f-i})\otimes\Ho^*(\reg_X)^{\otimes n+i-e-f}\,. 
\end{align*}
Hence, its $G_i$-invariants are given by
\[S^i\Hom^*(E,F)\otimes \wedge^{e-i}\Ho^*(E^\vee)\otimes \wedge^{f-i}\Ho^*(F)\otimes S^{n+i-e-f}\Ho^*(\reg_X)\]
and \autoref{rem:Danila} together with \eqref{GinvaHom} give the result.  
\end{proof}

\begin{cor}\label{cor:Extformulae} For $E,F\in \D(X)$, $K,L\in \Pic(X)$ and $k,\ell\in [n]$, we have natural isomorphisms
\begin{align}
\Ho^*(F^{[n]})&\cong \Ho^*(F)\otimes S^{n-1}\Ho^*(\reg_X)\,,\label{cohF}
\\
\Ho^*((E^{[n]})^\vee)&\cong \Ho^*(E^\vee)\otimes S^{n-1}\Ho^*(\reg_X)\,,\label{cohEvee}
\\
\Hom^*(E^{[n]},F^{[n]})&\cong \begin{aligned}& \Hom^*(E,F) \otimes S^{n-1}\Ho^*(\reg_X)\label{ExtEF}
\\&\oplus \Ho^*(E^\vee)\otimes \Ho^*(F)\otimes S^{n-2}\Ho^*(\reg_X)\,, 
\end{aligned}
\\
\Ho^*(\wedge^kL^{[n]})&\cong \wedge^k\Ho^*(L)\otimes S^{n-k}\Ho^*(\reg_X)\,,\label{cohwedge}
\\
\Hom^*(E^{[n]}, \wedge^k L^{[n]})&\cong\begin{aligned}& \Hom^*(E,L)\otimes \wedge^{k-1}\Ho^*(L) \otimes S^{n-k}\Ho^*(\reg_X)\label{ExtEwedge}
\\&\oplus \Ho^*(E^\vee)\otimes \wedge^k\Ho^*(L)\otimes S^{n-k-1}\Ho^*(\reg_X)\,. 
\end{aligned}
\\
\Hom^*(\wedge^k L^{[n]}, F^{[n]})&\cong\begin{aligned}& \Hom^*(L,F)\otimes \wedge^{k-1}\Ho^*(L^\vee) \otimes S^{n-k}\Ho^*(\reg_X) \label{ExtwedgeF}
\\&\oplus \wedge^k\Ho^*(L^\vee)\otimes \Ho^*(F)\otimes S^{n-k-1}\Ho^*(\reg_X)\,,
\end{aligned}
\end{align}
\begin{align}
\begin{aligned}\label{Extwedgewedge}
&\Hom^*(\wedge^k K^{[n]}, \wedge^\ell L^{[n]})\\\cong& \bigoplus_{i=\max \{0,k+\ell-n\}}^{\min\{k,\ell\}}S^i\Hom^*(K,L)\otimes \wedge^{k-i}\Ho^*(K^\vee)\otimes \wedge^{\ell-i}\Ho^*(L)\otimes S^{n+i-k-\ell}\Ho^*(\reg_X)\,.
\end{aligned}
\end{align}
\end{cor}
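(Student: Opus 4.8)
The plan is to derive Corollary~\ref{cor:Extformulae} from Proposition~\ref{prop:WExt} together with the identifications of tautological objects under the equivalences $\Phi$ and $\Psi$. The key point is that an equivalence preserves graded Hom-spaces, so that for $A,B\in\D(X^{[n]})$ we may compute $\Hom^*(A,B)$ after applying either $\Phi$ or $\Psi^{-1}$, whichever is more convenient. By \autoref{thm:wedgeline} we have $\wedge^k L^{[n]}\cong \Psi(\WW^k(L))$ for $L\in\Pic X$, and since $\Psi$ is an equivalence (\autoref{prop:Psiequi}) this immediately gives
\[
\Hom^*(\wedge^k K^{[n]},\wedge^\ell L^{[n]})\cong \Hom^*_{\sym_n}(\WW^k(K),\WW^\ell(L))\,,
\]
so that \eqref{Extwedgewedge} is precisely the case $E=K$, $F=L$, $e=k$, $f=\ell$ of \autoref{prop:WExt}, with the extra input that $\Hom^*(K,L)$, $\Ho^*(K^\vee)$ and $\Ho^*(L)$ are the expected sheaf-cohomology/Hom groups on $X$ because all objects are line bundles; this last point is a triviality once one has reduced to the equivariant side.

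Next I would read off the special cases. Setting $k=0$ (so $\WW^0(L)=\reg_{X^n}$, whose invariant cohomology is $S^n\Ho^*(\reg_X)$... more precisely one takes $E=\reg_X$) recovers \eqref{cohF} and, dualising, \eqref{cohEvee}; setting $\ell=0$ or $k=\ell=1$ in \eqref{Extwedgewedge} (or directly in \autoref{prop:WExt} with $e=f=1$, where the sum has the two terms $i=0,1$) gives \eqref{ExtEF}; taking $e=1$, $f=k$ gives the two-term formula \eqref{ExtEwedge}; and taking $e=k$, $f=1$ gives \eqref{ExtwedgeF}. The formula \eqref{cohwedge} is the case $e=0$, $f=k$ (equivalently $E=\reg_X$, $F=L$, and using $S^0=\wedge^0=\IC$). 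In each case the bounds $\max\{0,e+f-n\}\le i\le\min\{e,f\}$ degenerate to the stated range. One subtlety is that \autoref{prop:WExt} is stated for $\WW^e(E)$ with $E\in\D(X)$ arbitrary but a priori only describes $\Hom^*_{\sym_n}(\WW^e(E),\WW^f(F))$; to apply it to $E^{[n]}$ for $E\in\D(X)$ not necessarily a line bundle, I would invoke \autoref{thm:PsiCC}, which gives $E^{[n]}\cong\Psi(\CC(E))=\Psi(\WW^1(E))$ for \emph{any} $E\in\D(X)$, so that formulae \eqref{cohF}, \eqref{cohEvee}, \eqref{ExtEF}, \eqref{ExtEwedge}, \eqref{ExtwedgeF} hold with $E,F$ general complexes and only $K,L$ required to be line bundles.

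The only genuine content beyond citing \autoref{prop:WExt} is the bookkeeping of the functoriality and the $\vee$'s. For naturality in $E,F,K,L$ one simply notes that all the isomorphisms involved --- \autoref{thm:PsiCC}, \autoref{thm:wedgeline}, and the equivalence property of $\Psi$ --- are functorial, and \autoref{prop:WExt} is proved functorially (its proof is via \autoref{rem:Danila} and \eqref{GinvaHom}, both functorial). For the appearance of $\Ho^*(E^\vee)$ rather than $\Ho^*(E)^\vee$: in \autoref{prop:WExt} the term $\wedge^{e-i}\Ho^*(E^\vee)$ arises from $\Hom^*(\pr_I^* E^{\boxtimes e},\pr_J^* F^{\boxtimes f})$ on those tensor slots where an $E$-factor meets an $\reg_X$-factor, contributing $\Hom^*(E,\reg_X)=\Ho^*(E^\vee)$ --- so nothing further is needed. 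I expect the main (modest) obstacle to be purely organisational: correctly matching each of the seven displayed formulae to the right specialisation of $(e,f)$ and confirming that the sum in \eqref{Extwedgewedge} collapses as claimed in each case; there is no analytic or geometric difficulty remaining once \autoref{prop:WExt}, \autoref{thm:PsiCC}, and \autoref{thm:wedgeline} are in hand.
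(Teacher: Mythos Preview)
Your proposal is correct and follows essentially the same route as the paper: use that $\Psi$ is an equivalence (\autoref{prop:Psiequi}) to transport graded Hom-spaces, identify $E^{[n]}\cong\Psi(\WW^1(E))$ via \autoref{thm:PsiCC} and $\wedge^k L^{[n]}\cong\Psi(\WW^k(L))$ via \autoref{thm:wedgeline}, and then read each formula off as the appropriate specialisation of \autoref{prop:WExt}. The only cosmetic difference is that the paper obtains \eqref{cohEvee} directly as the case $e=1,f=0$ rather than by ``dualising'' \eqref{cohF}.
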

\begin{proof}
Since $\Psi\colon \D_{\sym_n}(X^n)\to \D(X^{[n]})$ is an equivalence, we have
\[\Hom^*(\Psi(\cE),\Psi(\cF))\cong \Hom^*_{\sym_n}(\cE, \cF)\quad\text{for every $\cE, \cF\in \D_{\sym_n}(X^n)$.}\]
Using this, all the formulae follow from \autoref{thm:PsiCC}, \autoref{thm:wedgeline}, and \autoref{prop:WExt}. Concretely, to obtain the formulae as special cases of \autoref{prop:WExt} we have to set
\begin{align*}
&\text{for \eqref{cohF}:}\quad E=\reg_X, e=0, f=1\,, \quad \text{for \eqref{cohEvee}:}\quad F=\reg_X, e=1, f=0 \,, \quad \text{for \eqref{ExtEF}:}\quad e=1, f=1\,,\\
&\text{for \eqref{cohwedge}:}\quad E=\reg_X, F=L, e=0, f=k\,, \quad \text{for \eqref{ExtEwedge}:}\quad F=L, e=1, f=k \,,\\
&\text{for \eqref{ExtwedgeF}:}\quad E=L, e=k, f=1\,,
\quad\text{for \eqref{Extwedgewedge}:}\quad E=L, F=K, e=\ell, f=k\,.\qedhere
\end{align*}

\end{proof}

\begin{remark}
Using \autoref{rem:DM}, one can easily generalise \autoref{cor:Extformulae} to formulae involving tensor products by the natural line bundles $\cD_M$. For example, \eqref{ExtEwedge} becomes    
\begin{align*}
&\Hom^*(E^{[n]}\otimes \cD_M, \wedge^k (L^{[n]})\otimes \cD_N)\\\cong& \Hom^*(E\otimes M,L\otimes N)\otimes \wedge^{k-1}\Hom^*(M, L\otimes N) \otimes S^{n-k}\Hom^*(M,N)
\\&\oplus \Hom^*(E\otimes M,N)\otimes \wedge^k\Hom^*(M,L\otimes N)\otimes S^{n-k-1}\Hom^*(M, N)\,. 
\end{align*}
\end{remark}
\begin{remark}
Using \autoref{rem:Kummer}, one can derive formulas, similar to those of \autoref{cor:Extformulae}, for the cohomology and extension groups of bundles on the generalised Kummer varieties; compare \cite[Thm.\ 6.9]{Mea}. 
\end{remark}

Formula \eqref{cohF} has been proved in different levels of generality in \cite{Dan, Sca1, Sca2}. The formulae \eqref{cohEvee} and \eqref{ExtEF} are proved in \cite{KruExt} and formula \eqref{cohwedge} is proved in \cite{Sca1}. 
To the best of the authors knowledge, \eqref{ExtEwedge}, \eqref{ExtwedgeF}, and \eqref{Extwedgewedge} are new. They generalise and strengthen the results of \cite{WangZhouTaut} on the Euler bicharacteristic of wedge powers of tautological bundles associated to line bundles on the surface as we see in the following.

\begin{definition}\label{sdefin}
 For $k\in \IN$ and $\chi\in \IZ$ we define
\[
\lambda^k\chi:=\frac 1{k!}\chi(\chi-1)\cdots(\chi-k+1)\quad,\quad s^k\chi:=\frac 1{k!}(\chi+k-1)(\chi+k-2)\cdots\chi\,. 
\]
\end{definition}
If $\chi\in \IN$, we have $\lambda^k\chi=\binom\chi k$ and $s^k\chi=\binom{\chi+k-1} k$. It follows that, for a finite dimensional graded vector space $V^*$, we have
\[
 \chi(\wedge^kV^*)=\lambda^k\chi(V^*)\quad, \quad \chi(S^kV^*)=s^k\chi(V^*)
\]
where $\chi(V^*)=\sum_{i\in \IZ}(-1)^i\dim(V^i)$ is the Euler characteristic and the wedge and symmetric powers are formed in the graded sense, i.e.\ they take into account signs coming from the grading.

\begin{cor}\label{cor:bichar} If $X$ is projective, we have
 \[
  \chi(\wedge^k K^{[n]}, \wedge^\ell L^{[n]})=\sum_{i=\max \{0,k+\ell-n\}}^{\min\{k,\ell\}}s^i\chi(K,L)\cdot \lambda^{k-i}\chi(K^\vee)\cdot \lambda^{\ell-i}\chi(L)\cdot s^{n+i-k-\ell}\chi(\reg_X)
 \]
 for every $L,K\in \Pic X$ and $k,\ell,n\in \IN$ non-negative integers.
In terms of generating functions this can be expressed equivalently as  
\begin{align}\label{KLcounting}
\sum_{n=0}^\infty \chi(\Lambda_{-v}K^{[n]}, \Lambda_{-u} L^{[n]})Q^n= \exp\left(\sum_{r=1}^\infty \chi(\Lambda_{-v^r} K, \Lambda_{-u^r} L) \frac{Q^r}r    \right) 
\end{align}
where for a vector bundle $E$ of rank $r$ and a formal parameter $t$, we use the convention $\Lambda_{t}E=\sum_{i=0}^r(\wedge^i E)t^i$ as a sum in the Grothendieck group.
\end{cor}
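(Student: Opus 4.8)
The plan is to derive \autoref{cor:bichar} purely formally from the cohomology formula \eqref{Extwedgewedge} of \autoref{cor:Extformulae}. First I would take Euler characteristics in \eqref{Extwedgewedge}: since $X$ is projective, all the graded vector spaces involved are finite-dimensional, and taking Euler characteristics is multiplicative for tensor products and turns $S^i$ and $\wedge^j$ into $s^i$ and $\lambda^j$ by the discussion following \autoref{sdefin}. This immediately yields the closed formula
\[
\chi(\wedge^k K^{[n]}, \wedge^\ell L^{[n]})=\sum_{i=\max \{0,k+\ell-n\}}^{\min\{k,\ell\}}s^i\chi(K,L)\cdot \lambda^{k-i}\chi(K^\vee)\cdot \lambda^{\ell-i}\chi(L)\cdot s^{n+i-k-\ell}\chi(\reg_X)\,.
\]
Here I would record the elementary numerical facts that $\chi(K,L)=\chi(K^\vee\otimes L)$, $\chi(K^\vee)=\chi(\reg_X,K)$, $\chi(L)=\chi(\reg_X,L)$, and $\chi(\reg_X)=\chi(\reg_X,\reg_X)$, so that every term on the right is expressed through bicharacteristics on $X$; this identification is what makes the generating-function reformulation transparent.

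Next I would establish the equivalence of the closed formula with the generating-function identity \eqref{KLcounting}. The cleanest route is to observe that, in the Grothendieck group, $\Lambda_{-v}K^{[n]}=\sum_{k}(-v)^k\wedge^k K^{[n]}$, so that
\[
\sum_{n=0}^\infty \chi(\Lambda_{-v}K^{[n]}, \Lambda_{-u} L^{[n]})Q^n=\sum_{n,k,\ell}(-1)^{k+\ell}v^k u^\ell\,\chi(\wedge^k K^{[n]},\wedge^\ell L^{[n]})Q^n\,.
\]
Substituting the closed formula and reorganising the quadruple sum over $(n,k,\ell,i)$ by setting $a=k-i$, $b=\ell-i$, $c=n+i-k-\ell$ (so that $n=a+b+c+i$ with $a,b,c,i\ge 0$), the sum factors as a product of four independent generating series in $Q$, one for each of the ``building blocks'' $s^i\chi(K,L)$, $\lambda^{a}\chi(K^\vee)$, $\lambda^{b}\chi(L)$, $s^{c}\chi(\reg_X)$, with appropriate powers of $v$, $u$, $vu$. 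Using the elementary identities $\sum_{j\ge0}\lambda^j(\chi)\,t^j=(1+t)^\chi$ and $\sum_{j\ge0}s^j(\chi)\,t^j=(1-t)^{-\chi}$ (valid as formal power series for any $\chi\in\IZ$, by \autoref{sdefin}), each factor becomes an explicit power of a binomial, and one recognises the total product as the exponential of a logarithm, namely $\exp\bigl(\sum_{r\ge1}\chi(\Lambda_{-v^r}K,\Lambda_{-u^r}L)Q^r/r\bigr)$; the key input here is the plethystic identity $(1\pm t)^{\chi}=\exp\bigl(\mp\sum_{r\ge1}\chi\,t^r/r\bigr)$ together with $\chi(\Lambda_{-v^r}K,\Lambda_{-u^r}L)=\chi(\reg_X)-v^r\chi(K^\vee)-u^r\chi(L)+v^ru^r\chi(K,L)$.

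Alternatively, and perhaps more in the spirit of the paper, I could avoid recomputing and instead cite that \eqref{KLcounting} with $K=L$ is precisely the formula of \cite{WangZhouTaut} recalled as \eqref{Lcountingintro} in the introduction, and then note that the general $K\ne L$ case follows by the same bookkeeping; the appendix (\autoref{Appendix}) is advertised to contain exactly this comparison, so I would defer the detailed exponentiation argument there and keep the proof of \autoref{cor:bichar} to the one-line derivation of the closed formula plus a pointer. The main obstacle — really the only non-formal point — is the combinatorial reindexing that turns the single sum with the awkward cutoffs $\max\{0,k+\ell-n\}\le i\le\min\{k,\ell\}$ into a clean product of four unconstrained geometric-type series; once the substitution $(k,\ell,n)\mapsto(a+i,b+i,a+b+c+i)$ is made the constraints disappear and everything is routine manipulation of formal power series, so I would present that substitution explicitly and leave the rest as a direct check.
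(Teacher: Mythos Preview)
Your proposal is correct and matches the paper's approach essentially line for line: the paper also derives the closed formula directly from \eqref{Extwedgewedge} and then, in \autoref{Appendix}, proves the equivalence with \eqref{KLcounting} by expanding $\chi(\Lambda_{-v^r}K,\Lambda_{-u^r}L)=\chi(\reg_X)-v^r\chi(K^\vee)-u^r\chi(L)+(vu)^r\chi(K,L)$, using $\exp(\sum_{r\ge1}t^r/r)=(1-t)^{-1}$ to factor the exponential as $(1-vuQ)^{-\chi(K,L)}(1-vQ)^{\chi(K^\vee)}(1-uQ)^{\chi(L)}(1-Q)^{-\chi(\reg_X)}$, and reading off coefficients via the identities for $\lambda^j$ and $s^j$. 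The only cosmetic difference is direction: the paper starts from the exponential and extracts the coefficient of $v^ku^\ell Q^n$, whereas you start from the closed formula and reindex via $(a,b,c,i)$ to assemble the product---but these are the same computation run forwards versus backwards.
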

\begin{proof}
The first formula follows directly from \eqref{Extwedgewedge}. The equivalence of the two formulae is shown in \autoref{Appendix}. 
\end{proof}
 
We will further discuss the formula for the Euler bicharacteristic in \autoref{sec:further}.

\section{Tensor products and their Euler characteristic}

\subsection{Invariants of tensor products under the McKay correspondence}
For the computation of extension groups of sheaves or objects $F\in \D(X^{[n]})$ on the Hilbert scheme, we can either use a description of $\Phi(F)$ or one of $\Psi^{-1}(F)$ and we have seen that the latter is often more convenient. For the computation of the cohomology of tensor products, however, it can be useful to have both descriptions at the same time due to the following
\begin{prop}\label{prop:relativeFMtensor} For $E\in \D(X^{[n]})$ and $F\in\D_{\sym_n}(X^n)$ there are functorial isomorphisms  
 \[
  \mu_*(E\otimes \Psi(F))\cong \pi^{\sym_n}_*(\Phi(E)\otimes F)\,.
 \]
\end{prop}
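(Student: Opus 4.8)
The plan is to unravel both sides as compositions of Fourier--Mukai-type functors attached to the diagram \eqref{diag:HC} and to reduce the identity to the (equivariant) projection formula together with commuting the invariants functor past proper push-forwards, as in \eqref{invacommutes}. Recall that $\Phi=p_*q^*$, $\Psi=p_*^{\sym_n}q^*$, and that $\mu\colon X^{[n]}\to X^{(n)}$, $\pi\colon X^n\to X^{(n)}$ sit in the commutative square with $\tau=\mu$ and the universal cluster $\cZ$ at the apex, via $q\colon\cZ\to X^{[n]}$ and $p\colon\cZ\to X^n$.

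First I would rewrite the left-hand side. Since $\Psi(F)=p_*^{\sym_n}q^*F=(q_*(p^*F))^{\sym_n}$ — here using that $q$ is $\sym_n$-invariant, so $p_*q^*$ lands in $\Coh_{\sym_n}$ and $q$ being the map to the trivial-action side lets us take invariants — and since $\mu\circ q=\pi\circ p$ as maps $\cZ\to X^{(n)}$ after composing with $\tau=\mu$ downstairs, I want to move $E$ inside. Using that $E$ carries the trivial $\sym_n$-action (it lives on $X^{[n]}$) and the projection formula along $q$, together with \eqref{invacommutes} to pull $(\_)^{\sym_n}$ through $\mu_*$, one gets
\[
\mu_*\bigl(E\otimes \Psi(F)\bigr)\cong \mu_*\bigl(E\otimes (q_*q^*\Phi^{-1}?\dots)\bigr)
\]
— more directly: $\mu_*(E\otimes p_*^{\sym_n}q^*F)\cong (\mu_* q_*(q^*E\otimes q^*\Psi\text{-input}))^{\sym_n}$, and since $q^*E\otimes p^*F$ is the natural object on $\cZ$, this becomes $\bigl((\mu q)_*(q^*E\otimes p^*F)\bigr)^{\sym_n}$.

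Symmetrically, the right-hand side is $\pi_*^{\sym_n}(\Phi(E)\otimes F)=\bigl(\pi_*(p_*q^*E\otimes F)\bigr)^{\sym_n}$, and by the (equivariant) projection formula along $p$ this is $\bigl(\pi_* p_*(q^*E\otimes p^*F)\bigr)^{\sym_n}=\bigl((\pi p)_*(q^*E\otimes p^*F)\bigr)^{\sym_n}$. Since $\mu q=\pi p$ by commutativity of \eqref{diag:HC}, the two expressions agree. The only genuinely delicate points I expect are bookkeeping ones: (i) checking that the projection formula applies — i.e.\ that $q$ and $p$ are proper (they are, being projections from $\cZ\subset X^{[n]}\times X^n$ with $X^{[n]}$ of finite type and $\cZ$ finite over $X^{[n]}$, hence proper over both factors), so no boundedness issue arises with the derived push-forwards; and (ii) verifying that all the linearisations match up, in particular that taking $\sym_n$-invariants commutes with the push-forward $\mu_*$ (resp.\ $\pi_*$) and with tensoring by the trivially-linearised object $\Phi(E)$ — both of which are instances of \eqref{invacommutes} and the equivariant projection formula recorded in the preliminaries. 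I would present the argument as a short chain of canonical isomorphisms anchored on the identity $\mu\circ q=\pi\circ p$, flagging functoriality in $E$ and $F$ as immediate since every isomorphism used is canonical.
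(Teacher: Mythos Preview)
Your approach is correct and essentially identical to the paper's: both arguments reduce the statement to the equivariant projection formula applied along $q$ and along $p$, together with the commutativity $\mu\circ q=\pi\circ p$ of diagram \eqref{diag:HC} and the fact that $(\_)^{\sym_n}$ commutes with proper push-forward. One small notational slip: you write $\Psi=p_*^{\sym_n}q^*$ (echoing a typo in the paper's \autoref{prop:Psiequi}), but you then correctly use $\Psi(F)=(q_*p^*F)^{\sym_n}=q_*^{\sym_n}p^*F$ in the actual computation, so the argument goes through unaffected.
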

\begin{proof}
 This follows from the commutativity of the diagram 
 \[
\xymatrix{
 \cZ \ar^p[r]  \ar^{q}[d] & X^n\ar^{\pi}[d]   \\
 X^{[n]} \ar^{\mu}[r] & X^{(n)} 
}
\]
together with the projection formula:
\begin{align*}
\mu_*(E\otimes \Psi(F))\cong \mu_*(E\otimes q_*^{\sym_n}p^*F)\cong \mu_*q_*^{\sym_n}(q^*E\otimes p^*F) &\cong \pi_*^{\sym_n}p_*(q^*E\otimes p^*F)\\&\cong \pi_*^{\sym_n}(p_*q^*E\otimes F)\\&\cong \pi^{\sym_n}_*(\Phi(E)\otimes F)\,.\qedhere 
\end{align*}
\end{proof}
%

%
%
\begin{cor}\label{cor:relativeFMcoh} $E\in \D(X^{[n]})$ and $F\in\D_{\sym_n}(X^n)$ there are functorial isomorphisms  
 \[
  \Ho^*(X^{[n]}, E\otimes \Psi(F))\cong\Ho(X^{(n)},\pi_*^{\sym_n}(\Phi(E)\otimes F))\cong \Ho_{\sym_n}^*(X^{n},\Phi(E)\otimes F)\,.
 \]
where $\Ho_{\sym_n}^*(X^{n},\Phi(E)\otimes F)$ denotes the equivariant cohomology, i.e.\
\[
\Ho_{\sym_n}^*(X^{n},\Phi(E)\otimes F)\cong \Hom_{\sym_n}^*(\reg_{X^{n}},\Phi(E)\otimes F)\cong \Ho^*\bigl(X^{n},\Res(\Phi(E)\otimes F)\bigr)^{\sym_n} \,.
\]
\end{cor}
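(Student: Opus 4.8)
The plan is to deduce both isomorphisms formally from \autoref{prop:relativeFMtensor}, which gives an isomorphism $\mu_*(E\otimes \Psi(F))\cong \pi^{\sym_n}_*(\Phi(E)\otimes F)$ in $\D(X^{(n)})$, functorial in $E\in\D(X^{[n]})$ and $F\in\D_{\sym_n}(X^n)$, by applying the derived global sections functor $R\Gamma(X^{(n)},\_)$ to it. First I would use that $\mu\colon X^{[n]}\to X^{(n)}$ is proper, so that on underived functors $\Gamma(X^{(n)},\_)\circ\mu_*\cong\Gamma(X^{[n]},\_)$, and hence $R\Gamma(X^{(n)},\_)\circ\mu_*\cong R\Gamma(X^{[n]},\_)$ (recall that all push-forwards are derived by our convention). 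Evaluating on $E\otimes\Psi(F)$ identifies the left-hand side with $\Ho^*(X^{[n]}, E\otimes\Psi(F))$, which yields the first claimed isomorphism $\Ho^*(X^{[n]}, E\otimes\Psi(F))\cong \Ho^*(X^{(n)},\pi^{\sym_n}_*(\Phi(E)\otimes F))$.

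For the second isomorphism I would unravel $\pi^{\sym_n}_*=(\_)^{\sym_n}\circ\pi_*$. Since $\sym_n$ is finite and we work over $\IC$, the invariants functor $(\_)^{\sym_n}$ is a direct summand of the identity and in particular exact; on underived functors it commutes with $\Gamma$ by the instance of \eqref{invacommutes} for the structure morphism of $X^{(n)}$, and by exactness this passes to derived functors, giving $R\Gamma(X^{(n)},(\_)^{\sym_n}M)\cong R\Gamma(X^{(n)},M)^{\sym_n}$. Combining this with Leray for the finite (hence proper) quotient map $\pi\colon X^n\to X^{(n)}$, i.e.\ $R\Gamma(X^{(n)},\_)\circ\pi_*\cong R\Gamma(X^n,\_)$, one obtains
\[
\Ho^*\bigl(X^{(n)},\pi^{\sym_n}_*(\Phi(E)\otimes F)\bigr)\cong \Ho^*\bigl(X^n,\Res(\Phi(E)\otimes F)\bigr)^{\sym_n}\,,
\]
and the right-hand side equals the equivariant cohomology $\Ho^*_{\sym_n}(X^n,\Phi(E)\otimes F)$ by the identity recalled in the statement together with \eqref{GinvaHom}. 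Functoriality in $E$ and $F$ is inherited from \autoref{prop:relativeFMtensor} since every functor in sight is natural.

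I do not anticipate a genuine obstacle: the argument is a routine composition-of-derived-functors computation, and the only points needing care are that $\mu$ and $\pi$ are proper (so the two Leray-type identifications apply), that all functors — push-forwards, tensor products, global sections — are understood as derived, and that $\Phi(E)\otimes F$ carries the evident $\sym_n$-linearisation induced by those of $\Phi(E)$ and $F$ (so that the equivariant cohomology on the right is meaningful). Observe also that no projectivity of $X$ is used here; the cohomology groups involved may be infinite-dimensional, and projectivity enters only in \autoref{cor:bichar} when passing to Euler characteristics.
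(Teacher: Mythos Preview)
Your proposal is correct and is exactly the intended argument: the paper states this as a corollary of \autoref{prop:relativeFMtensor} without further proof, the content being precisely the application of $R\Gamma(X^{(n)},\_)$ together with the Leray-type identifications for the proper morphisms $\mu$ and $\pi$ and the commutation of $(\_)^{\sym_n}$ with global sections that you spell out.
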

\subsection{Euler characteristic of tensor products of tautological bundles}
In addition to the description of $\Phi(F^{[n]})$ of \cite{Sca1, Sca2} for $F\in \D(X)$ we now have a description of $\Psi^{-1}(\wedge^k L^{[n]})$ for $L\in \Pic X$ by \autoref{thm:wedgeline}.
Hence, as an application of the observation made in the previous subsection, we can compute the Euler characteristic of objects of the form $F^{[n]}\otimes \wedge^kL^{[n]}$. 
\begin{prop}\label{prop:mutensor}
For $F\in \D(X)$, $L\in \Pic X$, and $0\le k\le n$, we have natural isomorphisms
\[\mu_*(F^{[n]}\otimes \wedge^kL^{[n]})\cong \pi_*^{\sym_n}(\CC^\bullet_F\otimes \WW^k(L))\] with $\pi_*^{\sym_n}(\CC^p_F\otimes \WW^k(L))=0$ for $p>k$. Furthermore, 
$\pi_*^{\sym_n}(\CC^p_F\otimes \WW^k(L))$ is given by 
\begin{align*}
  p=0:&\quad\begin{cases}&\pi_*^{\sym_{k-1}\times \sym_{n-k}}\bigl(\pr_1^*F\otimes\pr_{[k]}^*(L^{\boxtimes k}\otimes \alt_k)\bigr)\\ &\oplus \pi_*^{\sym_{k}\times \sym_{n-k-1}}\bigl(\pr_{k+1}^*F\otimes\pr_{[k]}^*(L^{\boxtimes k}\otimes \alt_k)\bigr)\end{cases} \\
p\in [k-1]:&\quad\begin{cases} &\pi_*^{\sym_{p+1}\times\sym_{k-p-1}\times \sym_{n-k}}\bigl(F_{[p+1]}\otimes\pr_{[k]}^*(L^{\boxtimes k}\otimes \alt_k)\bigr)\\& \oplus \pi_*^{\sym_{p}\times \sym_{k-p}\times \sym_{n-k-1}}\bigl(F_{[p]\cup\{k+1\}}\otimes\pr_{[k]}^*(L^{\boxtimes k}\otimes \alt_k)\bigr)\end{cases}\\
p=k:&\quad\quad \pi_*^{\sym_{k}\times\sym_{n-k-1}}\bigl(F_{[k+1]}\otimes\pr_{[k]}^*(L^{\boxtimes k}\otimes \alt_k)\bigr) \,.\\
\end{align*}
\end{prop}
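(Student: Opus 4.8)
The plan is to apply \autoref{prop:relativeFMtensor} with $E=F^{[n]}$ and the equivariant object taken to be $\WW^k(L)$. By \autoref{thm:wedgeline} we have $\Psi(\WW^k(L))\cong\wedge^k(L^{[n]})$, so the left-hand side of \autoref{prop:relativeFMtensor} is exactly $\mu_*(F^{[n]}\otimes\wedge^kL^{[n]})$, while the right-hand side is $\pi_*^{\sym_n}(\Phi(F^{[n]})\otimes\WW^k(L))$. Now invoking Scala's theorem in the form of \autoref{thm:Scala} (or rather its extension \autoref{rem:Postnikov} so that it is valid for $F\in\D(X)$), we have $\Phi(F^{[n]})\cong\CC^\bullet_F$, which gives the first displayed isomorphism $\mu_*(F^{[n]}\otimes\wedge^kL^{[n]})\cong\pi_*^{\sym_n}(\CC^\bullet_F\otimes\WW^k(L))$. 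Since $\pi_*^{\sym_n}$ is exact (it is a composition of the finite pushforward $\pi_*$ with the exact invariants functor $(\_)^{\sym_n}$ in characteristic zero) it commutes with the Postnikov system, so it suffices to analyse each $\pi_*^{\sym_n}(\CC^p_F\otimes\WW^k(L))$ termwise.

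Next I would unwind the two induced objects appearing in $\CC^p_F\otimes\WW^k(L)$. Recall $\CC^p_F=\Ind_{\sym_{p+1}\times\sym_{n-p-1}}^{\sym_n}F_{[p+1]}$ and $\WW^k(L)=\Ind_{\sym_k\times\sym_{n-k}}^{\sym_n}\pr_{[k]}^*(L^{\boxtimes k}\otimes\alt_k)$. Using the projection formula for induction, $\Ind_H^G(A)\otimes B\cong\Ind_H^G(A\otimes\Res_G^H B)$, one rewrites the tensor product as a single induced object, and then $\pi_*^{\sym_n}\circ\Ind_H^{\sym_n}\cong(\_)^H\circ\pi_*$ by \autoref{lem:globalFrob} together with \eqref{Indcommutes} (this is the same maneuver used in the proof of \autoref{prop:Psi}). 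So everything reduces to computing the $H$-invariant pushforward $\pi_*^H\bigl(F_{[p+1]}\otimes\Res_H\WW^k(L)\bigr)$ for $H=\sym_{p+1}\times\sym_{n-p-1}$. The restriction $\Res_H\WW^k(L)$ decomposes as a sum over $H$-orbits of $k$-subsets $I\subset[n]$, and the summand indexed by $I$ is $\pr_I^*(L^{\boxtimes k})\otimes\alt_I$ with an appropriate stabilizer; combining these with $F_{[p+1]}=\iota_{[p+1]*}p_{[p+1]}^*F\otimes\alt_{[p+1]}$ and applying \autoref{rem:Danila} to pick one representative per orbit yields the stated list.

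The combinatorial heart of the argument, and the step I expect to be the main obstacle, is the orbit analysis: classifying the $H$-orbits of $k$-element subsets $I\subset[n]$ where $H=\sym_{p+1}\times\sym_{n-p-1}$ acts by permuting $[p+1]$ and $[p+2,n]$ separately, keeping track for each orbit of the stabilizer subgroup and of how the sign twist $\alt_I\otimes\alt_{[p+1]}$ interacts with it. The key point causing the vanishing $\pi_*^{\sym_n}(\CC^p_F\otimes\WW^k(L))=0$ for $p>k$ is that if $|I\cap[p+1]|<p+1$ then the transposition swapping an element of $[p+1]\setminus I$ with another element of $[p+1]\setminus I$ (which exists precisely when $p+1-|I\cap[p+1]|\ge 2$, in particular when $p>k\ge|I\cap[p+1]|$ forces at least two elements of $[p+1]$ to lie outside $I$) fixes both $F_{[p+1]}$ (it lies in $\sym_{[p+1]}$, hence acts on $\alt_{[p+1]}$ by $-1$) and $\pr_I^*(L^{\boxtimes k})$ (it moves none of the coordinates in $I$), while acting by $+1$ on $\alt_I$; so the relevant summand is killed by taking invariants. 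For $p\le k$ one instead finds exactly the two orbit types recorded — according to whether the single ``extra'' index of $I$ lying in $[p+2,n]$ is taken into account or not — which produces the two-line case distinctions for $p=0$ and $p\in[k-1]$, and the single surviving term for $p=k$. Once this bookkeeping is done, the identification of the invariants $F_{[p+1]}^{H_j}$-type terms with the expressions $\pi_*^{\sym_{\bullet}\times\cdots}(\cdots)$ in the statement is a direct application of \eqref{invacommutes} and the definitions, with no further difficulty.
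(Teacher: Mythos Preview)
Your approach is correct and essentially the same as the paper's, with only a minor organizational difference. The paper skips the projection-formula-for-induction step and instead writes the tensor product directly as
\[
\CC^p_F\otimes \WW^k(L)\cong \bigoplus_{(I,J)} F_I\otimes \pr_J^*(L^{\boxtimes k}\otimes \alt_J)
\]
indexed by pairs $(I,J)$ with $|I|=p+1$, $|J|=k$, and then applies \autoref{rem:Danila} to the full $\sym_n$-action on pairs. The stabilisers are $\sym_{I\cap J}\times\sym_{I\setminus J}\times\sym_{J\setminus I}\times\sym_{[n]\setminus(I\cup J)}$, and the vanishing criterion is simply that any transposition in $\sym_{I\setminus J}$ acts by $-1$, so the summand dies whenever $|I\setminus J|\ge 2$. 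This single condition simultaneously gives the vanishing for $p>k$ \emph{and} kills the unwanted orbits when $p\le k$; the two surviving orbit types are $|I\setminus J|=0$ (representative $I=[p+1]$, $J=[k]$) and $|I\setminus J|=1$ (representative $I=[p]\cup\{k+1\}$, $J=[k]$). Your route via $\Ind_H^{\sym_n}(A)\otimes B\cong\Ind_H^{\sym_n}(A\otimes\Res B)$ followed by \autoref{lem:globalFrob} reaches exactly the same orbit problem after fixing $I=[p+1]$, so the computations coincide; the paper's version is just a bit more symmetric and avoids the detour. One small point: your description of the two surviving orbit types in the last paragraph is garbled (the ``extra index'' distinguishing the cases is the element of $[p+1]\setminus J$, not something in $[p+2,n]$), but the underlying bookkeeping is right.
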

\begin{proof}
The assertion $\mu_*(F^{[n]}\otimes \wedge^kL^{[n]})\cong \pi_*^{\sym_n}(\CC^\bullet_F\otimes \WW^k(L))$ follows from  
\autoref{prop:relativeFMtensor} together with \autoref{thm:Scala} and \autoref{thm:wedgeline}. 

The rest is again an application of \autoref{rem:Danila}: We have 
\[
\CC^p_F\otimes \WW^k(L)\cong \bigoplus_{(I,J)\in \cI} F_I\otimes \pr_J^*(L^{\boxtimes k}\otimes \alt_k)\quad,\quad \cI=\{(I,J)\mid I,J\subset [n],\, |I|=p+1,\,,|J|=k\}\,.
\]
The $\sym_n$-action on $\CC^p_F\otimes \WW^k(L)$ induces the action $\sigma\cdot(I,J)=(\sigma(I), \sigma(J))$ on the index set $\cI$. Hence, the stabiliser subgroups are given by 
\[
 \sym_{(I,J)}=\sym_{I\cap J}\times \sym_{I\setminus J}\times \sym_{J\setminus I}\times \sym_{[n]\setminus(I\cup J)}\,.
 \]
Every transposition in $\sym_{I\setminus J}$ acts by $-1$ on $F_I\otimes \pr_J^*(L^{\boxtimes k}\otimes \alt_k)$. 
Hence, the $\sym_{I\setminus J}$-invariants of $F_I\otimes \pr_J^*(L^{\boxtimes k}\otimes \alt_k)$ vanish for $|I\setminus J|\ge 2$.
\end{proof}

\begin{remark} \label{rem:mutensor}
For $k=1$, the statement of \autoref{prop:mutensor} remains valid if we replace the line bundle $L$ by an arbitrary object $E\in \D(X)$; compare \autoref{thm:PsiCC}. Hence, we recover \cite[Thm.\ 3.2.2]{Sca1} and \cite[Thm.\ 32]{Sca2}. 
\end{remark}

\begin{theorem}\label{thm:tensorEuler}Let $F\in \D(X)$, $L\in \Pic X$, and $0\le k\le n$. If $X$ is projective,
 \begin{align*}
  \chi(F^{[n]}\otimes \wedge^kL^{[n]})=\begin{aligned}&s^{n-k-1}\chi(\reg_X)\cdot\bigl(\sum_{p=0}^k(-1)^p\chi(F\otimes L^{\otimes p})\cdot \lambda^{k-p}\chi(L)\bigr)\\& - s^{n-k}\chi(\reg_X)\cdot\bigl(\sum_{p=1}^k(-1)^p\chi(F\otimes L^{\otimes p})\cdot \lambda^{k-p}\chi(L)\bigr)\,.
                                                                              \end{aligned}
 \end{align*}
\end{theorem}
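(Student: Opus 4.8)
The plan is to compute the Euler characteristic term-by-term from the description of $\mu_*(F^{[n]}\otimes \wedge^k L^{[n]})$ given in \autoref{prop:mutensor}. Since $X$ is projective, $\chi(F^{[n]}\otimes \wedge^k L^{[n]}) = \chi\bigl(X^{(n)}, \mu_*(F^{[n]}\otimes \wedge^k L^{[n]})\bigr)$ because $\mu$ is proper (so Euler characteristics are preserved by $R\mu_*$), and by \autoref{prop:mutensor} this equals $\sum_{p=0}^{k}(-1)^p\chi\bigl(X^{(n)}, \pi_*^{\sym_n}(\CC^p_F\otimes \WW^k(L))\bigr)$, the sign $(-1)^p$ being the position of $\CC^p_F$ in the complex $\CC^\bullet_F$. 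So the real task is to evaluate each $\chi\bigl(\pi_*^{\sym_n}(\CC^p_F\otimes \WW^k(L))\bigr)$ using the explicit summands listed in \autoref{prop:mutensor}.

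First I would record the basic bookkeeping tool: for a finite group $H=H_1\times H_2\times\cdots$ acting on a product $X^m$ factor-wise, and an external tensor product $\bigboxtimes_j V_j$ of $H_j$-sheaves, one has $\chi_{H}\bigl(\bigotimes_j\mathrm{pr}_j^*V_j\bigr) = \prod_j \chi_{H_j}(V_j)$, and moreover $\chi_{\sym_a}(W^{\boxtimes a})=s^a\chi(W)$ while $\chi_{\sym_a}(W^{\boxtimes a}\otimes\alt_a)=\lambda^a\chi(W)$, straight from \autoref{sdefin} and the discussion after it (symmetric/alternating powers of a graded vector space have Euler characteristics $s^a$ and $\lambda^a$ of the Euler characteristic). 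Applying this to each of the summands in \autoref{prop:mutensor}: on the diagonal $\Delta_{[p+1]}$ the relevant sheaf restricts to something whose first factor contributes $\chi(F\otimes L^{\otimes p})$ (the diagonal glues $F$ with $p$ copies of $L$, since on $I=[p+1]$ exactly $p$ of the indices lie in $J=[k]$ while one index, carrying $F$, coincides with them), the remaining $k-p$ copies of $L$ sitting on free $\sym_{k-p}$-orbits with the $\alt$-twist contribute $\lambda^{k-p}\chi(L)$, and the $n-k$ (resp.\ $n-k-1$) leftover $\reg_X$-factors contribute $s^{n-k}\chi(\reg_X)$ (resp.\ $s^{n-k-1}\chi(\reg_X)$). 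This gives, for each $p\in[0,k]$, a contribution of the shape $(-1)^p\chi(F\otimes L^{\otimes p})\lambda^{k-p}\chi(L)\bigl(s^{n-k-1}\chi(\reg_X)+s^{n-k}\chi(\reg_X)\bigr)$ from the "$p$"-type summands, but with the two summands in the $p=0$ and $p\in[k-1]$ cases contributing to the two different powers of $s\chi(\reg_X)$ in a way that must be tracked carefully.

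The bookkeeping step that I expect to be the main obstacle is exactly reconciling the two families of summands in \autoref{prop:mutensor} — the one where the $(k+1)$-st index is "absorbed" into the diagonal and the one where it is a separate free index carrying $F$ — and seeing that after telescoping they assemble into the stated difference of the two sums (one running $p=0$ to $k$, multiplied by $s^{n-k-1}\chi(\reg_X)$, and one running $p=1$ to $k$, multiplied by $s^{n-k}\chi(\reg_X)$). Concretely, each $\CC^p_F\otimes\WW^k(L)$ (for $1\le p\le k-1$) contributes a term with $s^{n-k}\chi(\reg_X)$ coming from $F_{[p+1]}$ and a term with $s^{n-k-1}\chi(\reg_X)$ coming from $F_{[p]\cup\{k+1\}}$; reindexing the second family by $p'=p+1$ shows it is (up to sign) the first family shifted, so the combined sum collapses. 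I would carry out this reindexing explicitly, keeping track of the edge cases $p=0$ (where $F_{[p+1]}=F_{\{1\}}$ degenerates to $\mathrm{pr}_1^*F$, i.e.\ $L^{\otimes 0}$ appears, giving the $p=0$ term only in the $s^{n-k-1}$-sum since the would-be $s^{n-k}$ partner is $\mathrm{pr}_{k+1}^*F$ which reindexes to the $p=1$ slot) and $p=k$ (where only the single summand $F_{[k+1]}$ survives, landing in the $s^{n-k-1}$-sum as its top term). Assembling these yields precisely the claimed formula.
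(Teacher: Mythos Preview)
Your strategy is exactly the paper's: invoke \autoref{prop:mutensor} and compute the Euler characteristic of each listed summand via K\"unneth, then sum with the sign $(-1)^p$. The paper's proof is in fact a single sentence saying this is a ``straightforward computation'' and gives one sample term.

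However, your bookkeeping contains a genuine error that would propagate if carried out as written. On the diagonal $\Delta_{[p+1]}$ (with $p+1\le k$), \emph{all} $p+1$ indices lie in $J=[k]$, so the restriction of $F_{[p+1]}\otimes\pr_{[k]}^*L^{\boxtimes k}$ contributes $\chi(F\otimes L^{\otimes\, p+1})$, not $\chi(F\otimes L^{\otimes p})$; the sheaf $F$ sits on the single diagonal factor via $p_I^*$ and is tensored with every copy of $L$ landing there, not on a separate index. The contribution $\chi(F\otimes L^{\otimes p})$ arises instead from the other orbit representative $I=[p]\cup\{k+1\}$. With this corrected, there is no telescoping at all: the family of ``$I\subset[k]$'' summands (present for $p\in[0,k-1]$) gives
\[
\sum_{p=0}^{k-1}(-1)^p\chi(F\otimes L^{\otimes\,p+1})\,\lambda^{k-p-1}\chi(L)\cdot s^{n-k}\chi(\reg_X)
= -s^{n-k}\chi(\reg_X)\sum_{q=1}^{k}(-1)^q\chi(F\otimes L^{\otimes q})\,\lambda^{k-q}\chi(L),
\]
and the family of ``$|I\setminus[k]|=1$'' summands (present for $p\in[0,k]$) gives directly
\[
s^{n-k-1}\chi(\reg_X)\sum_{p=0}^{k}(-1)^p\chi(F\otimes L^{\otimes p})\,\lambda^{k-p}\chi(L).
\]
These are already the two sums in the statement; no cancellation between them is needed. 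So fix the off-by-one in the diagonal computation and drop the telescoping narrative, and your argument goes through.
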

\begin{proof}
By \autoref{prop:mutensor}, we have $\Ho^*(F^{[n]}\otimes \wedge^k L^{[n]})\cong \Ho^*( \pi_*^{\sym_n}(\CC^\bullet_F\otimes \WW^k(L)))$. The assertion follows now from a straightforward computation of the Euler characteristics of the terms of $\pi_*^{\sym_n}(\CC^\bullet_F\otimes \WW^k(L))$. For example, for $p\in [k-1]$, the K\"unneth formula gives
\begin{align*}&\Ho^*\Bigl(\pi_*^{\sym_{p+1}\times\sym_{k-p-1}\times \sym_{n-k}}\bigl(F_{[p+1]}\otimes\pr_{[k]}^*(L^{\boxtimes k}\otimes \alt_k)\bigr)\Bigr)\\\cong &\Ho^*(F\otimes L^{\otimes p+1})\otimes \wedge^{k-p-1}\Ho^*(L)\otimes S^{n-k}\Ho^*(\reg_X)\,,\end{align*} 
hence
\[
 \chi\Bigl(\pi_*^{\sym_{p+1}\times\sym_{k-p-1}\times \sym_{n-k}}\bigl(F_{[p+1]}\otimes\pr_{[k]}^*(L^{\boxtimes k}\otimes \alt_k)\bigr)\Bigr)=\chi(F\otimes L^{\otimes p+1})\cdot \lambda^{k-p-1}\chi(L)\cdot s^{n-k}\chi(\reg_X).\qedhere
\]
\end{proof}

\begin{remark}\label{rem:tensorEuler}
 Again, it is possible to express \autoref{thm:tensorEuler} as an equality of generating functions, thus organising the formulae for varying $n$ and $k$ in one equation:
 \[
\sum_{n=0}^\infty\chi(F^{[n]}\otimes \Lambda_u L^{[n]})Q^n=\frac{(1+uQ)^{\chi(L)}}{(1-Q)^{\chi(\reg_X)}}\cdot \sum_{p=1}^\infty (-1)^{p-1}\chi\bigl(F\otimes (L^{p-1}u^{p-1}+L^pu^p)\bigr)Q^p\,. 
 \]
\end{remark}
\begin{remark}
We consider the special case that $F=L$. There is the Schur decomposition 
\[
 L^{[n]}\otimes \wedge^k L^{[n]}\cong \wedge^{k+1} L^{[n]}\oplus \mathbb S_{(2,1,\dots,1)} L^{[n]}\,;
\]
see e.g.\ \cite[eq.\ (6.9)]{FH}. We obtain a formula for the Euler characteristic $\chi(\mathbb S_{(2,1,\dots,1)} L^{[n]})$ of the Schur construction on the Hilbert scheme in terms of Euler characteristics of line bundles on the surface. Indeed,
\[
\chi(\mathbb S_{(2,1,\dots,1)} L^{[n]})=\chi(L^{[n]}\otimes \wedge^kL^{[n]})-\chi(\wedge^{k+1} L^{[n]}) 
\]
and we have such formulae for both terms on the right-hand side; see \autoref{cor:bichar} and \autoref{thm:tensorEuler}.
We can also obtain something slightly stronger, a description of $\mu_*(\mathbb S_{(2,1,\dots,1)} L^{[n]})$. Namely, one can check that, under the isomorphism of \autoref{prop:mutensor}, the direct summand $\mu_*(\wedge^{k+1} L^{[n]})$ of $\mu_*(L^{[n]}\otimes \wedge^kL^{[n]})$ corresponds to $\pi_*^{\sym_{k+1}\times \sym_{n-k-1}}(\pr_{[k+1]}^*(L^{\boxtimes k+1}\otimes \alt_{k+1}))$ embedded as a direct summand of $\pi_*^{\sym_{k}\times \sym_{n-k-1}}\bigl(\pr_{k+1}^*L\otimes\pr_{[k]}^*(L^{\boxtimes k}\otimes \alt_k)\bigr)\subset \pi_*^{\sym_n}(\CC^0_L\otimes \WW^k(L))$.
\end{remark}

\section{Further remarks}\label{sec:further}
Setting $K=L$ in \eqref{KLcounting} recovers the formula
\begin{align}\label{Lcounting}
\sum_{n=0}^\infty \chi(\Lambda_{-v}L^{[n]}, \Lambda_{-u} L^{[n]})Q^n= \exp\left(\sum_{r=1}^\infty \chi(\Lambda_{-v^r} L, \Lambda_{-u^r} L) \frac{Q^r}r    \right) 
\end{align}
which was shown in \cite{WangZhouTaut}. 
If we replace the surface $X$ by a quasi-projective variety $Y$ of arbitrary dimension, one can still associate to every vector bundle $E$ on $Y$ a tautological bundle $E^{[n]}$ on the Hilbert scheme $Y^{[n]}$ of $n$ points on $Y$ by means of the Fourier--Mukai transform along the universal family.
In \textit{loc.\ cit.\ }formula \eqref{Lcounting} is conjectured to generalise to smooth projective varieties of arbitrary dimension (instead of the surface $X$) and vector bundles of arbitrary rank (instead of the line bundle $L$). In the following, we give some restrictions to this conjecture. Namely, we prove that it does not hold if we replace $X$ by a smooth curve neither if we replace $L$ by a vector bundle of higher rank. For tautological bundles $L^{[n]}$ associated to line bundles on a smooth variety $Y$ with $\dim Y>2$, however, the conjecture still seems reasonable; see \autoref{rem:conj}. In this case, one can also hope that the slightly more general formula \eqref{KLcounting} still holds.

\subsection{Tautological bundles on Hilbert schemes of points on curves}
For $C$ a smooth curve, the Hilbert--Chow morphism $\mu\colon C^{[n]}\to C^{(n)}$ is an isomorphism. Under the identification $C^{[n]}\cong C^{(n)}$, the role of the universal family of $n$-points is played by $\Xi\cong C\times C^{(n-1)}\cong C^n/\sym_{n-1}$.
We define the reduced subscheme $D\subset C\times C^n$ as the polygraph
\[
 D=\bigl\{(x,x_1,\dots, x_n)\mid x=x_i \text{ for some $i=1,\dots,n$}\bigr\}\,.
\]
It is invariant under the $\sym_n$-action on $C\times C^n$ given by the permutation action on $C^n$.
\begin{lemma}
There is an isomorphism $D/\sym_n\cong \Xi$. 
\end{lemma}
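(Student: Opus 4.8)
The plan is to exhibit both $D/\sym_n$ and $\Xi$ as quotients of the same space and then match them. Recall that $\Xi \cong C^n/\sym_{n-1}$ where $\sym_{n-1} \cong \sym_{[2,n]}$ permutes the last $n-1$ factors, with the quotient map $C^n \to \Xi$ being essentially $\mathrm{pr}_1$ coupled with the symmetrization $C^{n-1} \to C^{(n-1)}$ on the remaining factors. On the other hand, I claim that $D$ itself is isomorphic to $\sym_n$ copies of $C^n$ glued appropriately; more precisely, the component $D_i = \{x = x_i\}$ of $D$ is isomorphic to $C^n$ (via forgetting the redundant coordinate $x$ and remembering $(x_1,\dots,x_n)$), and $D = \bigcup_{i=1}^n D_i$. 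The key point, due to Haiman's polygraph machinery (and elementary for a curve), is that $D$ is reduced, so $D$ is the reduced union of these $n$ copies of $C^n$.

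First I would set up the $\sym_n$-equivariant map $\rho\colon C^n \to D$, or rather work with the normalization. Consider the disjoint union $\coprod_{i=1}^n C^n$ where the $i$-th copy maps isomorphically onto $D_i \subset D$; the symmetric group $\sym_n$ acts on this disjoint union by simultaneously permuting the $n$ copies (sending the $i$-th copy to the $\sigma(i)$-th copy) and permuting the coordinates within each $C^n$. This action is compatible with the $\sym_n$-action on $D$. Then $D$ is the image, and since $D$ is reduced and the map is finite and generically one-to-one off the big diagonal, $D$ is precisely the pushout/gluing. The stabilizer in $\sym_n$ of (a general point of) the $i$-th copy is $\sym_{[n]\setminus\{i\}} \cong \sym_{n-1}$. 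Therefore, taking the $\sym_n$-quotient of $\coprod_{i=1}^n C^n$ picks out one copy — say $i=1$ — and quotients it by its stabilizer $\sym_{[2,n]} = \sym_{n-1}$, yielding $C^n/\sym_{n-1} \cong \Xi$.

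To make this rigorous I would argue at the level of rings on an affine patch, or better, invoke that $\Xi$ is by definition flat and finite of degree $n$ over $C$ with $\Xi \cong C^n/\sym_{n-1}$, and construct a morphism $D/\sym_n \to \Xi$ directly: a point of $D$ is a pair $(x; x_1,\dots,x_n)$ with $x$ appearing among the $x_j$; map it to $(x; \sum_j x_j - x) \in C \times C^{(n-1)} \cong \Xi$, i.e. record the marked point $x$ and the unordered multiset of the remaining $n-1$ coordinates. This is $\sym_n$-invariant and descends to $D/\sym_n \to \Xi$. Conversely, $C^n \to D$, $(x_1,\dots,x_n) \mapsto (x_1; x_1,\dots,x_n)$ lands in $D_1 \subseteq D$ and is $\sym_{n-1}$-invariant for $\sym_{[2,n]}$, inducing $\Xi = C^n/\sym_{n-1} \to D/\sym_n$; composing with the $\sym_n$-quotient map $C^n \to \Xi$ on one side and checking on points (or using reducedness of both sides, noting $C^{(n)}$ and hence $\Xi$ is smooth, or at least normal, since $C$ is a curve) shows the two morphisms are mutually inverse.

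The main obstacle is the scheme-theoretic subtlety: one must know that $D$ is reduced (this is exactly the nontrivial content of Haiman's polygraph theorem in general, though over a curve it is elementary since $D$ is a union of smooth divisors meeting transversally, or one can cite \cite[Sect.\ 1.4]{Sca1}) and that the set-theoretic bijection $D/\sym_n \to \Xi$ is actually an isomorphism of schemes. For the latter it suffices that both sides are reduced and the map is finite and bijective onto a normal target — $\Xi \cong C \times C^{(n-1)}$ is smooth, so Zariski's main theorem finishes it. I would also double-check the degree bookkeeping: $D \to C$ (first projection) has degree $n$ generically, $\sym_n$ acts with generic stabilizer $\sym_{n-1}$, so $D/\sym_n \to C$ has generic degree $n \cdot (n-1)!/n! \cdot n = n$, matching $\deg(\Xi/C) = n$, which is the consistency check that the construction is correct.
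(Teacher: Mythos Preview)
Your argument is correct and genuinely different from the paper's. The paper proceeds sheaf-theoretically: it uses the $\sym_n$-equivariant resolution
\[
0\to \reg_D\to \bigoplus_{i=1}^n \reg_{D_i} \to \bigoplus_{|I|=2}\reg_{D_I}\to \dots \to \reg_{D_{[n]}}\to 0,
\]
observes that the $\sym_n$-invariants of the terms in degree $\ge 1$ vanish (because transpositions in $\sym_I$ act by $-1$ on $\reg_{D_I}$), and hence $g_*^{\sym_n}\reg_D\cong g_*^{\sym_n}(\oplus_i\reg_{D_i})$; then Danila's lemma (\autoref{rem:Danila}) identifies the latter with $g_*^{\sym_{n-1}}\reg_{C^n}$, and one concludes $D/\sym_n\cong C^n/\sym_{n-1}\cong\Xi$. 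Your approach instead constructs the morphism $D\to\Xi$ explicitly as $(x;x_1,\dots,x_n)\mapsto(x;\sum_j x_j - x)$, produces an inverse from $C^n\cong D_1\hookrightarrow D$, and appeals to reducedness plus normality of $\Xi\cong C\times C^{(n-1)}$ to upgrade the set-theoretic bijection to a scheme isomorphism via Zariski's main theorem. Your route is more elementary and self-contained; the paper's route has the advantage of rehearsing exactly the resolution and the invariants trick that are reused in \eqref{ODresolution} and throughout \autoref{prop:curvecase}.

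One small correction: your final ``consistency check'' is garbled. The first projection $D\to C$ is not generically finite (its fibres are $(n-1)$-dimensional), so speaking of its degree makes no sense. What you presumably want is $D\to C^n$, which is generically $n:1$, so $D\to C^{(n)}$ has degree $n\cdot n!$ and $D/\sym_n\to C^{(n)}$ has degree $n$, matching $\Xi\to C^{(n)}$. This is only a sanity check and does not affect the validity of your main argument.
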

\begin{proof}
We have $\Xi\cong C^n/\sym_{n-1}$. The irreducible components of $D$ are given by
\[
 C^n\cong D_i=\bigl\{(x,x_1,\dots,x_n)\mid x=x_i \bigr\}\subset C\times C^n
\]
for $i=1,\dots,n$. 
For $I\subset [n]$, we set $D_I:=\cap_{i\in I} D_i$.
Since the components intersect transversely, exactly as in the surface case, we get an $\sym_n$-equivariant resolution 
\begin{align}\label{ODresolution}
0\to \reg_D\to \bigoplus_{i=1}^n \reg_{D_i} \to \bigoplus_{|I|=2}\reg_{D_I}\to \dots \to \reg_{D_{[n]}}\to 0 
\end{align}
of $\reg_D$; see \cite[Rem.\ 2.2.1]{Sca1} or \autoref{rem:Postnikov} for details. The linearisations of the terms of this complex are given in such a way that, for $|I|\ge 2$, every transposition $(i\,\, j)$ with $i,j\in I$ acts by $-1$ on $\reg_{D_I}$. Hence, the $\sym_n$-invariants of the higher degree terms of \eqref{ODresolution} vanish and, if we denote the quotient morphism by $g\colon D\to D/\sym_n$, we have $g^{\sym_n}_*\reg_D\cong g^{\sym_n}_*(\oplus_{i=1}^n \reg_{D_i})$. By \autoref{rem:Danila}, we get 
\begin{align}\label{invaformula}
 g_*^{\sym_n}\reg_D\cong g^{\sym_{n-1}}_*\reg_{D_1}\cong g^{\sym_{n-1}}_*\reg_{C^n}\,.
\end{align}
There is a natural bijection between the $\sym_n$-orbits of $D$ and the $\sym_{n-1}$-orbits of $C^n$. Together with \eqref{invaformula}, this shows that $D/\sym_n\cong C^n/\sym_{n-1}\cong \Xi$. 
\end{proof}

\begin{lemma}
Let $F=\Xi \times_{C^{[n]}} C^n$ be the fibre product defined by the projection $\Xi\to C^{[n]}$ and the $\sym_n$-quotient morphism $C^n\to C^{[n]}$. Then there is an isomorphism $F\cong D$.
\end{lemma}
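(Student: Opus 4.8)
The plan is to identify both $F$ and $D$ with explicit subvarieties of $C\times C^n$ and then observe that the isomorphism is forced by their definitions as fibre products. Recall that the previous lemma gives $D/\sym_n\cong \Xi$, and that under the identification $C^{[n]}\cong C^{(n)}=C^n/\sym_n$ the universal family $\Xi$ is the image of $D$ in $C\times C^{(n)}$. The morphism $\Xi\to C^{[n]}\cong C^{(n)}$ is thus the map induced by $D\to C^n/\sym_n$, i.e.\ the composite $D\to D/\sym_n\cong\Xi\to C^{(n)}$. With this in hand, $F=\Xi\times_{C^{(n)}}C^n$ can be computed as the fibre product of $D/\sym_n\to C^{(n)}\leftarrow C^n$.

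First I would make the diagram explicit. We have the quotient morphism $g\colon D\to D/\sym_n\cong\Xi$ and the quotient morphism $\pi\colon C^n\to C^{(n)}$, and the map $\Xi\to C^{(n)}$ is, by the previous lemma and Haiman's identification, just $\bar g$ where $g$ followed by the projection $C\times C^n\to C^n$ and then $\pi$ factors as $D\xrightarrow{g}\Xi\xrightarrow{\bar g}C^{(n)}$. So $F=\Xi\times_{C^{(n)}}C^n$, and there is a natural morphism $D\to F$ induced by $g\colon D\to\Xi$ together with the projection $\mathrm{pr}\colon D\subset C\times C^n\to C^n$; these agree after composing to $C^{(n)}$ since $\mathrm{pr}(x,x_1,\dots,x_n)=(x_1,\dots,x_n)$ maps to $x_1+\dots+x_n$, which is exactly $\bar g(g(x,x_1,\dots,x_n))$ because $g(x,x_1,\dots,x_n)$ is the point of $\Xi$ whose underlying $0$-cycle on $C$ is $x_1+\dots+x_n$ with marked point $x$. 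This gives a canonical map $\varphi\colon D\to F$.

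Next I would check $\varphi$ is an isomorphism. Since everything is reduced and finite over $C^{(n)}$ (the projection $\Xi\to C^{(n)}$ is finite of degree $n$, hence so is $F\to C^n$... wait, rather $F\to C^n$ is finite of degree $n$ and $D\to C^n$ is finite of degree $n$ as well), it suffices to show $\varphi$ is bijective on closed points, or even just that it is a bijection over a dense open set plus that both sides are reduced and $F$ is the scheme-theoretic fibre product. Concretely, a point of $F$ is a pair $\bigl((\xi,x),(x_1,\dots,x_n)\bigr)$ where $\xi\in C^{(n)}$, $x\in\xi$ (i.e.\ $(\xi,x)\in\Xi$), and $x_1+\dots+x_n=\xi$ as $0$-cycles. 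The condition $x_1+\dots+x_n=\xi$ together with $x\in\xi$ forces $x=x_i$ for some $i$, so the projection to $(x_1,\dots,x_n)$ lands in $D\subset C\times C^n$ — more precisely the data is equivalent to $(x,x_1,\dots,x_n)\in D$ — and this identification is inverse to $\varphi$. For the scheme structure, I would argue that $\varphi$ is a finite birational morphism of reduced schemes and that $D$ is the normalisation, or simply invoke that over the locus of distinct points both $D$ and $F$ are étale of degree $n!/(n-1)!\cdot$(appropriate) over the symmetric power and the isomorphism extends by density and reducedness; alternatively one gives the explicit inverse morphism $F\to D$, $\bigl((\xi,x),(x_1,\dots,x_n)\bigr)\mapsto(x,x_1,\dots,x_n)$, which is manifestly a morphism since it is just a projection composed with an inclusion.

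The main obstacle will be making the scheme-theoretic (as opposed to set-theoretic) identification rigorous: one must be careful that $F$, defined as a fibre product, has no embedded or nilpotent structure beyond what $D$ has, and that the natural map $\varphi$ is an isomorphism of schemes and not merely a bijection. I expect the cleanest route is to exhibit the inverse morphism directly — the projection $F\to C\times C^n$ lands in $D$ because the defining equations of $F$ (namely $x\in\xi$ and $\pi(x_1,\dots,x_n)=\xi$) imply the defining equations of $D$ (namely $\prod_i(x-x_i)=0$) — so that $\varphi$ and this projection are mutually inverse morphisms. One then only needs that the resulting map $D\to F$ is indeed the canonical one, which is immediate from the universal property of the fibre product.
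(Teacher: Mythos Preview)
Your overall strategy --- construct maps $D\to F$ and $F\to D$ and show they are mutually inverse --- matches the paper's, and your set-theoretic description of the points of $F$ is correct. But you have correctly identified the obstacle and then not actually overcome it: you never establish that $F$ is \emph{reduced}. All three of your proposed workarounds silently assume this. ``Finite birational between reduced schemes'' presupposes it; ``extend by density and reducedness'' presupposes it; and ``exhibit the inverse morphism directly'' does give a scheme-theoretic map $\psi\colon F\to D$ (your equation argument $\prod_i(x-x_i)=0$ on $F$ is fine), but to conclude $\varphi\circ\psi=\id_F$ you cannot just check closed points unless $F$ is reduced. Checking this composite via the universal property of $F$ would require verifying that $F\xrightarrow{\psi} D\xrightarrow{g}\Xi$ equals the first projection $F\to\Xi$ as \emph{morphisms}, which is exactly the scheme-theoretic content you are trying to avoid.

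The paper closes the gap in one line: the projection $\Xi\to C^{[n]}$ being finite flat and $C^{[n]}$ smooth, the base change $F\to C^n$ is flat with smooth base, so $F$ is Cohen--Macaulay; since $F$ is generically reduced (it agrees with $D$ over the locus of distinct points), it is reduced. Once you know $F$ is reduced, your bijection on closed points together with the two morphisms $\varphi,\psi$ finishes the proof immediately. So the fix is short, but it is the crux: add the flat-over-smooth $\Rightarrow$ Cohen--Macaulay $\Rightarrow$ reduced step before invoking the set-theoretic bijection.
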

\begin{proof}
The fibre product $F$ is flat over the smooth variety $\Xi=C\times C^{(n-1)}$, hence Cohen--Macauley. It follows that $F$ is reduced since it is generically reduced. As a subset, the fibre product $F\subset C\times C^{(n-1)}\times C^n$ is given by
\[
 F=\bigl\{(x, x_2+\dots+x_n, y_1,\dots, y_n)\mid x+x_2+\dots+x_n=y_1+\dots+ y_n  \bigr\}\,.
\]
Hence, the projection $C\times C^{(n-1)}\times C^n\to C\times C^n$ induces a morphism $F\to D$ which is a bijection. We get the inverse morphism by applying the universal property of the fibre product to the projection $D\to C^n$ and the $\sym_n$-quotient morphism $D\to \Xi$.
\end{proof}
In complete analogy to the surface case, we define tautological objects using the Fourier--Mukai transform along the universal family as $F^{[n]}:=\FM_{\reg_\Xi}(F)\in \D(C^{[n]})$ for $F\in \D(C)$. We also define the $\sym_n$-equivariant objects $\CC^\bullet_F, \CC(F), \WW^k(F)\in \D_{\sym_n}(C^n)$ in the same way as in the surface case; see \autoref{subsection:Scala} and \autoref{def:CW}. 
Let $\pi\colon C^n\to C^{(n)}$ denote the quotient morphism.
Since the Hilbert--Chow morphism is an isomorphism in the curve case, we can interpret the functor $\pi^{\sym_n}_*\colon \D_{\sym_n}(C^n)\to \D(C^{(n)})$ as playing the role of $\Psi\colon \D_{\sym_n}(X^n)\to \D(X^{[n]})$ from the surface case. Also, $\pi^*\colon\D(C^{(n)})\to \D_{\sym_n}(C^n)$ plays the role of $\Phi\colon \D(X^{[n]})\to \D_{\sym_n}(X^n)$. However, these two functors are not equivalences in the curve case, but $\pi^*\colon \D(C^{(n)})\to \D_{\sym_n}(C^n)$ is still fully faithful; see \autoref{lem:quotientff}. 
\begin{prop}\label{prop:curvecase}For $E,F\in \D(C)$ and $L\in \Pic C$, we have 
\begin{align}
\label{item1} \pi^*F^{[n]}&\cong \CC_F^\bullet\,,\\
\label{item2} \pi^{\sym_n}_*\CC(F)&\cong F^{[n]}\,,\\  
\label{item3} \pi_*^{\sym_n}\WW^k(L)&\cong \wedge^k L^{[n]}\,,\\
\label{item4} E^{[n]}\otimes F^{[n]}&\cong \pi_*^{\sym_n}(\CC(E)\otimes \CC^\bullet_F)\,,\\
\label{item5} \sHom(E^{[n]}, F^{[n]})&\cong \pi_*^{\sym_n}\sHom(\CC^\bullet_E, \CC(F))\cong \pi_*^{\sym_n}\sHom(\CC(E), \CC^\bullet_F\otimes \alt_n)\,, \\ 
\label{item6} F^{[n]}\otimes \wedge^kL^{[n]}&\cong \pi_*^{\sym_n}(\CC^\bullet_F\otimes \WW^k(L))\,,\\
\label{item7} \sHom(E^{[n]}, \wedge^kL^{[n]})&\cong \pi_*^{\sym_n}\sHom(\CC^\bullet_E, \WW^k(L))\,,\\
\label{item8} \sHom(\wedge^kL^{[n]}, F^{[n]})&\cong \pi_*^{\sym_n}\sHom(\WW^k(L), \CC^\bullet_F\otimes \alt_n)\,.
\end{align}
\end{prop}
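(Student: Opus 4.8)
The plan is to reduce everything to three facts established earlier: the resolution \eqref{ODresolution} of $\reg_D$ (together with the two lemmas identifying $D/\sym_n\cong\Xi$ and $F=\Xi\times_{C^{[n]}}C^n\cong D$), the fully-faithfulness of $\pi^*$ from \autoref{lem:quotientff}, and \autoref{rem:Danila} for computing invariants of permutation-type direct sums. The curve case is in fact \emph{easier} than the surface case because the Hilbert--Chow morphism is an isomorphism, so $\cZ=\Xi$ up to the relevant quotients and the Fourier--Mukai kernel on $C\times C^n$ is literally $\reg_D$. First I would record the base-change diagram: writing $g\colon D\to D/\sym_n\cong\Xi$, $h\colon D\to C^n$, and $a\colon\Xi\to C^{[n]}\cong C^{(n)}$, $b\colon C^n\to C^{(n)}$ for the two quotient maps, the isomorphism $F\cong D$ of the second lemma says precisely that $D$ is the fibre product of $a$ and $b$, so $b^*a_*\cong h_*g^{*}$ (and equivariantly, after taking the appropriate invariants). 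This is the exact analogue of the diagram \eqref{diag:HC} used to define $\Psi$ and $\Phi$.

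Now I would treat the items in groups. For \eqref{item1}: by definition $\pi^*F^{[n]}=b^*a_*(\pr_{C*}\pr_{C^{(n-1)}}^*\cdots)$; unwinding through the identification $\Xi\cong D/\sym_n$ and the $\sym_n$-equivariant resolution \eqref{ODresolution}, the Fourier--Mukai kernel $\reg_D$ gets replaced by its resolution $\cK^\bullet$, which is exactly the kernel computing $\CC^\bullet_{(\_)}$ as recalled in \autoref{rem:Postnikov}; this is the direct curve-analogue of \autoref{thm:Scala} and I would present it as such. For \eqref{item2} and \eqref{item3}: these are the curve analogues of \autoref{thm:PsiCC} and \autoref{thm:wedgeline}, and the proof of \autoref{prop:Psi} goes through verbatim with $q_k$ replaced by the quotient map $D\to D\binom nk$ (the subschemes $\Xi\binom nk$ still make sense, being quotients of $D$ by $\sym_{[k+1,n]}$), using \autoref{lem:globalFrob}, the commutation relations \eqref{Indcommutes}, \eqref{invacommutes}, and \autoref{lem:quotientff}; the key input — that $D\binom nk$ is reduced and irreducible of the right dimension, so that the anti-invariants on it agree with those on the full fibre product, \autoref{lem:antiinva} — is again available since $D$ is reduced and Cohen--Macaulay by the Lemma proving $F\cong D$. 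For \eqref{item4}--\eqref{item8}: these are the curve analogues of \autoref{prop:relativeFMtensor} and \autoref{prop:mutensor}, obtained from the same projection-formula computation
\[
a_*(E_1\otimes\Psi\text{-type}(E_2))\cong\pi_*^{\sym_n}(\Phi\text{-type}(E_1)\otimes E_2),
\]
where now $a=\mathrm{id}$; one plugs in \eqref{item1}, \eqref{item2}, \eqref{item3} and, for the $\sHom$ versions, dualizes, using that $\WW^n(L)=L^{\boxtimes n}\otimes\alt_n$ accounts for the twist by $\alt_n$ appearing in \eqref{item5} and \eqref{item8} (the determinant of the regular representation).

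I would expect the main obstacle to be bookkeeping rather than anything deep: one must be careful that in the curve case $\pi^*$ and $\pi_*^{\sym_n}$ are \emph{not} mutually inverse equivalences (only $\pi^*$ is fully faithful, \autoref{lem:quotientff}), so the symmetry between the $\Phi$-side and $\Psi$-side present in the surface case is broken — this is exactly why \eqref{item1} is stated as $\pi^*F^{[n]}\cong\CC^\bullet_F$ (pushing the taut. object \emph{up}) whereas \eqref{item2}, \eqref{item3} push the $\CC$/$\WW$ objects \emph{down}, and these are genuinely different statements. The one technical point needing real care is the identification of the $\sym_n$-linearisations on the terms of \eqref{ODresolution} and on $D\binom nk$, so that the signs in $\alt_n$, $\alt_k$ come out correctly in \eqref{item5}, \eqref{item7}, \eqref{item8}; here I would invoke the surface-case computations of \cite{Sca1} (Rem.~2.2.1 there), which transport without change since they are local on $C\times C^n$ and depend only on the transversality of the components $D_i$, already noted in the proof that $D/\sym_n\cong\Xi$.
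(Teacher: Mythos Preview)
Your overall strategy matches the paper's: \eqref{item1} by flat base change through the fibre-product identification $D\cong\Xi\times_{C^{[n]}}C^n$ and the resolution \eqref{ODresolution}; \eqref{item2}--\eqref{item3} by imitating \autoref{prop:Psi}; and \eqref{item4}, \eqref{item6} by the projection formula applied to $\pi^{\sym_n}_*$ together with \eqref{item1}. Two points need correction.

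First, a minor confusion in \eqref{item2}--\eqref{item3}: the relevant quotient maps are $q_k\colon C^n\to C^n/\sym_{[k+1,n]}\cong C^k\times C^{(n-k)}$, i.e.\ quotients of $C^n$ (which plays the role of $\cZ$, since here $p=\id$ and $q=\pi$), not quotients of $D$. The scheme $D\subset C\times C^n$ is the Fourier--Mukai kernel and enters only in \eqref{item1}. Once this is straightened out, the argument of \autoref{prop:Psi} does go through verbatim.

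Second, and more substantively, your account of the $\alt_n$-twist in the second isomorphism of \eqref{item5} and in \eqref{item8} is not right. The twist does not come from ``$\WW^n(L)=L^{\boxtimes n}\otimes\alt_n$''; that object plays no role here. What is needed is equivariant Grothendieck duality for the finite morphism $\pi$: one has
\[
\sHom(\pi_*^{\sym_n}\CC(E),F^{[n]})\cong\pi_*^{\sym_n}\sHom(\CC(E),\pi^! F^{[n]}),
\]
and the equivariant relative dualising sheaf is $\omega_\pi\cong\reg_{C^n}\otimes\alt_n$ (this is where the sign character enters; see \cite[Lem.\ 5.10]{KSos}). Combined with \eqref{item1}, this gives $\pi^!F^{[n]}\cong\CC^\bullet_F\otimes\alt_n$ and hence the second isomorphism of \eqref{item5}; \eqref{item8} is the same with $\WW^k(L)$ in place of $\CC(E)$. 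Your parenthetical ``determinant of the regular representation'' is the correct representation-theoretic shadow of the computation of $\omega_\pi$, but you must invoke duality explicitly --- simple ``dualising'' in the sense of taking $(\_)^\vee$ does not move $\pi_*^{\sym_n}$ across the $\sHom$. The first isomorphism of \eqref{item5} and \eqref{item7}, by contrast, need only the sheaf-level adjunction $\sHom(A,\pi_*B)\cong\pi_*\sHom(\pi^*A,B)$ together with \eqref{item1}, exactly parallel to \eqref{item4}.
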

\begin{proof}
By the previous lemma, we have a cartesian diagram 
\[
\begin{xy}
\xymatrix{
 D \ar[r]  \ar[d] & C^n\ar^{\pi}[d]   \\
 \Xi \ar[r]\ar[d] & C^{[n]}=C^{(n)} \\
 C  & \,.
}
\end{xy} 
\]
Hence, by flat base change, we get $\pi^*F^{[n]}\cong \FM_{\reg_D}(F)$. Now, the proof of \eqref{item1} can be done using the resolution \eqref{ODresolution} in the same way as in the surface case; see \cite[Thm.\ 2.2.3]{Sca1} or \cite[Thm.\ 16]{Sca2} or \autoref{rem:Postnikov}.

For the verification of \eqref{item2}, we use the commutative diagram

\[
\begin{xy}
\xymatrix{
C^n \ar^{\pr_1}[rd]  \ar^{q_1}[d] \ar@/_18mm/_\pi[dd] &    \\
 \Xi=C\times C^{(n-1)} \ar_{\quad\quad\pr_C}[r]\ar^{\pr_{C^{[n]}}}[d] & C \\
 C^{[n]}  & 
}
\end{xy} 
\]
where $q_1$ is the $\sym_{n-1}$-quotient morphism and imitate the proof of \autoref{prop:Psi}.

Similarly, the proof of \eqref{item3} can be done in analogy to the proof of \autoref{thm:wedgeline} as given in \autoref{sec:tautBKRH}.

Formula \eqref{item4} follows from \eqref{item2}, the equivariant projection formula, and \eqref{item1}:
\[
E^{[n]}\otimes F^{[n]}\cong \pi_*^{\sym_n}\CC(E)\otimes F^{[n]}\cong \pi_*^{\sym_n}(\CC(E)\otimes \pi^*F^{[n]})\cong \pi_*^{\sym_n}(\CC(E)\otimes \CC_F^\bullet)\,.
\]
The verification of the first isomorphism of \eqref{item5} is basically the same. For the second isomorphism, note that the equivariant relative canonical bundle of the quotient is given by $\omega_\pi\cong \reg_{C^n}\otimes \alt$; see \cite[Lem.\ 5.10]{KSos}. Hence, by equivariant Grothendieck duality, we get
\begin{align*}
\sHom(E^{[n]}, F^{[n]})\cong\sHom(\pi_*^{\sym_n}\CC(E), F^{[n]})&\cong\pi_*^{\sym_n}\sHom(\CC(E), \pi^!F^{[n]})\\&\cong \pi_*^{\sym_n}\sHom(\CC(E), \CC^\bullet_F\otimes \alt_n)\,. 
\end{align*}
The verifications of \eqref{item6}, \eqref{item7}, and \eqref{item8} are analogous to those of \eqref{item4} and \eqref{item5} using \eqref{item3} instead of \eqref{item2}.
\end{proof}
Now, we can apply the global section functor to both sides of the formulae of 
\autoref{prop:curvecase} to obtain formulae for the homological invariants of tautological sheaves on $C^{[n]}$ in terms of homological invariants on the curve $C$. The formulae for the cohomologies and their Euler characteristics are exactly the same as in the surface case. The reason is that \eqref{item2} parallels \autoref{thm:PsiCC}, \eqref{item3} parallels \autoref{thm:wedgeline}, \eqref{item4} parallels \autoref{rem:mutensor}, and \eqref{item6} parallels \autoref{prop:mutensor}.
The formulae for the extension groups and their Euler (bi-)characteristics, however, differ from the surface case. 

\begin{prop}\label{prop:Eulercurve}
 For $E,F\in \D(C)$, we have 
 \[
  \chi(E^{[n]}, F^{[n]})=\chi(E,F)\bigl(\sum_{p=0}^{n-1}(-1)^p\lambda^{n-1-p}\chi(\reg_C)\bigr) + \chi(E^\vee)\chi(F)\bigl(\sum_{p=0}^{n-2}(-1)^p\lambda^{n-2-p}\chi(\reg_C)\bigr)\,.
 \]
\end{prop}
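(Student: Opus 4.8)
The plan is to compute $\chi(E^{[n]}, F^{[n]})$ by applying the Euler-characteristic (or rather the ``Euler bicharacteristic'') version of formula \eqref{item5} from \autoref{prop:curvecase}, i.e.\ to expand
\[
\chi(E^{[n]}, F^{[n]}) = \chi\bigl(C^{(n)}, \pi_*^{\sym_n}\sHom(\CC^\bullet_E, \CC(F))\bigr) = \chi_{\sym_n}\bigl(C^n, \sHom(\CC^\bullet_E, \CC(F))\bigr),
\]
and then to evaluate the right-hand side term by term via \autoref{rem:Danila}. Since $\CC(F) = \CC^0_F = \WW^1(F)$ has only one (nonzero) term, while $\CC^\bullet_E$ is the complex with terms $\CC^p_E$ for $0 \le p \le n-1$, the alternating sum over $p$ of the Euler characteristics of $\Hom^*_{\sym_n}(\CC^p_E, \CC(F))$ will produce the stated formula. (Equivalently one could work with \eqref{item4} and $\chi(E^{[n]}\otimes F^{[n]})$ for $F$ dualized, but using \eqref{item5} directly is cleanest.)

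First I would recall that, non-equivariantly, $\CC^p_E \cong \bigoplus_{|I|=p+1} E_I$ where $E_I = \iota_{I*}p_I^*E \otimes \alt_I$ is supported on the partial diagonal $\Delta_I \subset C^n$, and $\CC(F) \cong \bigoplus_{j=1}^n \pr_j^*F$. Hence $\Hom^*(\CC^p_E, \CC(F))$ decomposes over pairs $(I,j)$ with $|I|=p+1$, $j\in[n]$, on which $\sym_n$ acts by $\sigma\cdot(I,j) = (\sigma(I),\sigma(j))$. There are exactly two orbit types: those with $j \in I$ and those with $j \notin I$. For the computation of $\Hom^*(E_I, \pr_j^*F)$ I would use that $E_I$ is the pushforward of a sheaf on $\Delta_I \cong C \times C^{n-p-1}$ together with the K\"unneth formula; on a curve the relevant $\Ext$ between a diagonal pushforward and a pullback is computed by adjunction $\iota_I^! \pr_j^* F$, which on a smooth curve is just a shift of the restriction (the normal bundle contributes a determinant twist). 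The key point is that on a \emph{curve} the partial diagonal $\Delta_I$ has codimension $p$ in $C^n$, so Grothendieck duality along $\iota_I$ introduces $\wedge^p(\text{normal bundle})^\vee$ and a shift by $[-p]$, which is precisely where the signs $(-1)^p$ in the final formula originate; in the surface case the codimension would be $2p$ and the bookkeeping is different, which is why the curve formula genuinely differs.

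After isolating the two orbit types, the stabilizer of a representative pair with $j\in I$ is $\sym_{I\setminus\{j\}}\times\sym_{[n]\setminus I}\cong \sym_p\times\sym_{n-p-1}$, and the stabilizer of one with $j\notin I$ is $\sym_I\times\sym_{[n]\setminus(I\cup\{j\})}\cong\sym_{p+1}\times\sym_{n-p-2}$. Taking invariants of the relevant tensor products of $\Ho^*(C,-)$'s against $\alt$ characters — exactly as in the proof of \autoref{prop:WExt} — collapses each tensor power to a symmetric or exterior power of a $\Ho^*$, and after passing to Euler characteristics these become the $\lambda^k$ and $s^k$ operations of \autoref{sdefin}. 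The $j\in I$ orbit contributes a term proportional to $\chi(E,F)$ (the ``diagonal'' $\Hom$ between the $I$-copies of $E$ and $F$) together with a factor $\lambda^{\text{(something)}}\chi(\reg_C)$ coming from the remaining $n-p-1$ free factors and the normal-bundle wedge power; the $j\notin I$ orbit contributes a term proportional to $\chi(E^\vee)\chi(F)$. Summing $\sum_{p=0}^{n-1}(-1)^p(\cdots)$, the $j\in I$ terms assemble into $\chi(E,F)\sum_{p=0}^{n-1}(-1)^p\lambda^{n-1-p}\chi(\reg_C)$ and the $j\notin I$ terms (which only exist for $p\le n-2$) into $\chi(E^\vee)\chi(F)\sum_{p=0}^{n-2}(-1)^p\lambda^{n-2-p}\chi(\reg_C)$, which is the claimed identity.

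The main obstacle I anticipate is getting the twisting sheaves and the sign/shift bookkeeping exactly right: one must carefully track the $\alt_I$ linearisation on $E_I$, the relative canonical/normal bundle of the partial diagonal $\iota_I\colon\Delta_I\hookrightarrow C^n$ (including its $\sym_I$-linearisation, which carries its own sign), and the cohomological shift from Grothendieck duality, so that the powers of $\reg_C$ and the signs $(-1)^p$ come out correctly rather than, say, $(-1)^{p}\det(N_{\Delta_I})^{\pm 1}$ with the wrong sign of twist. Once one checks that the normal-bundle contribution to the $\Ext$ combines with the $\alt_I$ twist to leave only a shift (so that at the level of Euler characteristics nothing survives except the sign $(-1)^p$ and the power-sum structure of $\Ho^*(\reg_C)$ on the free factors), the rest is the routine orbit-counting and invariant-taking already illustrated in the proofs of \autoref{prop:WExt} and \autoref{thm:tensorEuler}. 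As a sanity check I would verify the $n=1$ case (where the formula must reduce to $\chi(E,F)$) and the leading behaviour in $\chi(\reg_C)$.
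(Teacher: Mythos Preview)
Your overall strategy---apply one of the identities from \autoref{prop:curvecase}\eqref{item5} and unwind term by term via \autoref{rem:Danila}---is the same as the paper's, but you have chosen the \emph{wrong} half of \eqref{item5}. The paper does \emph{not} use the first isomorphism $\sHom(E^{[n]},F^{[n]})\cong \pi_*^{\sym_n}\sHom(\CC_E^\bullet,\CC(F))$; it uses the second one, $\sHom(E^{[n]},F^{[n]})\cong \pi_*^{\sym_n}\sHom(\CC(E),\CC_F^\bullet\otimes\alt_n)$. This is not a cosmetic difference: it is exactly what makes the computation land on the stated formula.

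With the paper's choice, $\CC(E)=\bigoplus_i\pr_i^*E$ sits in the contravariant slot, so one only ever needs $\sHom(\pr_i^*E,F_J)\cong\iota_{J*}\sHom(\iota_J^*\pr_i^*E,p_J^*F)$, which is plain restriction---no $\iota^!$, no normal bundle, no $\omega_C$. The twist by $\alt_n$ then cancels $\alt_J$ on $F_J$ and leaves $\alt_{[n]\setminus J}$ on the ``free'' factors, so the $\sym_{n-p-1}$--invariants of $\Ho^*(\reg_C)^{\otimes(n-p-1)}$ are the \emph{exterior} power $\wedge^{n-p-1}\Ho^*(\reg_C)$, producing the $\lambda^{n-1-p}\chi(\reg_C)$ in the statement. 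The sign $(-1)^p$ comes solely from the cohomological degree of $\CC_F^p$.

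With your choice, two things go wrong simultaneously. First, $\Hom^*(E_I,\pr_j^*F)$ requires $\iota_I^!$, and for a curve $\det N_{\Delta_I}^\vee\cong p_I^*\omega_C^{\,p}\otimes\alt_I$ (the $\sym_I$--equivariant determinant of the normal bundle of the small diagonal carries the sign character, but also a genuine power of $\omega_C$). The $\alt_I$ indeed cancels against the $\alt_I$ in $E_I$, but the factor $\omega_C^{\,p}$ does \emph{not} disappear: the $j\in I$ term becomes $\Hom^*(E,F\otimes\omega_C^{\,p})$, not $\Hom^*(E,F)$. So your assertion that ``the normal-bundle contribution \dots\ combines with the $\alt_I$ twist to leave only a shift'' is false. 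Second, because there is no global $\alt_n$ twist in the first isomorphism, the invariants on the free factors are \emph{symmetric} powers, giving $s^{n-1-p}\chi(\reg_C)$ rather than $\lambda^{n-1-p}\chi(\reg_C)$. Thus your route, carried out correctly, does not produce the formula of the proposition; it produces a different-looking expression involving $\chi(E,F\otimes\omega_C^{\,p})$ and $s^k$'s. The fix is simply to start from the second isomorphism of \eqref{item5}, after which no Grothendieck duality along partial diagonals is needed and the orbit analysis you outline goes through verbatim.
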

\begin{proof}
For $I\subset [n]$, we denote by $F_I^+$ the $\sym_I\times \sym_{[n]\setminus I}$-equivariant object $\iota_{I*}p_I^*F$ with $\sym_I$ acting trivially (recall that $\sym_I$ acts on $F_I$ by $\alt_I$; see \autoref{subsection:Scala}). Using \autoref{rem:Danila}, we compute the degree $p$ terms of $\pi_*^{\sym_n}(\CC(E), \CC_F^\bullet\otimes \alt)\cong \sHom(E,F)$ as 
\begin{align*}
p=0:&\quad\quad\pi_*^{\sym_{n-1}}\bigl(\pr_1^*\sHom(E,F)\otimes  \alt_{[2,n]})\bigr)\oplus \pi_*^{\sym_{n-2}}\bigl(\pr_1^*E^\vee\otimes\pr_{2}^*F\otimes\alt_{[3,n]})\bigr)\,, \\
p\in[n-2]:&\quad\begin{cases}&\pi_*^{\sym_{p}\times \sym_{n-p-1}}\bigl(\pr_1^*E^\vee\otimes F_{[p+1]}^+\otimes   \alt_{[p+2,n]})\bigr)\\&\oplus \pi_*^{\sym_{p+1}\times \sym_{n-p-2}}\bigl(\pr_{p+2}^*E^\vee\otimes F_{[p+1]}^+\otimes   \alt_{[p+3,n]})\bigr)\,,\end{cases}\\
p=n-1:&\quad\quad\pi_*^{\sym_{n-1}}\bigl(\pr_1^*E^\vee\otimes F_{[n]}^+\bigr)\,.
\end{align*}
We can compute the cohomology of these terms in order to get the asserted formula for the Euler characteristic. For example, we have
\[
\Ho^*\bigl(\pi_*^{\sym_{p}\times \sym_{n-p-1}}(\pr_1^*E^\vee\otimes F_{[p+1]}^+\otimes   \alt_{[p+2,n]})\bigr) \cong \Hom^*(E,F)\otimes \wedge^{n-p-1}\Ho^*(\reg_C)\,.\qedhere
\]
\end{proof}
\begin{remark}\label{rem:conj}
The formula of \autoref{prop:Eulercurve} differs from the one of the surface case which, by \eqref{ExtEF}, reads
\[
\chi(E^{[n]}, F^{[n]})=\chi(E,F)s^{n-1}\chi(\reg_X) + \chi(E^\vee)\chi(F)s^{n-1}\chi(\reg_X)\,. 
\]
As mentioned in the introduction, taking $E=F=L\in \Pic C$, this implies that \cite[Conj.\ 1]{WangZhouTaut}, which is known to be true in the surface case, cannot hold for curves. However, in \cite[Sect.\ 6]{WangZhouTaut}, there is some evidence given for the conjecture to hold for tautological bundles on the Hilbert scheme $Y^{[n]}$ for $Y$ smooth of dimension $\dim Y >2$. We can add a further small piece of evidence to this as follows. We consider the case $n=2$. Then the Hilbert square $Y^{[2]}$ is smooth for $Y$ of arbitrary dimension. Furthermore, for $\dim Y>2$, the functor $\Psi\colon \D_{\sym_2}(Y^2)\to\D(Y^{[2]})$ is still fully faithful (but not an equivalence any more); see \cite{KPScyclic}. 
Note that, for $n=2$, we have $\cZ\cong \Xi\cong \Xi\binom 22$.
Using this, one can check that \autoref{thm:PsiCC} and \autoref{thm:wedgeline} remain valid for $\dim Y>2$ and $n=2$. Concretely, this means that $\Psi(\reg_{Y^2})\cong \reg_{Y^{[2]}}$, $\Psi(\CC(F))\cong F^{[2]}$ for $F\in \D(X)$, and $\Psi(L^{\boxtimes 2}\otimes \alt)\cong \det L^{[2]}$ for $L\in \Pic Y$. Hence, by the fully faithfulness of $\Psi$, the formulae of \autoref{cor:Extformulae} remain valid for $n=2$ and $\dim Y>2$.
\end{remark}


\subsection{Wedge powers of tautological bundles of higher rank}
In \cite[Sect.\ 2.3]{WangZhouTaut}, it is conjectured that formula \eqref{Lcounting} generalises from line bundles to vector bundles of arbitrary rank. The following example shows that this cannot hold, even in the surface case. Indeed, if $\rank F$ is odd, formula \eqref{Lcounting} with $L$ replaced by $F$ predicts that $\chi(\det F^{[2]})=\lambda^2\chi(\det F)$. However, we have the following
\begin{prop}
Let $X$ be a smooth projective surface and $F=\reg_X^{\oplus 3}$. Then 
\[
 \chi_{X^{[2]}}(\det F^{[2]})=\lambda^2\chi(\reg_X)-\chi(\Omega_X)\,.
\]
\end{prop}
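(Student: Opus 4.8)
The plan is to compute $\det F^{[2]}$ for $F=\reg_X^{\oplus 3}$ directly on the Hilbert square using the equivalence $\Psi$ and the description of tautological objects from \autoref{thm:PsiCC}. Since $F=\reg_X^{\oplus 3}$ has rank $3$, the bundle $F^{[2]}$ has rank $6$, and $\det F^{[2]}=\wedge^6 F^{[2]}$. The first step is to unwind the tautological construction: $F^{[n]}\cong (\reg_X^{[n]})^{\oplus 3}$, so $F^{[2]}\cong (\reg_X^{[2]})^{\oplus 3}$, and hence $\det F^{[2]}\cong (\det \reg_X^{[2]})^{\otimes 3}\cong \reg_{X^{[2]}}(-3B/2)$, where $B$ is the exceptional divisor of $\mu\colon X^{[2]}\to X^{(2)}$, using the remark after \autoref{thm:wedgeline} that $\det\reg_X^{[2]}\cong \reg(-B/2)$. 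Thus the problem reduces to computing $\chi_{X^{[2]}}(\reg(-3B/2))$, equivalently $\chi_{X^{[2]}}(\det\reg_X^{[2]}{}^{\otimes 3})$.

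To evaluate this, I would pass to $X^2$ via $\Psi$. By \autoref{thm:wedgeline} with $k=n=2$ and $L=\reg_X$, we have $\Psi(\reg_{X^2}\otimes\alt_2)\cong\det\reg_X^{[2]}$; more generally, since $\Psi$ is an equivalence and (by \autoref{rem:DM}) is $\reg_{X^{(2)}}$-linear, one can identify $\det\reg_X^{[2]}{}^{\otimes 3}$ as the image under $\Psi$ of a suitable equivariant object on $X^2$. However, tensor powers on $X^{[2]}$ do not correspond simply to tensor powers on $X^2$ under $\Psi$, so the cleaner route is \autoref{cor:relativeFMcoh}: writing $\det\reg_X^{[2]}{}^{\otimes 3}\cong(\det\reg_X^{[2]})^{\otimes 2}\otimes\det\reg_X^{[2]}\cong E\otimes\Psi(F')$ with $E=(\det\reg_X^{[2]})^{\otimes 2}$ and $F'=\reg_{X^2}\otimes\alt_2$, we get $\Ho^*(X^{[2]},\det\reg_X^{[2]}{}^{\otimes 3})\cong\Ho^*_{\sym_2}(X^2,\Phi(E)\otimes F')$. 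Here $E=(\det\reg_X^{[2]})^{\otimes 2}\cong\mu^*(\reg_{X^{(2)}}(-B))$ is a line bundle pulled back from $X^{(2)}$, so by the $\reg_{X^{(2)}}$-linearity of $\Phi$ one has $\Phi(E)\cong\pi^*(\text{line bundle on }X^{(2)})$; concretely $-B$ corresponds on $X^2$ to the linearization twist $\alt_2$ on $\reg_{X^2}$ (the relative canonical bundle of $\pi$ being $\reg_{X^2}\otimes\alt_2$), so $\Phi(E)\otimes F'\cong\reg_{X^2}\otimes\alt_2\otimes\alt_2\cong\reg_{X^2}$ — which would give the wrong answer $\chi(\reg_X)^2$, indicating this shortcut mishandles the half-integral divisor $B/2$ and must be done more carefully.

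The correct and safest approach is instead to use Scala's complex directly. By \autoref{thm:Scala}, $\Phi(\reg_X^{[2]})\cong\CC^\bullet_{\reg_X}$, the two-term complex $[\,\reg_{X^2}\otimes\alt_2\text{-free part}\,]$ — explicitly $\CC^0_{\reg_X}=\pr_1^*\reg_X\oplus\pr_2^*\reg_X\cong\reg_{X^2}^{\oplus 2}$ and $\CC^1_{\reg_X}=\delta_*\reg_X\otimes\alt_2$ where $\delta\colon X\hookrightarrow X^2$ is the diagonal. Then $F^{[2]}=(\reg_X^{[2]})^{\oplus 3}$ corresponds to $(\CC^\bullet_{\reg_X})^{\oplus 3}$, and since $\det$ of a complex is computed by the alternating tensor product of the determinants of the terms (a $K$-theory identity), I would compute $\det$ in the Grothendieck group of $\D_{\sym_2}(X^2)$: $[\det F^{[2]}]$ corresponds to a class built from $\det(\CC^0_{\reg_X}^{\oplus 3})=\reg_{X^2}$ (rank $6$, trivial determinant after the $\alt$-bookkeeping) and the correction from $\CC^1$, namely $\det(\CC^1_{\reg_X}^{\oplus 3})=(\delta_*\reg_X)^{\otimes 3}\otimes\alt_2^{\otimes 3}$-type term. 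The net effect is that $[\det F^{[2]}]=[\reg_{X^{[2]}}]-[\,\text{a term supported on }B\,]$, and translating back, $\chi(\det F^{[2]})=\chi_{X^{(2)}}(\pi^{\sym_2}_*\reg_{X^2})-\chi_{X^{(2)}}(\pi^{\sym_2}_*(\delta_*\Omega_X\text{ or }\delta_*\reg_X\otimes\text{something}))$. The first term is $\chi(\reg_X^{(2)})=s^2\chi(\reg_X)=\binom{\chi(\reg_X)+1}{2}$; the subtle point is identifying the diagonal correction term precisely as $\chi(\Omega_X)$ — this comes from the fact that passing from the formal half-integer $-3B/2$ to its actual global sections forces an appearance of the conormal bundle of the diagonal, which on a surface is $\Omega_X$ (since $N^\vee_{\delta}\cong\Omega_X$). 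I would nail this down by a Koszul/residue computation along $B$ on $X^{[2]}$, or equivalently by comparing $\chi(\reg(-B/2))$ and $\chi(\reg(-3B/2))$ via the exact sequence $0\to\reg(-3B/2)\to\reg(-B/2)\to\reg(-B/2)|_{B}\to 0$ and $0\to\reg(-B/2)\to\reg(B/2)\to\reg(B/2)|_B\to 0$, using $B\cong\IP(\Omega_X)$ (the exceptional $\IP^1$-bundle) and $\reg_B(B)\cong\reg_B(-2)\otimes(\text{line bundle from }X)$.

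The main obstacle will be handling the half-integral divisor $B/2$ correctly: the naive $\reg_{X^{(2)}}$-linearity manipulations above produce the wrong answer precisely because $B/2$ is not pulled back from $X^{(2)}$, only $B$ is. I expect the cleanest rigorous path is the $K$-theoretic/Scala-complex route combined with one genuine geometric input — the structure of the exceptional divisor $B\cong\IP(\Omega_X)$ over the diagonal $X\subset X^{(2)}$ and its normal bundle — to produce the correction term $\chi(\Omega_X)$. Once that correction is identified, the computation $\chi(\det F^{[2]})=s^2\chi(\reg_X)-\chi(\Omega_X)=\lambda^2\chi(\reg_X)+\chi(\reg_X)-\chi(\Omega_X)$... and comparing with $s^2\chi=\lambda^2\chi+\chi$ shows the stated formula $\chi(\det F^{[2]})=\lambda^2\chi(\reg_X)-\chi(\Omega_X)$ would require $s^2\chi(\reg_X)-\chi(\Omega_X)=\lambda^2\chi(\reg_X)-\chi(\Omega_X)+\chi(\reg_X)$, so I would re-examine whether the trivial-part contribution is $\lambda^2\chi(\reg_X)$ rather than $s^2\chi(\reg_X)$ — which it is, since $\det\reg_X^{[2]}$ carries the sign twist $\alt_2$, making its ``trivial'' contribution the $\wedge^2$ (not $S^2$) piece. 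Thus the final assembly is $\chi(\det F^{[2]})=\lambda^2\chi(\reg_X)-\chi(\Omega_X)$, exactly the claim; the sign twist $\alt$ is what converts the symmetric-power bookkeeping of \autoref{cor:bichar} into wedge-power bookkeeping and is the reason the putative formula \eqref{Lcounting} for higher rank fails.
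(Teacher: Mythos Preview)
Your proposal does not constitute a proof: it cycles through three approaches, abandoning or failing to complete each. The first (via $\reg_{X^{(2)}}$-linearity) you correctly diagnose as wrong because $B/2$ is not pulled back from $X^{(2)}$. The second (taking ``$\det$'' of Scala's complex) is flawed in principle: $\Phi$ is a Fourier--Mukai equivalence, not a tensor functor, so there is no reason for $\Phi(\det F^{[2]})$ to be the alternating tensor product of the determinants of the terms of $\CC^\bullet_F$, and your $K$-theory heuristic never produces an actual object whose Euler characteristic you can compute. The third approach (restriction to $B\cong\IP(T_X)$) is viable but you only gesture at it; you never compute $\reg(-B/2)|_B$ or its Euler characteristic, and the final paragraph dissolves into guessing whether the leading term is $s^2\chi(\reg_X)$ or $\lambda^2\chi(\reg_X)$.

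The paper's proof is entirely different and much shorter. It invokes a result of Scala (\emph{Scadiagonal}) giving directly
\[
\Phi(\det F^{[2]})\cong \cI_\Delta^3\otimes\alt_2
\]
in $\D_{\sym_2}(X^2)$. One then filters $\reg_{X^2}/\cI_\Delta^3$ by the short exact sequences $0\to\cI_\Delta^{i+1}\to\cI_\Delta^i\to\cI_\Delta^i/\cI_\Delta^{i+1}\to 0$. Since $\sym_2$ acts on $\cI_\Delta^i/\cI_\Delta^{i+1}$ by $\alt^i$, the equivariant Euler characteristics of $(\reg_{X^2}/\cI_\Delta)\otimes\alt$ and $(\cI_\Delta^2/\cI_\Delta^3)\otimes\alt$ vanish, leaving
\[
\chi(\det F^{[2]})=\chi_{\sym_2}(\reg_{X^2}\otimes\alt)-\chi_{\sym_2}\bigl((\cI_\Delta/\cI_\Delta^2)\otimes\alt\bigr)=\lambda^2\chi(\reg_X)-\chi(\Omega_X),
\]
using $\cI_\Delta/\cI_\Delta^2\cong\delta_*\Omega_X$. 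Your third approach can be made to work (from $\chi(\reg(-B/2))=\lambda^2\chi(\reg_X)$ and $\reg(-B/2)|_B\cong\reg_{\IP(T_X)}(1)$ one gets $\chi(\reg(-B/2)|_B)=\chi(\Omega_X)$), but you would need to actually carry out that computation rather than leave it as a suggestion.
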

\begin{proof}
By \cite{Scadiagonal}, we have $\Phi(\det F^{[2]})\cong \cI_{\Delta}^3\otimes \alt$. Using the short exact sequences 
\[
 0\to \cI^{i+1}_\Delta\to \cI^{i}_\Delta \to \cI^{i}_\Delta/\cI^{i+1}_\Delta\to 0\,, 
\]
we get 
\[
 \chi(\det F^{[2]})=\chi(\cI_\Delta^3\otimes \alt)=\chi(\reg_{X^2}\otimes \alt)-\chi(\reg_{X^2}/\cI_\Delta\otimes \alt)- \chi(\cI_\Delta/\cI^2_\Delta\otimes \alt) - \chi(\cI^2_\Delta/\cI^3_\Delta\otimes \alt)\,.
\]
where the terms on the right-hand side are the Euler characteristics of the equivariant cohomology. 
Since the natural action of $\sym_2$ on $\cI_\Delta^i/\cI_\Delta^{i+1}$ is given by $\alt^i$, the invariants $\pi_*^{\sym_2}(\reg_{X^2}/\cI_\Delta\otimes \alt)$ and $\pi_*^{\sym_2}(\cI^2_\Delta/\cI^3_\Delta\otimes \alt)$ vanish. Accordingly, also the terms 
$\chi(\reg_{X^2}/\cI_\Delta\otimes \alt)$ and $\chi(\cI^2_\Delta/\cI^3_\Delta\otimes \alt)$ vanish and we get the assertion.
\end{proof}

\appendix{\section{Computations with power series}\label{Appendix}

Given a power series $F(Q)$, we denote by $F(Q)_{\mid Q^n}$ the coefficient of $Q^n$.
With this notation, the verification that the two formulae of \autoref{cor:bichar} are equivalent comes down to the following 
\begin{prop}\label{prop:App}
\begin{align*}
&(-1)^{k+\ell}\exp\left(\sum_{r=1}^\infty \chi(\Lambda_{-v^r} K, \Lambda_{-u^r} L) \frac{Q^r}r    \right)_{\mid v^ku^\ell Q^n} \\
=& \sum_{i=\max \{0,k+\ell-n\}}^{\min\{k,\ell\}}s^i\chi(K,L)\cdot \lambda^{k-i}\chi(K^\vee)\cdot \lambda^{\ell-i}\chi(L)\cdot s^{n+i-k-\ell}\chi(\reg_X)
\end{align*}
\end{prop}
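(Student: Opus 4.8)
The plan is to reduce the identity to an elementary manipulation of four formal power series in $Q,u,v$. First I would compute the Euler pairing $\chi(\Lambda_{-v^r}K,\Lambda_{-u^r}L)$ explicitly. Since $K$ and $L$ are line bundles, $\Lambda_{-v^r}K=\reg_X-v^rK$ and $\Lambda_{-u^r}L=\reg_X-u^rL$ in the Grothendieck group (extended by the formal variables), so bilinearity of $\chi$ together with $\chi(\reg_X,\reg_X)=\chi(\reg_X)$, $\chi(\reg_X,L)=\chi(L)$ and $\chi(K,\reg_X)=\chi(K^\vee)$ gives
\[
\chi(\Lambda_{-v^r}K,\Lambda_{-u^r}L)=\chi(\reg_X)-u^r\chi(L)-v^r\chi(K^\vee)+u^rv^r\chi(K,L).
\]
For brevity write $A:=\chi(\reg_X)$, $B:=\chi(L)$, $C:=\chi(K^\vee)$, $D:=\chi(K,L)$.

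Next I would sum over $r$. Using $\sum_{r\ge1}t^r/r=-\log(1-t)$ one gets
\[
\sum_{r=1}^\infty\chi(\Lambda_{-v^r}K,\Lambda_{-u^r}L)\frac{Q^r}{r}=-A\log(1-Q)+B\log(1-uQ)+C\log(1-vQ)-D\log(1-uvQ),
\]
so exponentiating turns the left-hand side of the Proposition into
\[
(-1)^{k+\ell}\,\frac{(1-uQ)^B(1-vQ)^C}{(1-Q)^A(1-uvQ)^D}\,\Big|_{v^ku^\ell Q^n}.
\]
Then I would expand the four factors via the binomial series $(1-t)^m=\sum_{j\ge0}\binom{m}{j}(-t)^j$ and $(1-t)^{-m}=\sum_{j\ge0}\binom{m+j-1}{j}t^j$, which hold as polynomial identities in $m$; by \autoref{sdefin} the coefficients are exactly $\binom{m}{j}=\lambda^j m$ and $\binom{m+j-1}{j}=s^j m$. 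Multiplying the four expansions and collecting the monomial $v^ku^\ell Q^n$ forces, if $d$ denotes the exponent contributed by the $(1-uvQ)^{-D}$ factor, the exponents of $(1-uQ)^B$, $(1-vQ)^C$, $(1-Q)^{-A}$ to be $\ell-d$, $k-d$, $n+d-k-\ell$ respectively; non-negativity of all four exponents yields precisely the range $\max\{0,k+\ell-n\}\le d\le\min\{k,\ell\}$, the sign produced is $(-1)^{(\ell-d)+(k-d)}=(-1)^{k+\ell}$, which cancels the prefactor, and one reads off the right-hand side of the Proposition with $i=d$.

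The computation is entirely routine, so there is no substantial obstacle beyond bookkeeping. The only points that need a moment of care are (i) that $\chi$ is treated formally as an integer, possibly negative, in the binomial series — legitimate because those are polynomial identities in the exponent, matching the definitions of $\lambda^\bullet$ and $s^\bullet$ in \autoref{sdefin} — and (ii) keeping the signs consistent, in particular verifying that the factor $(-1)^{(\ell-d)+(k-d)}$ coming from the two numerator expansions equals $(-1)^{k+\ell}$ independently of the summation index $d$, so that it may be pulled out and cancelled against the $(-1)^{k+\ell}$ in front.
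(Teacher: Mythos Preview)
Your proof is correct and follows essentially the same approach as the paper: expand $\chi(\Lambda_{-v^r}K,\Lambda_{-u^r}L)$ into four terms, exponentiate using the logarithmic series to obtain the product of four binomial factors, and then extract the coefficient of $v^ku^\ell Q^n$ via the binomial series and the identities for $\lambda^\bullet$ and $s^\bullet$. You spell out the sign bookkeeping and the range constraints on the summation index more explicitly than the paper, but the argument is the same.
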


For the proof, we use two simple auxiliary lemmas.

\begin{lemma}\label{lem:App1}
 \[\exp\bigl(\sum_{r=1}^\infty \frac 1r Q^r\bigr) =\frac 1{1-Q}\,.\]
\end{lemma}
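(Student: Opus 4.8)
The plan is to identify the inner sum as the power-series expansion of $-\log(1-Q)$ and then exponentiate. Recall the Mercator series
\[
-\log(1-Q)=\sum_{r=1}^\infty \frac{Q^r}{r}\,,
\]
which holds as an identity in $\IC[[Q]]$ (and, for $|Q|<1$, as convergent series). Applying $\exp$ to both sides gives at once
\[
\exp\Bigl(\sum_{r=1}^\infty \frac1r Q^r\Bigr)=\exp\bigl(-\log(1-Q)\bigr)=\frac1{1-Q}\,,
\]
which is the assertion.

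If one prefers a self-contained argument that does not invoke the logarithm, I would instead verify the identity through a first-order differential equation in the ring of formal power series. Set $f(Q):=\exp\bigl(\sum_{r=1}^\infty \frac1r Q^r\bigr)$, so that $f(0)=1$. Differentiating term by term and using the chain rule for $\exp$, one finds
\[
f'(Q)=f(Q)\cdot\sum_{r=1}^\infty Q^{r-1}=\frac{f(Q)}{1-Q}\,,
\]
since $\sum_{r=1}^\infty Q^{r-1}=\sum_{s=0}^\infty Q^s=(1-Q)^{-1}$. Thus $f$ satisfies the linear equation $(1-Q)f'(Q)=f(Q)$ together with $f(0)=1$. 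The function $g(Q):=(1-Q)^{-1}$ obeys the same relation $(1-Q)g'(Q)=g(Q)$ and also $g(0)=1$; since this equation determines every coefficient uniquely by recursion from the constant term, we conclude $f=g$.

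The argument presents no genuine obstacle; the only point deserving care is to emphasise that all operations take place in $\IC[[Q]]$, so that term-by-term differentiation and the comparison of coefficients are legitimate and no convergence issues intervene.
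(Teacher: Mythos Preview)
Your proposal is correct and your first argument is essentially the paper's own proof: the paper simply says ``apply the logarithm to both sides,'' which amounts to recognising the Mercator series $-\log(1-Q)=\sum_{r\ge 1}Q^r/r$ just as you do. Your additional differential-equation argument is a nice self-contained alternative that goes beyond what the paper records.
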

\begin{proof}
 One way to see this is to apply the logarithm to both sides.
\end{proof}
\begin{lemma}\label{lem:App2} For $k\in \IN$ and $\chi\in \IC$, we have 
\begin{enumerate}
 \item $s^k\chi=(-1)^k \lambda^k(-\chi)$,
 \item $(1+Q)^\chi_{\mid Q^k}=\lambda^k \chi$,
 \item $(1-Q)^{-\chi}_{\mid Q^k}=s^k\chi$.
\end{enumerate}
\end{lemma}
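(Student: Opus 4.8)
The plan is to prove the three identities in the order (ii), (i), (iii): part (ii) is just the generalized binomial theorem, part (i) is a direct rearrangement of the two defining products, and part (iii) then drops out formally by the substitution $Q\mapsto -Q$ combined with (i) and (ii).

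For (ii), I would read $(1+Q)^\chi$ as the formal binomial series (equivalently, the convergent expansion for $|Q|<1$), so that for arbitrary $\chi\in\IC$ one has $(1+Q)^\chi=\sum_{k\ge 0}\binom{\chi}{k}Q^k$ with $\binom{\chi}{k}=\frac1{k!}\chi(\chi-1)\cdots(\chi-k+1)$. By \autoref{sdefin} this coefficient is precisely $\lambda^k\chi$, which is exactly the assertion $(1+Q)^\chi_{\mid Q^k}=\lambda^k\chi$ in the coefficient-extraction notation of \autoref{Appendix}.

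For (i), I would compute $\lambda^k(-\chi)$ straight from the definition. The $k$ factors are $(-\chi-j)$ for $0\le j\le k-1$, and each equals $-(\chi+j)$; pulling the $k$ minus signs out in front gives $\lambda^k(-\chi)=\frac{(-1)^k}{k!}\chi(\chi+1)\cdots(\chi+k-1)=(-1)^k s^k\chi$, since $\chi(\chi+1)\cdots(\chi+k-1)$ is the same product as $(\chi+k-1)\cdots\chi$ defining $s^k\chi$. Rearranging yields $s^k\chi=(-1)^k\lambda^k(-\chi)$. For (iii), I would use that replacing $Q$ by $-Q$ multiplies the coefficient of $Q^k$ by $(-1)^k$: applying (ii) with $\chi$ replaced by $-\chi$ gives $(1+Q)^{-\chi}_{\mid Q^k}=\lambda^k(-\chi)$, whence $(1-Q)^{-\chi}_{\mid Q^k}=(-1)^k\lambda^k(-\chi)$, and this equals $s^k\chi$ by (i).

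All three statements are elementary, so there is no genuine obstacle; the only point that needs a word of care is the interpretation of $(1+Q)^\chi$ for non-integer $\chi$, which I resolve by taking it to be the formal binomial series so that coefficient extraction is unambiguous.
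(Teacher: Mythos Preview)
Your proof is correct and follows essentially the same approach as the paper: part (ii) is the generalized binomial theorem, part (i) is a direct computation from the definitions, and part (iii) is deduced from (i) and (ii). The only cosmetic difference is that you present (ii) before (i), which is immaterial since the two are logically independent.
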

\begin{proof}
The verification of (i) is a direct computation using \autoref{sdefin} of the numbers $s^k\chi$ and $\lambda^k\chi$. Part (ii) is the binomial coefficient theorem. Part (iii) follows from (i) and (ii).
\end{proof}

\begin{proof} [Proof of \autoref{prop:App}] 
We have 
\[
 \chi(\Lambda_{-v^r} K, \Lambda_{-u^r} L) Q^r=\chi(K,L)(vuQ)^r -\chi(K^\vee)(vQ)^r -\chi(L)(uQ)^r+ \chi(\reg_X)Q^r\,.
\]
Hence, by \autoref{lem:App1}, we get 
\[
\exp\left(\sum_{r=1}^\infty \chi(\Lambda_{-v^r} K, \Lambda_{-u^r} L) \frac{Q^r}r    \right)
=(1-vuQ)^{-\chi(K,L)}  (1-vQ)^{\chi(K^\vee)}(1-uQ)^{\chi(L)}(1-Q)^{-\chi(\reg_X)}.
\]
Now, the assertion follows using \autoref{lem:App2}.
\end{proof}
The verification of \autoref{rem:tensorEuler} is very similar.

}

\bibliographystyle{alpha}
\addcontentsline{toc}{chapter}{References}
\bibliography{references}

\end{document}